\documentclass[sn-mathphys,Numbered]{sn-jnl}
\usepackage{graphicx}
\usepackage{multirow}
\usepackage{amsmath,amssymb,amsfonts}
\usepackage{amsthm}
\usepackage{mathrsfs}
\usepackage{mathtools}
\usepackage[title]{appendix}
\usepackage{xcolor}
\usepackage{textcomp}
\usepackage{manyfoot}
\usepackage{booktabs}
\usepackage{siunitx}
\usepackage{subcaption}
\usepackage{float}
\usepackage{tikz}
\usetikzlibrary{arrows,shapes,patterns,positioning,calc,arrows.meta}
\usepackage{pgfplots}

\theoremstyle{thmstyleone}
\newtheorem{theorem}{Theorem}
\newtheorem{corollary}{Corollary}
\newtheorem{proposition}[theorem]{Proposition}
\newtheorem{lemma}[theorem]{Lemma}

\theoremstyle{thmstyletwo}

\theoremstyle{thmstylethree}

\newtheorem{remark}{Remark}

\begin{document}

\title[Sustained Limit Cycles in the Logistic Two-Gene Oscillator]{Sustained Limit Cycles in the Logistic Two-Gene Genetic Oscillator: A Delay-Driven Hopf Bifurcation}

\author*[1]{\fnm{Ismail} \sur{Belgacem}}\email{ismail.belgacem.81@gmail.com}

\affil*[1]{\orgdiv{Independent Researcher},
\orgaddress{\street{Mezaourou}, \city{Ghazaouet, Tlemcen}, \postcode{13421}, \country{Algeria}}}

\abstract{The logistic two-gene negative-feedback oscillator is locally
asymptotically stable for all biologically meaningful parameter values,
as established in~\cite{belgacem2026logistic}, since the trace of the Jacobian
is uniformly negative. Real biological oscillators (circadian rhythms,
the segmentation clock, Hes1, p53) nevertheless rely on delays. We
extend the logistic two-gene model to a delay-differential system with
transcriptional delays $\tau_1$ and $\tau_2$, and prove that the
equilibrium loses stability through a Hopf bifurcation as the total
delay $\tau=\tau_1+\tau_2$ crosses an explicit critical value
$\tau_c$. The Hopf frequency $\omega_c$ and $\tau_c$ are computed in
closed form from the logistic derivatives at equilibrium; the
loop-gain condition $AB>\gamma_1\gamma_2$ is necessary and sufficient;
the transversality $\mathrm{Re}(d\mu/d\tau)|_{\tau_c}>0$ admits a
parameter-uniform positive lower bound; and the bifurcation persists
globally. A sum-of-delays symmetry reduces the
analysis to the scalar parameter $\tau$. Numerical simulations in R
(\texttt{deSolve}), confirm three regimes (damped, small limit cycle,
relaxation), the supercritical amplitude scaling
$A\sim c\sqrt{\tau-\tau_c}$, and the deep-relaxation period asymptote
$T\sim 2\tau+C_\infty$ with closed-form offset $C_\infty$. For the
symmetric-threshold loop, supercriticality is proved analytically by a
Lindstedt--Poincar\'e reduction yielding closed-form amplitude and
frequency laws; for the general asymmetric loop the same reduction
delivers a closed-form first Lyapunov coefficient and an explicit
criticality criterion. Calibrated to measured p53--Mdm2 parameters the
closed-form Hopf period matches the observed oscillation to within
$3\%$, and the Hopf onset agrees with the standard Hill-function model
to within a few percent. The analysis extends to cyclic $N$-gene loops, with a closed-form
transversality rate valid for every $N$ and---in the symmetric
case---an explicit delay-induced-Hopf window
$\gamma^N<\Lambda<\gamma^N\sec^N(\pi/N)$. Every step --- equilibrium, characteristic
equation, imaginary-axis crossing, transversality, period offset ---
is computed in closed form, a tractability unavailable in the Hill
formulation.}

\keywords{Delay differential equations, Hopf bifurcation,
Gene regulatory networks, Logistic functions, Biological oscillators}

%\pacs[MSC Classification]{34K18, 34K20, 37N25, 92C42, 92B25}

\maketitle

\section{Introduction}
\label{sec:intro}

In the companion paper~\cite{belgacem2026logistic}, the two-gene
negative-feedback oscillator with logistic regulatory functions was shown to
be locally asymptotically stable at its unique equilibrium, with a damped
oscillatory approach when the discriminant of the Jacobian's characteristic
polynomial is negative. The stability result is structural rather than
parameter-dependent: the trace of the Jacobian, equal to
$-(\gamma_1+\gamma_2)$, is strictly negative for all positive degradation
rates, and no choice of the steepness $\lambda$, of the production rates
$\kappa_i$, or of the thresholds $\theta_i$ can force the trace to vanish.
Consequently, the two-dimensional delay-free ODE system cannot undergo a
Hopf bifurcation, and sustained limit cycles cannot emerge from this
framework alone.

This is in apparent tension with biological experience. Many of the most
prominent regulatory oscillators sustain coherent rhythms over hundreds or
thousands of cycles. Circadian clocks~\cite{hardin1990feedback,dunlap2004chronobiology},
the somite segmentation clock that paces vertebrate
embryogenesis~\cite{lewis2003auto,hirata2002oscillatory}, the Hes1 ultradian
oscillator~\cite{hirata2002oscillatory,monk2003oscillatory}, the NF-$\kappa$B
inflammatory response~\cite{hoffmann2002ikappab,nelson2004oscillations}, and
the p53 DNA-damage response~\cite{gevazatorsky2006oscillations} all generate persistent
oscillations whose periods are dictated, both empirically and mechanistically,
by transcription, splicing, translation, and nucleocytoplasmic transport
delays. Mackey and Glass~\cite{mackey1977oscillation} demonstrated decades
ago that delays alone can drive a single first-order regulatory equation into
sustained oscillations and chaos; Glass and Mackey's text~\cite{glass1988rhythms}
documents the broad biological significance of delay-induced rhythms.

The mathematical mechanism is well known. Adding a delay $\tau$ to the
feedback path of an asymptotically stable ODE replaces the polynomial
characteristic equation by a transcendental quasi-polynomial of the form
$P(\mu) + Q(\mu)e^{-\mu\tau}=0$, which has infinitely many roots. As $\tau$
varies, complex-conjugate pairs of roots can cross the imaginary axis,
producing Hopf bifurcations and, beyond the first such crossing, sustained
limit cycles. The general theory was developed in the
1980s~\cite{cooke1986zeroes,kuang1993delay,smith2011introduction,hale1993introduction},
and Beretta and Kuang~\cite{beretta2002geometric} provided a clean geometric
criterion that we exploit below.

The present paper applies this framework to the logistic two-gene oscillator
of the companion papers~\cite{belgacem2025exploring, belgacem2026logistic}. We add a transcriptional delay $\tau_1$ on the
activator branch and a transcriptional delay $\tau_2$ on the repressor
branch, obtaining the delay-differential system
\begin{equation}
\begin{aligned}
\dot{x}_1(t) &= \kappa_1\,
  f^-\!\bigl(x_2(t-\tau_2),\theta_2,\lambda\bigr) - \gamma_1 x_1(t),\\[3pt]
\dot{x}_2(t) &= \kappa_2\,
  f^+\!\bigl(x_1(t-\tau_1),\theta_1,\lambda\bigr) - \gamma_2 x_2(t),
\end{aligned}
\label{eq:dde}
\end{equation}
where $f^+(x,\theta,\lambda) = 1/(1+e^{-\lambda(x-\theta)})$ and
$f^-(x,\theta,\lambda) = 1/(1+e^{\lambda(x-\theta)})$. The biological
interpretation is direct: $\tau_1$ aggregates the transcription, splicing,
translation, and folding lag from the activator $x_1$ to the repressor $x_2$,
and symmetrically for $\tau_2$. The reduction of explicit transcription
and translation compartments to a single effective delay on each branch
is justified rigorously by the singular-perturbation analysis
of~\cite{belgacem2018reduction,belgacem2014stability,belgacem2013analysis},
where the fast mRNA dynamics are reduced out under quasi-steady-state
assumptions standard in gene-regulatory modelling. Because both delays
appear in the same feedback loop, the linearised characteristic equation
depends only on their sum $\tau=\tau_1+\tau_2$.

The paper proves the following.
\begin{enumerate}
  \item The unique equilibrium $(x_1^*,x_2^*)$ of the delay-free system is
        unaltered by the addition of constant delays
        (Section~\ref{sec:formulation}).
  \item The linearised system at the equilibrium has characteristic equation
        $(\mu+\gamma_1)(\mu+\gamma_2) + AB\,e^{-\mu\tau} = 0$, with $A,B>0$
        computed in closed form from the logistic derivatives at equilibrium
        (Section~\ref{sec:linearization}).
  \item Purely imaginary roots $\mu=\pm i\omega_c$ exist if and only if the
        loop gain satisfies $AB>\gamma_1\gamma_2$, in which case
        $\omega_c$ is the unique positive square root of an explicit
        quadratic in $\omega^2$
        (Section~\ref{sec:hopf}).
  \item The smallest positive delay at which such roots exist is
        \[
          \tau_c
          = \frac{1}{\omega_c}\,
            \operatorname{atan2}\!\bigl(\omega_c(\gamma_1+\gamma_2),\,
                                         \omega_c^2-\gamma_1\gamma_2\bigr),
        \]
        where $\operatorname{atan2}(y,x)\in(0,2\pi)$ denotes the
        four-quadrant inverse tangent (which reduces to the principal
        $\arctan(y/x)$ when $x>0$, i.e.\ when $\omega_c^2>\gamma_1\gamma_2$),
        and the eigenvalue crosses the imaginary axis transversally,
        $\mathrm{Re}(d\mu/d\tau)|_{\tau=\tau_c}>0$, so a Hopf bifurcation
        occurs (Section~\ref{sec:transversality}). A
        parameter-uniform positive lower bound on the transversality
        gives quantitative robustness of the bifurcation throughout the
        strong-feedback regime.
  \item For $\tau>\tau_c$ the equilibrium is unstable and a limit cycle
        is born; near onset the squared amplitude in the $x_i$-component
        scales as
        $A_i^{2}(\tau)\sim
        -4|q_i|^{2}\,\mathrm{Re}(d\mu/d\tau)|_{\tau_c}(\tau-\tau_c)/
        \mathrm{Re}(c_1)$, with closed-form eigenvector amplitude
        $|q_1|^{2}=B^{2}/(\omega_c^{2}+\gamma_1^{2})$ and $|q_2|^{2}=1$
        (supercritical Hopf, Section~\ref{sec:numerical}). The first
        Lyapunov coefficient is extracted in
        component-independent form to give
        $\mathrm{Re}(c_1)\approx -1.27$ for the canonical parameters,
        confirming supercriticality
        (Section~\ref{sec:l1_numerics}). For the symmetric-threshold
        loop $\theta_i=M_i/2$ supercriticality is moreover
        \emph{proved} in closed form by a Lindstedt--Poincar\'e
        reduction (Theorem~\ref{thm:supercritical}), which yields the
        explicit amplitude law $A_1=(4\omega_c/\lambda)
        [S_1/((1+|q_2|^2)(S_2+\tau_c))]^{1/2}\sqrt{\tau-\tau_c}$ and a
        matching frequency law, valid for every parameter choice in the
        strong-feedback region. For the \emph{general asymmetric} loop
        the same reduction is carried through to a closed-form first
        Lyapunov coefficient: the order-$\varepsilon^3$ solvability
        condition collapses to a single complex equation whose real and
        imaginary parts fix the criticality coefficient $\mathcal{T}$ and
        the frequency correction $\Omega_2$, with $\mathcal{T}>0$
        characterising supercriticality (Theorem~\ref{thm:general_lyapunov}). The
        prediction is validated externally: calibrated to measured
        p53--Mdm2 degradation rates and transcriptional delay, the
        closed-form Hopf period reproduces the observed $\sim5.5$~h
        oscillation to within $3\%$, and the Hopf locus agrees with the
        standard Hill-function model to within a few percent
        (Section~\ref{sec:bio_validation}).
  \item The deep-relaxation period asymptote
        $T(\tau)=2\tau+C_\infty+o(1)$ holds with the explicit offset
        $C_\infty=\sum_i\gamma_i^{-1}\ln[M_i^2/(\theta_i(M_i-\theta_i))]$
        in closed form, derived from a four-phase decomposition of the
        relaxation cycle (Proposition~\ref{prop:Cinfty}).
  \item The entire analysis extends to cyclic $N$-gene logistic loops
        of arbitrary length: the characteristic equation factors as
        $\prod_{i=1}^N(\mu+\gamma_i)+\Lambda\,e^{-\mu\tau}=0$ with
        $\tau=\sum_i\tau_i$ the total loop delay and
        $\Lambda=\prod_i A_i$ the total loop gain, regardless of the
        distribution of repressors and activators along the cycle.
        The loop-gain criterion $\Lambda>\prod_i\gamma_i$ is necessary
        for purely imaginary roots, and the imaginary-axis crossing is
        transversal at every critical delay, with the closed-form
        positive rate $\mathrm{Re}(d\mu/d\tau)=S_1/(S_1^2+\omega_c^{-2}
        (S_2+\tau_c)^2)$ derived for general $N$. The criterion is
        \emph{sufficient} for delay-induced Hopf when $N=2$; for
        $N\geq 3$ a separate no-delay stability condition is required,
        which in the symmetric case $\gamma_i\equiv\gamma$ takes the
        explicit closed form $\gamma^N<\Lambda<\gamma^N\sec^N(\pi/N)$,
        a non-empty window for every $N$ inside which the delay
        genuinely \emph{induces} the bifurcation rather than merely
        modifying an already-unstable Goodwin regime
        (Theorems~\ref{thm:n_gene_char_eq}--\ref{thm:n_gene_transversality},
        Proposition~\ref{prop:symmetric_window}, and
        Remark~\ref{rem:necessary_sufficient} in
        Section~\ref{sec:n_gene}).
\end{enumerate}

The proofs are constructive and use only the elementary closed-form
properties of the logistic function: the self-referential derivative
$(f^+)' = \lambda f^+(1-f^+)$, the complement identity
$f^- = 1-f^+$, and global $C^\infty$ regularity. The corresponding analysis
for Hill-based oscillators is technically substantially more involved,
because $h^+$ is only $C^{\lfloor n\rfloor}$ for non-integer $n$, the
derivatives $h^{+\prime}$ and $h^{+\prime\prime}$ involve fractional powers
that resist closed-form symbolic manipulation, and the Jacobian entries do
not admit the clean expressions that drive the logistic analysis below. The
contrast reinforces the central message of the companion
papers~\cite{belgacem2025exploring, belgacem2026logistic}: logistic-based regulatory functions
support modern analytical techniques in regimes where Hill-based
formulations either fail outright or require substantial workarounds.

\subsection{Related work}
\label{sec:related}

Delay-driven oscillations in negative-feedback gene regulatory networks
have been studied for over four decades. Foundational
existence-of-periodic-solution results for cyclic negative-feedback
systems go back to Hastings, Tyson, and
Webster~\cite{hastings1977existence} in the delay-free case and to
Mahaffy and Pao~\cite{mahaffy1984models} for compartmental models with explicit
transcription and translation delays, the latter providing a Hopf
bifurcation theorem in the limiting case without diffusion. Smolen,
Baxter, and Byrne~\cite{smolen2002reduced} subsequently introduced
two-component delay-differential models of the \emph{Drosophila}
circadian oscillator and demonstrated numerically that delays are
essential for sustained rhythms in such reduced systems.

The two-gene Hopf-bifurcation analysis closest to the present paper is
that of Xiao and Cao~\cite{xiao2008genetic}, who studied a generic
two-gene regulatory network with two transcriptional delays
$\tau_1,\tau_2$, took the sum $\tau=\tau_1+\tau_2$ as bifurcation
parameter, derived sufficient conditions for the existence of a Hopf
bifurcation, and showed numerically that the period grows almost
linearly with $\tau$ -- structural conclusions that we recover and
quantify analytically for the logistic case. Verdugo and
Rand~\cite{verdugo2008hopf} performed a parallel analysis for a
single-gene autorepressor formulated as a coupled ODE--DDE system, and
applied Lindstedt's method to obtain closed-form approximations for the
limit-cycle amplitude and frequency near onset; their explicit
Hopf-radius formula is given in~\cite{rand2007hopf}. Wang, Yang, and
Turcotte~\cite{wang2020dynamic} analysed a related two-gene mutually
inhibitory network with delay, and the underlying methodology --
characterisation of imaginary-axis crossings of a quasi-polynomial -- has
been refined in many directions, including distributed
delays~\cite{macdonald1989biological}, the geometric stability-switch
criteria of~\cite{beretta2002geometric}, and Goodwin-style multi-delay
networks~\cite{huang2015hopf}.

The technical contribution of the present paper is therefore not the
existence of a delay-induced Hopf bifurcation in a two-gene
negative-feedback network -- that picture is well established -- but the
closed-form tractability afforded by the logistic regulatory function.
Where Hill-based analyses such as~\cite{xiao2008genetic,verdugo2008hopf}
typically reach the critical delay only implicitly or via perturbation
expansion, the logistic identities $(f^+)'=\lambda f^+(1-f^+)$ and
$f^-=1-f^+$ collapse the imaginary-axis-crossing analysis to a quadratic
in $\omega^2$ with elementary coefficients, and reduce the loop-gain
criterion to the inequality $AB>\gamma_1\gamma_2$ with $A,B$ given in
closed form by elementary expressions in the parameters and the
equilibrium. The transversality calculation likewise admits a clean
analytical proof.

The analysis also fits within an ongoing programme on the analytical and
computational dynamics of gene regulatory networks. The present two-gene
system is the smallest non-trivial instance of the cyclic feedback
structure whose high-dimensional behaviour was investigated
computationally in~\cite{belgacem2025computer,farcot2019chaos}, where
Glass-network ring circuits were shown to support periodic, quasi-periodic,
and chaotic attractors with rich bifurcation structure. The closely
related question of how to \emph{control} such oscillations, by feedback
design rather than parameter sweeping, was the subject
of~\cite{belgacem2021control,chambon2020qualitative}. The logistic
regulatory function used here is the smooth-sigmoidal counterpart of the
Glass-network step function; the corresponding smooth-versus-discontinuous
transition, including its hybrid-system regularisation by probabilistic
convolution, was studied in~\cite{belgacem2019probabilistic}. Earlier
analytical work established global stability and reduced models for
concave gene expression~\cite{belgacem2014mathematical} and for
transcription--translation coupled models of RNA polymerase
regulation~\cite{belgacem2018reduction,belgacem2014stability,belgacem2013stability,belgacem2013analysis},
with companion work on the global stability of biochemical reaction
networks~\cite{belgacem2013global,belgacem2012global}. The present paper
extends that programme by introducing transcriptional delays into the
simplest logistic two-gene oscillator and characterising the resulting
bifurcation structure in closed form.

The paper is organised as follows.
Section~\ref{sec:formulation} states the delay model and identifies the
equilibrium.
Section~\ref{sec:linearization} computes the linearisation and the
characteristic equation.
Section~\ref{sec:hopf} establishes the existence of purely imaginary
characteristic roots and computes the Hopf frequency in closed form.
Section~\ref{sec:transversality} verifies the transversality condition,
proving that a generic Hopf bifurcation occurs.
Section~\ref{sec:param_global} analyses the dependence of the
bifurcation on the logistic steepness $\lambda$ and establishes the
global persistence of the bifurcating branch of periodic orbits.
Section~\ref{sec:numerical} validates the analytical theory by direct
numerical integration of the full nonlinear DDE, confirms the
$\sqrt{\tau-\tau_c}$ amplitude scaling, and reports the period and
relaxation behaviour for $\tau$ well above onset.
Section~\ref{sec:analytical_supercrit} derives the first Lyapunov
coefficient in closed form by a Lindstedt--Poincar\'e reduction, proving
supercriticality for the symmetric-threshold loop and supplying an
explicit criticality criterion for the general asymmetric loop.
Section~\ref{sec:bio_validation} validates the model externally against
the experimentally characterised p53--Mdm2 oscillator and against the
standard Hill-function description.
Section~\ref{sec:extensions} extends the analysis to cyclic $N$-gene
logistic loops --- with closed-form Hopf and transversality theorems for
arbitrary loop length --- and sketches multi-loop De~Morgan networks and
distributed delays, and Section~\ref{sec:conclusion} concludes.

%%%%%%%%%%%%%%%%%%%%%%%%%%%%%%%%%%%%%%%%%%%%%%%%%%%%%%%%%%%%%%%%%%%%%%%%%%%%%%%%
\section{The delayed logistic two-gene oscillator}
\label{sec:formulation}

\subsection{The model}

We consider the gene network of two regulators connected by a single
negative-feedback loop: gene $1$ produces a protein that activates gene $2$,
whose product in turn represses gene $1$. Denoting by $x_i(t)$ the
concentration of the $i$-th protein, the regulatory machinery is realised by
the increasing logistic activation
$f^+(x,\theta,\lambda) = 1/(1+e^{-\lambda(x-\theta)})$ and the decreasing
logistic repression $f^-(x,\theta,\lambda) = 1/(1+e^{\lambda(x-\theta)})$,
which are globally $C^\infty$, real-valued for all real arguments, and
strictly positive on every concentration range; see the companion
paper~\cite{belgacem2026logistic} for a detailed comparison with Hill
functions. The delay-free dynamics are
\begin{equation}
\dot{x}_1 = \kappa_1\,f^-(x_2,\theta_2,\lambda) - \gamma_1 x_1,
\qquad
\dot{x}_2 = \kappa_2\,f^+(x_1,\theta_1,\lambda) - \gamma_2 x_2.
\label{eq:ode}
\end{equation}
Adding a transcriptional/translational delay $\tau_1\geq 0$ on the activator
branch and a delay $\tau_2\geq 0$ on the repressor branch yields the
delay-differential system~\eqref{eq:dde} stated in the introduction. Both
delays are non-negative; we make no symmetry assumption.

The delay $\tau_1$ models the lag from the moment $x_1$ is produced to the
moment its action on the $x_2$ promoter takes effect: in
\textit{Escherichia coli} the transcription of a typical mRNA takes
$30$\,s--$2$\,min~\cite{young1976polypeptide}, translation initiation and
elongation a further $30$\,s--$2$\,min, and folding from $1$\,s to several
minutes. In eukaryotic cells the lag is dominated by transcription and
splicing of the gene encoding $x_2$, plus nucleocytoplasmic transport of
the $x_2$ mRNA, plus translation and folding, giving a total of
$10$--$60$\,minutes~\cite{monk2003oscillatory}. The delay $\tau_2$ has the
analogous interpretation. We use generic dimensionless time units throughout.

\subsection{The equilibrium}

The equilibria of~\eqref{eq:dde} coincide with those of~\eqref{eq:ode},
because constant delays do not change steady states. They are the solutions
of
\begin{equation}
\kappa_1\,f^-(x_2^*,\theta_2,\lambda) = \gamma_1 x_1^*,
\qquad
\kappa_2\,f^+(x_1^*,\theta_1,\lambda) = \gamma_2 x_2^*.
\label{eq:eq_pt}
\end{equation}
\begin{lemma}[Existence and uniqueness of the equilibrium]
\label{lem:eq_unique}
Let $\kappa_i,\gamma_i,\lambda>0$ and $\theta_i\in\mathbb{R}$ for
$i=1,2$. Then the equilibrium equations~\eqref{eq:eq_pt} admit a unique
solution
\[
(x_1^*,x_2^*)\,\in\,
(0,\kappa_1/\gamma_1)\times(0,\kappa_2/\gamma_2).
\]
\end{lemma}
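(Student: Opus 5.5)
We reduce the two equilibrium equations in~\eqref{eq:eq_pt} to a single scalar equation and apply the intermediate value theorem together with strict monotonicity. The second equation gives $x_2^*$ explicitly as a function of $x_1^*$, namely $x_2^* = g(x_1^*) := (\kappa_2/\gamma_2)\, f^+(x_1^*,\theta_1,\lambda)$. Since $f^+$ takes values in $(0,1)$ for every real argument, $g$ maps $\mathbb{R}$ into $(0,\kappa_2/\gamma_2)$. It is also strictly increasing, because $(f^+)'=\lambda f^+(1-f^+)>0$. Substituting into the first equation, equilibria correspond exactly to zeros of
\[
F(x) := \kappa_1\, f^-\bigl(g(x),\theta_2,\lambda\bigr) - \gamma_1 x, \qquad x\in\mathbb{R}.
\]

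We then establish the properties of $F$. It is continuous, indeed $C^\infty$, as a composition of smooth functions. It is strictly decreasing: $f^-(\cdot,\theta_2,\lambda)$ is strictly decreasing, since $(f^-)' = -\lambda f^-(1-f^-)<0$, and composing it with the increasing map $g$ yields a decreasing function. Subtracting the strictly increasing term $\gamma_1 x$ then makes $F$ strictly decreasing. For the boundary values, $F(0) = \kappa_1 f^-(g(0),\theta_2,\lambda) > 0$ because $f^->0$. Also
\[
F(\kappa_1/\gamma_1) = \kappa_1\bigl(f^-(g(\kappa_1/\gamma_1),\theta_2,\lambda) - 1\bigr) < 0,
\]
because $f^-<1$. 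By the intermediate value theorem there is a zero $x_1^*\in(0,\kappa_1/\gamma_1)$. By strict monotonicity it is the only zero on all of $\mathbb{R}$, so uniqueness holds globally and not merely within the box. Setting $x_2^* = g(x_1^*)$ gives a point in $(0,\kappa_2/\gamma_2)$ by the range property of $g$.

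Finally, we check that the correspondence is bijective. Any solution $(x_1,x_2)$ of~\eqref{eq:eq_pt} must satisfy $x_2=g(x_1)$ and hence $F(x_1)=0$. This forces $x_1=x_1^*$ and then $x_2=x_2^*$, which completes uniqueness. No step is a genuine obstacle. The only point needing care is to state the strict bounds $0<f^\pm<1$, which hold for arbitrary real $\theta_i$ and are what yield the open box. No assumption on the sign of $\theta_i$ is needed, since the logistic functions are defined and bounded on all of $\mathbb{R}$.
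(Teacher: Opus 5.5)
Your proof is correct and follows essentially the same route as the paper: eliminate $x_2^*$ through the second equilibrium equation, apply the intermediate value theorem to the resulting scalar function on $[0,\kappa_1/\gamma_1]$, and use strict monotonicity for uniqueness. The only differences are minor. You obtain monotonicity by composing monotone maps rather than by the paper's explicit derivative formula, and you state explicitly that $x_2^*\in(0,\kappa_2/\gamma_2)$ and that the solution correspondence is bijective, which the paper leaves implicit.
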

\begin{proof}
Eliminating $x_2^*$ via the second equation gives
\[
x_2^* \;=\; (\kappa_2/\gamma_2)\, f^+(x_1^*,\theta_1,\lambda).
\]
Substituting in the first equation defines the function
\[
g(x_1) \;=\; \kappa_1\,
   f^-\!\bigl((\kappa_2/\gamma_2)f^+(x_1,\theta_1,\lambda),
              \theta_2,\lambda\bigr) - \gamma_1 x_1.
\]
We have $g(0) = \kappa_1 f^-(0,\theta_2,\lambda)>0$ and
$g(\kappa_1/\gamma_1) =
\kappa_1\, f^-(\,\cdot\,,\theta_2,\lambda)-\kappa_1 < 0$ since
$f^-<1$. Hence there exists $x_1^*\in(0,\kappa_1/\gamma_1)$ with
$g(x_1^*)=0$. Differentiating,
\[
g'(x_1) \;=\; -\gamma_1
- \frac{\kappa_1\kappa_2}{\gamma_2}\,\lambda^2\,
   f^-(1-f^-)\,f^+(1-f^+) \;<\; 0,
\]
where all logistic factors are evaluated at the appropriate arguments.
Since $g$ is strictly decreasing, $x_1^*$ is unique, and the
corresponding $x_2^*$ is likewise determined uniquely.
\end{proof}

\subsection{Existence, positivity, and dissipativity}
\label{sec:existence}

The right-hand side of~\eqref{eq:dde} is a globally Lipschitz function of
the (delayed) state, with Lipschitz constant bounded by
$\max(\gamma_1,\gamma_2) + \lambda \max(\kappa_1,\kappa_2)/4$, since the
slope $\lambda f^\pm(1-f^\pm)$ of each logistic factor is bounded by
$\lambda/4$. By the standard existence and uniqueness theorem for
retarded functional differential
equations~\cite[Thm.~2.2.1]{hale1993introduction}, for every continuous
initial history
$\phi=(\phi_1,\phi_2)\in C\bigl([-\tau_{\max},0],\mathbb{R}_{\geq 0}^2\bigr)$
with $\tau_{\max}=\max(\tau_1,\tau_2)$, the system~\eqref{eq:dde} has a
unique solution defined for all $t\geq 0$.

The system enjoys two further structural properties that are essential
prerequisites for the bifurcation analysis below: solutions starting
with non-negative initial histories remain non-negative for all time,
and they are eventually trapped in a compact set independent of the
initial data.

\begin{proposition}[Positivity, boundedness, and dissipativity]
\label{prop:dissipative}
Let
\[
\phi\in C\bigl([-\tau_{\max},0],\,\mathbb{R}_{\geq 0}^2\bigr)
\]
and let $x(t;\phi)=(x_1(t),x_2(t))$ denote the corresponding solution
of~\eqref{eq:dde}. Define the absorbing box
\begin{equation}
\mathcal{B} \;=\; [0,\kappa_1/\gamma_1]\times [0,\kappa_2/\gamma_2]
\;\subset\; \mathbb{R}_{\geq 0}^2.
\label{eq:absorbing_box}
\end{equation}
Then:
\begin{enumerate}
  \item \textbf{Positivity.} $x_i(t)\geq 0$ for all $t\geq 0$ and $i=1,2$.
  \item \textbf{Forward invariance.} If
        $\phi_i([-\tau_{\max},0])\subset[0,\kappa_i/\gamma_i]$ for
        $i=1,2$, then $x(t;\phi)\in\mathcal{B}$ for all $t\geq 0$.
  \item \textbf{Dissipativity.} For every initial history $\phi$,
        $\limsup_{t\to\infty} x_i(t)\leq \kappa_i/\gamma_i$ for $i=1,2$,
        with exponential rate $\gamma_i$. Concretely, setting
        \[
        \Phi_i \;:=\; \max\!\Bigl(\,
            \sup_{s\in[-\tau_{\max},0]}\phi_i(s),\;
            \kappa_i/\gamma_i\,\Bigr),
        \]
        one has
        \begin{equation}
        x_i(t) \,\leq\, \kappa_i/\gamma_i
        \,+\, (\Phi_i-\kappa_i/\gamma_i)\,e^{-\gamma_i t}.
        \label{eq:dissipative_explicit}
        \end{equation}
\end{enumerate}
In particular, $\mathcal{B}$ is a compact, positively invariant absorbing
set: every forward orbit eventually enters and remains in $\mathcal{B}$.
The same positively invariant box and dissipative structure also appear
in the delay-free analysis of concave gene-expression
models~\cite{belgacem2014mathematical} and of the reduced
transcription--translation system of RNA polymerase
regulation~\cite{belgacem2018reduction}, where the absorbing set is the
natural domain of definition of the global Lyapunov-type arguments used
there.
\end{proposition}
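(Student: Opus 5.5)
The plan is to treat each equation of~\eqref{eq:dde} as a scalar linear ODE in $x_i$, driven by a forcing term that is already known along the solution. Fix a solution $x(t;\phi)$, which exists globally and uniquely by the remark preceding the proposition. Set
\[
u_1(t)=\kappa_1 f^-\!\bigl(x_2(t-\tau_2),\theta_2,\lambda\bigr),\qquad
u_2(t)=\kappa_2 f^+\!\bigl(x_1(t-\tau_1),\theta_1,\lambda\bigr).
\]
These are continuous functions of $t$. Because the logistic functions take values in $(0,1)$ for every real argument, $0<u_i(t)<\kappa_i$ holds for all $t\geq 0$. This bound holds whatever the sign or size of the delayed state, so there is no need for a method of steps. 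Then $\dot x_i=u_i(t)-\gamma_i x_i$, and variation of constants gives
\[
x_i(t)=\phi_i(0)e^{-\gamma_i t}+\int_0^t e^{-\gamma_i(t-s)}u_i(s)\,ds .
\]

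For \textbf{positivity}, note that $\phi_i(0)\geq 0$ and $u_i>0$, so the formula gives $x_i(t)\geq 0$, in fact $x_i(t)>0$ for $t>0$. For the \textbf{upper bound}, replace $u_i$ by $\kappa_i$ in the integral. This gives
\[
x_i(t)\leq \phi_i(0)e^{-\gamma_i t}+\frac{\kappa_i}{\gamma_i}\bigl(1-e^{-\gamma_i t}\bigr)
=\frac{\kappa_i}{\gamma_i}+\Bigl(\phi_i(0)-\frac{\kappa_i}{\gamma_i}\Bigr)e^{-\gamma_i t}.
\]
Since $\phi_i(0)\leq \sup_{[-\tau_{\max},0]}\phi_i\leq\Phi_i$, the bound~\eqref{eq:dissipative_explicit} follows. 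Letting $t\to\infty$ gives the $\limsup$ statement, with exponential rate $\gamma_i$.

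For \textbf{forward invariance}, assume $\phi_i\leq\kappa_i/\gamma_i$ on the history interval. Then $\Phi_i=\kappa_i/\gamma_i$, and the explicit bound reduces to $x_i(t)\leq\kappa_i/\gamma_i$. Together with positivity, this gives $x(t)\in\mathcal{B}$ for all $t\geq0$.

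For the \textbf{absorbing} claim, the explicit bound shows that every orbit enters any neighbourhood $[0,\kappa_i/\gamma_i+\varepsilon]$ in finite time. To conclude that the orbit actually lies in $\mathcal{B}$ from some time on, I would add a short barrier argument. Once $t\geq\tau_{\max}$, the forcing satisfies $u_i<\kappa_i$ strictly. So at any time where $x_i=\kappa_i/\gamma_i$ we get $\dot x_i<0$, and the orbit cannot exit $\mathcal{B}$ after entering it. Entry itself in finite time is the delicate point, since the decay is only asymptotic to the boundary. Here I would use that $u_i$ is bounded strictly below $\kappa_i$ once the solution is bounded: on bounded sets the delayed argument is bounded, hence $f^\pm\leq 1-\delta$. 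With $u_i\leq\kappa_i(1-\delta)$ for large $t$, the same variation-of-constants estimate pushes $x_i$ below $\kappa_i/\gamma_i$ in finite time.

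The main obstacle is this last step, establishing finite-time entry rather than mere asymptotic approach. The rest is a direct consequence of $0<f^\pm<1$ and the linear structure of the degradation term.
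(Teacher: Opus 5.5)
Your proposal is correct and follows essentially the paper's route. The paper derives the differential inequalities $-\gamma_i x_i\le\dot x_i\le\kappa_i-\gamma_i x_i$ and applies a Gronwall comparison, which is exactly your variation-of-constants formula with $0<u_i<\kappa_i$, and then replaces $x_i(0)$ by $\Phi_i$ as you do. Your extra finite-time-entry step is valid: positivity gives $f^-\le f^-(0)<1$, and the upper bound gives $f^+\le f^+(\Phi_1)<1$, so $u_i\le\kappa_i(1-\delta)$. This supplies something the paper only asserts, since \eqref{eq:dissipative_explicit} by itself yields asymptotic approach to $\mathcal{B}$, not entry. One minor point: $u_i<\kappa_i$ holds for every $t\ge 0$, so the restriction $t\ge\tau_{\max}$ in your barrier argument is unnecessary.
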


\begin{proof}
\textbf{Positivity.} The two coupled estimates $\dot{x}_i(t)\geq -\gamma_i x_i(t)$
follow from $\kappa_i f^\pm\geq 0$. Integrating gives
$x_i(t)\geq x_i(0)\,e^{-\gamma_i t}\geq 0$.

\textbf{Boundedness from above and dissipativity.} Since $f^\pm\in(0,1)$
pointwise, $\kappa_i f^\pm\leq\kappa_i$, so
$\dot{x}_i(t)\leq \kappa_i - \gamma_i x_i(t)$. The standard
Gronwall comparison with the linear ODE
$\dot{y}_i = \kappa_i - \gamma_i y_i$, $y_i(0)=x_i(0)$, gives
\[
x_i(t)\leq y_i(t) = \kappa_i/\gamma_i + (x_i(0)-\kappa_i/\gamma_i)\,e^{-\gamma_i t}.
\]
Replacing $x_i(0)$ by the supremum $\Phi_i$ over the initial history
yields the bound~\eqref{eq:dissipative_explicit}, from which both $(2)$
(taking $\Phi_i=\kappa_i/\gamma_i$) and $(3)$ follow immediately.
\end{proof}

\begin{remark}
\label{rem:rough_box}
Proposition~\ref{prop:dissipative} guarantees that the long-time dynamics
of~\eqref{eq:dde} are confined to the compact box $\mathcal{B}$. In
particular every $\omega$-limit set, including the equilibrium and any
limit cycle bifurcating from it, lies in $\mathcal{B}$. This justifies
restricting all subsequent stability and bifurcation analysis to a
neighbourhood of $\mathcal{B}$ and provides the compactness needed for
the global continuation of periodic orbits via Wu's global Hopf bifurcation
theorem~\cite{wu1998symmetric}, beyond the local Hopf result of
Section~\ref{sec:transversality}.
\end{remark}

%%%%%%%%%%%%%%%%%%%%%%%%%%%%%%%%%%%%%%%%%%%%%%%%%%%%%%%%%%%%%%%%%%%%%%%%%%%%%%%%
\section{Linearisation and the characteristic equation}
\label{sec:linearization}

\subsection{Linear approximation}

Set $u_i(t)=x_i(t)-x_i^*$ for $i=1,2$. Substituting in~\eqref{eq:dde} and
expanding to first order around the equilibrium gives the linear
delay-differential system
\begin{equation}
\begin{aligned}
\dot{u}_1(t) &= -\gamma_1 u_1(t) - B\,u_2(t-\tau_2),\\
\dot{u}_2(t) &= -\gamma_2 u_2(t) + A\,u_1(t-\tau_1),
\end{aligned}
\label{eq:linear_dde}
\end{equation}
where
\begin{equation}
A := \kappa_2\,\lambda\,f^+_*(1-f^+_*),
\qquad
B := \kappa_1\,\lambda\,f^-_*(1-f^-_*),
\label{eq:AB_def}
\end{equation}
with the abbreviations
$f^+_* = f^+(x_1^*,\theta_1,\lambda)$ and
$f^-_* = f^-(x_2^*,\theta_2,\lambda)$. Both $A$ and $B$ are strictly
positive: each logistic factor lies in $(0,1)$, so $f(1-f)\in(0,1/4)$, and
$\lambda,\kappa_i>0$ by assumption. The signs of the delayed terms reflect
the regulatory directions: gene $2$ activates gene $1$'s repression
(coefficient $-B<0$), and gene $1$ activates gene $2$ (coefficient
$+A>0$).

\subsection{The characteristic equation}

Looking for solutions of~\eqref{eq:linear_dde} of the exponential form
$u_i(t) = q_i e^{\mu t}$ leads to
\[
(\mu+\gamma_1)\,q_1 + B\,e^{-\mu\tau_2}\,q_2 = 0,
\qquad
-A\,e^{-\mu\tau_1}\,q_1 + (\mu+\gamma_2)\,q_2 = 0.
\]
A non-trivial $(q_1,q_2)$ exists iff the determinant of the coefficient
matrix vanishes, giving the characteristic equation
\begin{equation}
\Delta(\mu;\tau)
\;\equiv\;
(\mu+\gamma_1)(\mu+\gamma_2) + AB\,e^{-\mu\tau} = 0,
\qquad \tau := \tau_1+\tau_2.
\label{eq:char_eq}
\end{equation}
The characteristic function~$\Delta$ depends on the delays only through
their \emph{sum}~$\tau$. This is a structural property of feedback loops:
in any negative-feedback cycle the loop time around the cycle, not the
distribution of delays along the cycle, controls the linear stability of
the equilibrium~\cite[Thm.~5.5.1]{kuang1993delay}. The same eigenvector
notation $(q_1,q_2)$ will be used in Proposition~\ref{prop:eigvec_amplitude}
to compute the closed-form eigenvector amplitude $|q_1|^{2}$ at the
Hopf onset $\mu=i\omega_c$.

\begin{remark}[Hill counterpart]
The Hill-based version of the same oscillator gives a structurally
identical equation, with $A_{\rm H},B_{\rm H}$ replacing
$A,B$ in~\eqref{eq:char_eq}. However, the Hill derivatives at equilibrium
involve $h^{+\prime}(x^*)=n\theta^n (x^*)^{n-1}/((x^*)^n+\theta^n)^2$,
which for non-integer~$n$ do not simplify and obstruct the closed-form
analysis carried out below for the logistic case.
\end{remark}

\subsection{The delay-free case}

Setting $\tau=0$ in~\eqref{eq:char_eq} recovers the polynomial
$\mu^2 + (\gamma_1+\gamma_2)\mu + (\gamma_1\gamma_2+AB) = 0$. Both
coefficients are strictly positive, so the Routh--Hurwitz criterion
implies that both roots have negative real parts: the equilibrium
of~\eqref{eq:ode} is asymptotically stable. The discriminant
$(\gamma_1+\gamma_2)^2-4(\gamma_1\gamma_2+AB) = (\gamma_1-\gamma_2)^2-4AB$
is negative whenever $AB>(\gamma_1-\gamma_2)^2/4$, which is the natural
``strong-feedback'' regime; in that regime the equilibrium is a stable
focus and trajectories spiral inward, consistent with the damped
oscillations reported in the companion paper.

\subsection{Sum-of-delays symmetry of the full nonlinear system}
\label{sec:sum_symmetry}

The characteristic equation~\eqref{eq:char_eq} depends on $\tau_1,\tau_2$
only through their sum $\tau=\tau_1+\tau_2$. We now record the
non-trivial fact that an analogous reduction holds for the
\emph{full nonlinear} system~\eqref{eq:dde}: redistributing the delay
between the two branches does not alter the period, the component-wise
envelopes, or the local stability type of any solution; it only shifts
the relative timing between $x_1$ and $x_2$ (the trace of the orbit in
the $(x_1,x_2)$ plane is not preserved, since the transformation
re-times only the first component).

\begin{proposition}[Sum-of-delays symmetry]
\label{prop:sum_symmetry}
Fix $\tau\geq 0$ and let $\delta\in[-\tau_2,\tau_1]$. If
$x(t)=(x_1(t),x_2(t))$ is a solution of~\eqref{eq:dde} with delays
$(\tau_1,\tau_2)$, then $\tilde{x}(t):=(x_1(t-\delta),x_2(t))$
is a solution of the same system with delays
$(\tau_1-\delta,\tau_2+\delta)$. The map $x\mapsto\tilde{x}$ is a
bijection between solution spaces and preserves
\begin{enumerate}
  \item the period of every periodic solution;
  \item the individual-component envelopes $\sup_t x_i(t)$ and
        $\inf_t x_i(t)$, and hence the amplitudes
        $A_i = \tfrac{1}{2}(\sup_t x_i-\inf_t x_i)$ for $i=1,2$;
  \item the local stability type of every equilibrium and of every
        periodic orbit (Floquet exponents are invariant).
\end{enumerate}
The phase between $x_1$ and $x_2$ shifts by $\delta$ time units; in
particular, the trace of the orbit in the $(x_1,x_2)$ plane is
\emph{not} preserved, since $\tilde{x}$ is obtained by re-timing only
the first component.
\end{proposition}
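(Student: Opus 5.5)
The plan is a direct substitution check followed by reading off the invariants. Set $\tilde{x}_1(t)=x_1(t-\delta)$ and $\tilde{x}_2(t)=x_2(t)$, with new delays $\tilde\tau_1=\tau_1-\delta\ge 0$ and $\tilde\tau_2=\tau_2+\delta\ge 0$; these are non-negative precisely because $\delta\in[-\tau_2,\tau_1]$.

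For the first equation, evaluate \eqref{eq:dde} for $x$ at time $t-\delta$:
\[
\dot{\tilde x}_1(t)=\dot x_1(t-\delta)=\kappa_1 f^-\bigl(x_2(t-\delta-\tau_2),\theta_2,\lambda\bigr)-\gamma_1 x_1(t-\delta).
\]
The argument of $f^-$ equals $\tilde x_2(t-\tilde\tau_2)$, since $t-\delta-\tau_2=t-(\tau_2+\delta)$, and $x_1(t-\delta)=\tilde x_1(t)$.

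For the second equation,
\[
\dot{\tilde x}_2(t)=\kappa_2 f^+\bigl(x_1(t-\tau_1),\theta_1,\lambda\bigr)-\gamma_2\tilde x_2(t).
\]
Here $x_1(t-\tau_1)=\tilde x_1(t-\tau_1+\delta)=\tilde x_1(t-\tilde\tau_1)$. So $\tilde x$ solves \eqref{eq:dde} with delays $(\tilde\tau_1,\tilde\tau_2)$, whose sum is still $\tau$.

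\textbf{Bijectivity.} The inverse map is the same construction with $-\delta$, which sends the delays $(\tilde\tau_1,\tilde\tau_2)$ back to $(\tau_1,\tau_2)$. The admissible range is compatible: $-\delta\in[-\tilde\tau_2,\tilde\tau_1]$ is equivalent to $\delta\in[-\tau_2,\tau_1]$.

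The one technical point is the solution space. Solutions are determined by histories on $[-\tau_{\max},0]$, and the transform changes which past values are needed. I would handle this by working with solutions defined on all of $\mathbb{R}$, or at least for $t$ large, which is the relevant setting for periodic orbits and $\omega$-limit sets. For the initial-value problem I would instead map the history segment correspondingly, using uniqueness from the existence theorem cited in Section~\ref{sec:existence}. I expect this bookkeeping to be the main obstacle; the algebra itself is routine.

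\textbf{Invariants.} Each is read off from the form of the map.
\begin{enumerate}
\item \emph{Period.} If $x$ is $T$-periodic, so is each component of $\tilde x$, because a time shift preserves periodicity. The same argument applied to the inverse gives equality of minimal periods.
\item \emph{Envelopes and amplitudes.} $\sup_t$ and $\inf_t$ are invariant under translating the time variable, so $\sup_t x_i$, $\inf_t x_i$ and hence $A_i$ are unchanged for $i=1,2$.
\item \emph{Stability type.} The map is a linear isometry in sup-norm on whole trajectories and commutes with the flow up to this re-timing. Equilibria are fixed by it. For Floquet exponents, linearise about a periodic orbit: the variational equation transforms by the same component shift, so a Floquet solution $p(t)e^{\mu t}$ goes to $(p_1(t-\delta)e^{-\mu\delta}e^{\mu t},\,p_2(t)e^{\mu t})$. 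This is again of Floquet form with the same exponent $\mu$, and the inverse map shows the converse. At an equilibrium this reduces to \eqref{eq:char_eq} depending only on $\tau$.
\end{enumerate}

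Finally, the orbit trace $\{(x_1(t),x_2(t))\}$ becomes $\{(x_1(t-\delta),x_2(t))\}$, which generally differs when $\delta\neq 0$. This gives the closing remark that the relative phase shifts by $\delta$ while the plane trace is not preserved.
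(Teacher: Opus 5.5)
Your proposal is correct and follows the paper's proof essentially step for step. It uses the same substitution check in each equation and the same inverse via $-\delta$. It reads off the invariants from the fact that each component of $\tilde x$ is either unchanged or a pure time-translate, and it preserves Floquet exponents by the same component-wise re-timing, which your explicit map $p(t)e^{\mu t}\mapsto\bigl(p_1(t-\delta)e^{-\mu\delta},\,p_2(t)\bigr)e^{\mu t}$ makes concrete. Your remark on history bookkeeping is a genuine refinement that the paper passes over: for an initial-value solution the transformed identities only hold for $t\ge\max(\delta,0)$, so working with entire or eventually-defined solutions, as you propose, is the right fix. There is one small, harmless slip: the condition $-\delta\in[-\tilde\tau_2,\tilde\tau_1]$ reduces to $\tau_1,\tau_2\ge 0$ and so holds automatically, rather than being equivalent to $\delta\in[-\tau_2,\tau_1]$; your conclusion that the inverse is admissible still stands.
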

\begin{proof}
Substituting the definition of $\tilde{x}$ into the candidate equation
\[
\dot{\tilde{x}}_1(t) = \kappa_1 f^-\!\bigl(\tilde{x}_2(t-(\tau_2+\delta))\bigr) - \gamma_1 \tilde{x}_1(t)
\]
gives, on the left,
$\dot{x}_1(t-\delta)$, and on the right
$\kappa_1 f^-\!\bigl(x_2(t-(\tau_2+\delta))\bigr) - \gamma_1 x_1(t-\delta)$.
Setting $s = t-\delta$, the right-hand side equals
$\kappa_1 f^-\!\bigl(x_2(s-\tau_2)\bigr) - \gamma_1 x_1(s)$, which by
hypothesis equals $\dot{x}_1(s)=\dot{x}_1(t-\delta)$, the left-hand
side. The verification for $\tilde{x}_2$ is symmetric:
$\dot{\tilde{x}}_2(t)=\dot{x}_2(t)$ matches
$\kappa_2 f^+\!\bigl(\tilde{x}_1(t-(\tau_1-\delta))\bigr)-\gamma_2\tilde{x}_2(t)
=\kappa_2 f^+\!\bigl(x_1(t-\tau_1)\bigr)-\gamma_2 x_2(t)$, again by
hypothesis. The bijectivity of $x\mapsto\tilde{x}$ is immediate from its
linearity and the existence of an inverse via $\delta\mapsto-\delta$.
Period and component-wise envelopes are preserved because each
component of $\tilde{x}$ is either equal to or a pure time-translate of
the corresponding component of $x$. Floquet exponents are preserved
because the linearisation along $\tilde{x}$ is conjugate to that along
$x$ by the same time-shift on the first component.
\end{proof}

In particular, the analytical conclusions of this paper are statements
about $\tau=\tau_1+\tau_2$, not about $(\tau_1,\tau_2)$ separately, and
we may assume $\tau_1=\tau_2=\tau/2$ in numerical experiments without
loss of generality \emph{for the invariants listed in
Proposition~\ref{prop:sum_symmetry}}. We verify this directly in
Section~\ref{sec:numerical} (Table~\ref{tab:asymmetric}): three
splittings of $\tau=0.20$ produce numerically identical amplitude and
period to four decimal places. The shape of the phase portrait in the
$(x_1,x_2)$ plane does depend on the splitting, but this dependence is
purely a re-timing artefact and carries no dynamical content beyond a
relative phase shift.

%%%%%%%%%%%%%%%%%%%%%%%%%%%%%%%%%%%%%%%%%%%%%%%%%%%%%%%%%%%%%%%%%%%%%%%%%%%%%%%%
\section{Imaginary-axis crossings: the Hopf condition}
\label{sec:hopf}

We seek values of $\tau$ at which the characteristic
equation~\eqref{eq:char_eq} admits a pair of purely imaginary roots
$\mu=\pm i\omega$ with $\omega>0$. Substituting $\mu=i\omega$:
\[
(i\omega+\gamma_1)(i\omega+\gamma_2) + AB\,e^{-i\omega\tau} = 0.
\]
Expanding $(i\omega+\gamma_1)(i\omega+\gamma_2) = -\omega^2
+i\omega(\gamma_1+\gamma_2) + \gamma_1\gamma_2$ and
$e^{-i\omega\tau}=\cos(\omega\tau)-i\sin(\omega\tau)$, separating real and
imaginary parts gives
\begin{align}
\gamma_1\gamma_2-\omega^2 + AB\cos(\omega\tau) &= 0,\label{eq:re}\\
\omega(\gamma_1+\gamma_2) - AB\sin(\omega\tau) &= 0.\label{eq:im}
\end{align}

\subsection{The Hopf frequency $\omega_c$}

Squaring~\eqref{eq:re} and~\eqref{eq:im} and adding eliminates $\tau$:
\begin{equation}
(\omega^2-\gamma_1\gamma_2)^2 + \omega^2(\gamma_1+\gamma_2)^2 = (AB)^2.
\label{eq:omega_eq}
\end{equation}
The left-hand side admits the useful factorisation
\begin{equation}
(\omega^2-\gamma_1\gamma_2)^2 + \omega^2(\gamma_1+\gamma_2)^2
\;=\; (\omega^2+\gamma_1^2)(\omega^2+\gamma_2^2),
\label{eq:LHS_factorisation}
\end{equation}
which is the elementary identity
$|i\omega+\gamma_1|^2|i\omega+\gamma_2|^2 = |(i\omega+\gamma_1)(i\omega+
\gamma_2)|^2$ rewritten in expanded form, and which we shall use
repeatedly. With the substitution $p=\omega^2$ this becomes the
quadratic
\begin{equation}
p^2 + (\gamma_1^2+\gamma_2^2)\,p + \gamma_1^2\gamma_2^2 - (AB)^2 = 0,
\label{eq:p_quadratic}
\end{equation}
whose discriminant is
$D = (\gamma_1^2+\gamma_2^2)^2 - 4\bigl[\gamma_1^2\gamma_2^2-(AB)^2\bigr]
= (\gamma_1^2-\gamma_2^2)^2 + 4(AB)^2 > 0$.

\begin{lemma}[Existence and uniqueness of $\omega_c$]
\label{lem:omega}
The quadratic~\eqref{eq:p_quadratic} has a unique positive root
\begin{equation}
p_+ = \frac{-(\gamma_1^2+\gamma_2^2)+\sqrt{(\gamma_1^2-\gamma_2^2)^2+4(AB)^2}}{2}
\label{eq:p_plus}
\end{equation}
if and only if $AB>\gamma_1\gamma_2$, and in that case
\begin{equation}
\omega_c = \sqrt{p_+}
\label{eq:omega_c}
\end{equation}
is the unique positive frequency at which~\eqref{eq:char_eq} has a purely
imaginary root.
\end{lemma}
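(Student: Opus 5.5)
The plan is to treat \eqref{eq:p_quadratic} as a monic quadratic in $p$ and read off the sign structure of its roots from Vieta's formulas. After that, I will argue that every positive root actually produces a purely imaginary characteristic root for some delay.

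\emph{Step 1 (roots of the quadratic).} The discriminant $D=(\gamma_1^2-\gamma_2^2)^2+4(AB)^2$ is strictly positive, so the two roots $p_\pm$ are real and distinct. Their sum is $-(\gamma_1^2+\gamma_2^2)<0$ and their product is $\gamma_1^2\gamma_2^2-(AB)^2$. If $AB>\gamma_1\gamma_2$, the product is negative, so exactly one root is positive, namely $p_+$ in \eqref{eq:p_plus}. If $AB<\gamma_1\gamma_2$, the product is positive and the sum is negative, so both roots are negative. If $AB=\gamma_1\gamma_2$, the roots are $0$ and $-(\gamma_1^2+\gamma_2^2)$, and neither is positive. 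This gives the ``if and only if'' for a unique positive root. Equivalently, I can use the factorisation \eqref{eq:LHS_factorisation}: the map $p\mapsto(p+\gamma_1^2)(p+\gamma_2^2)$ is strictly increasing on $[0,\infty)$ with value $\gamma_1^2\gamma_2^2$ at $p=0$, so it equals $(AB)^2$ at some $p>0$ exactly when $AB>\gamma_1\gamma_2$, and at most once. This monotonicity also settles uniqueness directly.

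\emph{Step 2 (necessity).} Any purely imaginary root $\mu=i\omega$ with $\omega>0$ satisfies \eqref{eq:re}--\eqref{eq:im}. Squaring and adding forces $p=\omega^2$ to satisfy \eqref{eq:p_quadratic}. By Step 1, $p$ must then equal $p_+$, and this requires $AB>\gamma_1\gamma_2$. Hence $\omega=\omega_c$ is the only possible positive frequency.

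\emph{Step 3 (sufficiency).} The main point needing care is the converse: that $\omega_c$ is actually realised for some $\tau\geq0$. Set $\omega=\omega_c$. By \eqref{eq:omega_eq} and \eqref{eq:LHS_factorisation}, the pair
\[
\bigl((\omega_c^2-\gamma_1\gamma_2)/AB,\;\omega_c(\gamma_1+\gamma_2)/AB\bigr)
\]
lies on the unit circle, so there is an angle $\varphi$ with $\cos\varphi$ and $\sin\varphi$ equal to these two entries. Since $\sin\varphi>0$, we may take $\varphi\in(0,\pi)$; this is the atan2 value. Then $\tau=\varphi/\omega_c>0$, together with its translates $(\varphi+2k\pi)/\omega_c$, solves \eqref{eq:re}--\eqref{eq:im}, so $\pm i\omega_c$ are roots of \eqref{eq:char_eq}; the conjugate root follows because the coefficients are real. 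Beyond this, the only obstacle is checking the case $AB=\gamma_1\gamma_2$, which Step 1 already handles, since there the only candidate is $\omega=0$.
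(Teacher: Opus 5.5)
Your proof is correct and follows the paper's argument. In both, the negative product $p_+p_-=\gamma_1^2\gamma_2^2-(AB)^2$ together with the negative sum $-(\gamma_1^2+\gamma_2^2)$ settles when a positive root exists, and any purely imaginary root is forced to satisfy \eqref{eq:omega_eq}. Your Step~3, which exhibits a delay $\tau=\varphi/\omega_c$ realising $\pm i\omega_c$, is not in the paper's proof of the lemma but appears immediately afterwards in the derivation of $\tau_c$ from \eqref{eq:cos_sin}, so including it just makes the lemma self-contained.
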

\begin{proof}
The product of the two roots of~\eqref{eq:p_quadratic} is
$p_+ p_- = \gamma_1^2\gamma_2^2 - (AB)^2$, which is negative iff
$AB>\gamma_1\gamma_2$. In that case $p_+>0>p_-$, and $\omega_c=\sqrt{p_+}$
is the unique positive solution of~\eqref{eq:omega_eq}. If
$AB\leq\gamma_1\gamma_2$ then $p_+ p_-\geq 0$; since the sum
$p_++p_-=-(\gamma_1^2+\gamma_2^2)<0$ both roots are non-positive, so
no positive $\omega$ satisfies~\eqref{eq:omega_eq}, and~\eqref{eq:char_eq}
admits no purely imaginary root for any $\tau$.
\end{proof}

\begin{corollary}[Stability for all $\tau$ in the weak-feedback regime]
If $AB\leq\gamma_1\gamma_2$, the equilibrium $(x_1^*,x_2^*)$
of~\eqref{eq:dde} is asymptotically stable for every $\tau\geq 0$.
\end{corollary}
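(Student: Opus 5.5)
The plan is a continuity argument in $\tau$. The roots of the characteristic equation~\eqref{eq:char_eq} depend continuously on $\tau$, and stability can only be lost if a root crosses the imaginary axis. Lemma~\ref{lem:omega} forbids such a crossing when $AB\le\gamma_1\gamma_2$.

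\textbf{Base case $\tau=0$.} We begin from the delay-free case treated in Section~\ref{sec:linearization}. There $\Delta(\mu;0)=\mu^2+(\gamma_1+\gamma_2)\mu+\gamma_1\gamma_2+AB$, and Routh--Hurwitz places both roots in the open left half-plane.

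\textbf{Uniform bound on unstable roots.} For $\tau>0$ the quasi-polynomial has infinitely many roots. We first check that roots with $\mathrm{Re}\,\mu\ge 0$ lie in a bounded set, uniformly in $\tau$. If $\mathrm{Re}\,\mu\ge0$, then $|e^{-\mu\tau}|\le1$, so
\[
|(\mu+\gamma_1)(\mu+\gamma_2)|\le AB .
\]
This forces $|\mu|\le\sqrt{AB}$. Consequently no root can enter the closed right half-plane from infinity as $\tau$ increases from $0$; this is the standard argument of Cooke--van den Driessche and Kuang.

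\textbf{Continuity argument.} By the argument principle (Rouch\'e on a large half-disc), the number of roots in $\mathrm{Re}\,\mu>0$, counted with multiplicity, is locally constant in $\tau$. It can change only at values of $\tau$ where a root lies on the imaginary axis.

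\textbf{Excluding imaginary-axis roots.} We rule out both possibilities.
\begin{itemize}
\item $\mu=0$: we have $\Delta(0;\tau)=\gamma_1\gamma_2+AB>0$, so zero is never a root.
\item $\mu=\pm i\omega$ with $\omega>0$: by Lemma~\ref{lem:omega}, such a root would require a positive solution of~\eqref{eq:omega_eq}, and none exists when $AB\le\gamma_1\gamma_2$.
\end{itemize}
Hence the count of unstable roots remains $0$ for all $\tau\ge0$, and every root satisfies $\mathrm{Re}\,\mu<0$.

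\textbf{Conclusion.} Since all roots have negative real part, the standard linearised stability theorem for retarded functional differential equations~\cite{hale1993introduction} applies. The right-hand side of~\eqref{eq:dde} is $C^\infty$, so the equilibrium of the nonlinear system~\eqref{eq:dde} is locally asymptotically stable for every $\tau\ge0$. By Proposition~\ref{prop:sum_symmetry}, equivalently since $\Delta$ depends only on $\tau$, this holds for every split $(\tau_1,\tau_2)$.

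\textbf{Main obstacle.} The delicate step is the root-counting continuity for a quasi-polynomial with infinitely many roots. It is handled by the a priori bound $|\mu|\le\sqrt{AB}$ on closed right-half-plane roots, which confines all potential crossings to a compact region. A minor edge case is $AB=\gamma_1\gamma_2$: then $p_+=0$, so the only candidate is $\omega=0$, which the check $\Delta(0;\tau)>0$ already excludes.
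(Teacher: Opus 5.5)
Your proposal is correct and follows the paper's route: rule out purely imaginary roots via Lemma~\ref{lem:omega}, then carry stability from $\tau=0$ to every $\tau\geq 0$ by continuous dependence of the roots on $\tau$. Your a priori bound $|\mu|\leq\sqrt{AB}$ on closed-right-half-plane roots, the Rouch\'e count, and the explicit exclusion of $\mu=0$ (which the paper omits, since it only treats $\omega>0$) fill in details the paper leaves to the Cooke--van~den~Driessche citation. Incidentally, your modulus estimate already finishes the proof without any continuity argument: for $\mathrm{Re}\,\mu\geq 0$ one has $|(\mu+\gamma_1)(\mu+\gamma_2)|\geq\gamma_1\gamma_2\geq AB\geq AB\,|e^{-\mu\tau}|$, with equality in the first step only at $\mu=0$, where $\Delta(0;\tau)>0$.
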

\begin{proof}
By Lemma~\ref{lem:omega}, $\Delta(i\omega;\tau)\neq 0$ for any $\omega>0$
and $\tau\geq 0$. Hence the eigenvalues of the linearisation cannot cross
the imaginary axis as $\tau$ varies. Since the equilibrium is stable at
$\tau=0$, it remains stable for all $\tau\geq 0$ by the principle of
continuous dependence of roots on the delay parameter
(Cooke--van~den~Driessche~\cite{cooke1986zeroes}).
\end{proof}

\subsection{The critical delay $\tau_c$}

Suppose now that $AB>\gamma_1\gamma_2$. From~\eqref{eq:re} and~\eqref{eq:im}
we read off
\begin{equation}
\cos(\omega_c\tau) = \frac{\omega_c^2-\gamma_1\gamma_2}{AB},
\qquad
\sin(\omega_c\tau) = \frac{\omega_c(\gamma_1+\gamma_2)}{AB}.
\label{eq:cos_sin}
\end{equation}
Both right-hand sides have absolute value at most~$1$ in view
of~\eqref{eq:omega_eq}, so a real $\tau$ satisfying~\eqref{eq:cos_sin}
indeed exists. Since $\sin(\omega_c\tau_c)>0$, we have
$\omega_c\tau_c\in(0,\pi)$. The sign of $\cos(\omega_c\tau_c)$
agrees with the sign of $\omega_c^2-\gamma_1\gamma_2$, so
$\omega_c\tau_c\in(0,\pi/2)$ when $\omega_c^2>\gamma_1\gamma_2$ and
$\omega_c\tau_c\in(\pi/2,\pi)$ when $\omega_c^2<\gamma_1\gamma_2$.
The latter case does occur in some parameter regimes but does not arise
in the canonical example below.

The smallest positive $\tau$ that solves~\eqref{eq:cos_sin} is therefore
\begin{equation}
\tau_c
= \frac{1}{\omega_c}\,
   \operatorname{atan2}\!\bigl(\omega_c(\gamma_1+\gamma_2),\,
                                 \omega_c^2-\gamma_1\gamma_2\bigr),
\label{eq:tau_c}
\end{equation}
where $\operatorname{atan2}(y,x)$ denotes the four-quadrant inverse
tangent valued in~$(0,2\pi)$. When the cosine
$\omega_c^2-\gamma_1\gamma_2>0$, this reduces to
$\tau_c=\omega_c^{-1}\arctan\bigl(\omega_c(\gamma_1+\gamma_2)/
(\omega_c^2-\gamma_1\gamma_2)\bigr)$. The general family of solutions is
\begin{equation}
\tau_c^{(k)} = \tau_c + \frac{2\pi k}{\omega_c},
\qquad k\in\mathbb{N}_0,
\label{eq:tau_branches}
\end{equation}
with $\tau_c^{(0)}=\tau_c$ the smallest. Each $\tau_c^{(k)}$ is a candidate
bifurcation point; whether the equilibrium gains or loses stability at
each crossing is determined by the sign of $\mathrm{Re}(d\mu/d\tau)$,
addressed in the next section.

\subsection{Summary}

Lemma~\ref{lem:omega} and equation~\eqref{eq:tau_c} together yield the
following explicit characterisation, the principal analytical result of
the paper.

\begin{theorem}[Characterisation of the imaginary-axis crossings]
\label{thm:hopf_existence}
For the linearised system~\eqref{eq:linear_dde} with $A,B$ defined
in~\eqref{eq:AB_def}:

\noindent\textbf{(i)} If $AB\leq\gamma_1\gamma_2$, the characteristic
equation~\eqref{eq:char_eq} has no purely imaginary root for any
$\tau\geq 0$, and the equilibrium is asymptotically stable for all delays.

\noindent\textbf{(ii)} If $AB>\gamma_1\gamma_2$, the characteristic
equation has purely imaginary roots $\mu=\pm i\omega_c$ with
$\omega_c=\sqrt{p_+}$ as in~\eqref{eq:p_plus}, occurring at the discrete
set of delays
$\tau_c^{(k)}=\tau_c+2\pi k/\omega_c$ for $k\in\mathbb{N}_0$, where
$\tau_c$ is given by~\eqref{eq:tau_c}.
\end{theorem}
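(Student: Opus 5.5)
The proof assembles Lemma~\ref{lem:omega}, the Corollary on weak feedback, and the trigonometric system~\eqref{eq:cos_sin}. We reduce everything to the real/imaginary split~\eqref{eq:re}--\eqref{eq:im}. Since the coefficients of $\Delta$ are real, roots come in conjugate pairs. We may therefore restrict to $\mu=i\omega$ with $\omega>0$, and we also dispose of $\omega=0$: $\Delta(0;\tau)=\gamma_1\gamma_2+AB>0$, so $\mu=0$ is never a root. Thus a purely imaginary root exists for some $\tau\ge0$ iff~\eqref{eq:re}--\eqref{eq:im} hold simultaneously for some $\omega>0$.

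For part (i), any solution $(\omega,\tau)$ of~\eqref{eq:re}--\eqref{eq:im} satisfies~\eqref{eq:omega_eq} after squaring and adding. By the factorisation~\eqref{eq:LHS_factorisation} this is~\eqref{eq:p_quadratic} in $p=\omega^2$. Lemma~\ref{lem:omega} says it has no positive root when $AB\le\gamma_1\gamma_2$, so there are no imaginary roots. Asymptotic stability for all $\tau\ge0$ is exactly the preceding Corollary. It rests on Routh--Hurwitz at $\tau=0$ and on continuity of roots in $\tau$ in the sense of Cooke--van den Driessche, since for retarded equations roots can only enter the right half-plane through the imaginary axis. I will simply cite it.

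For part (ii), necessity goes the same way: any imaginary root $i\omega$ forces $\omega^2=p_+$, i.e.\ $\omega=\omega_c$, by Lemma~\ref{lem:omega}. For sufficiency and the delay set, set $C=(\omega_c^2-\gamma_1\gamma_2)/AB$ and $S=\omega_c(\gamma_1+\gamma_2)/AB$. Since $\omega_c$ solves~\eqref{eq:omega_eq}, we have $C^2+S^2=1$, so there is a unique angle $\phi\in(0,2\pi)$ with $(\cos\phi,\sin\phi)=(C,S)$, namely $\phi=\operatorname{atan2}(\omega_c(\gamma_1+\gamma_2),\omega_c^2-\gamma_1\gamma_2)$. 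Moreover $S>0$ gives $\phi\in(0,\pi)$. Equations~\eqref{eq:re}--\eqref{eq:im} at $\omega=\omega_c$ are equivalent to $\cos(\omega_c\tau)=C$ and $\sin(\omega_c\tau)=S$, i.e.\ $\omega_c\tau\equiv\phi \pmod{2\pi}$. Hence $\tau=(\phi+2\pi k)/\omega_c$ with $k\in\mathbb{Z}$. The condition $\tau\ge0$ together with $\phi\in(0,2\pi)$ forces $k\in\mathbb{N}_0$, which gives exactly $\tau_c^{(k)}=\tau_c+2\pi k/\omega_c$ with $\tau_c$ from~\eqref{eq:tau_c}. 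Conjugation then supplies $-i\omega_c$.

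The only delicate point is the bookkeeping that makes the delay set exactly this family. First, the squaring step must be reversed: a positive $\omega$ solving~\eqref{eq:omega_eq} yields a genuine $\tau$ via the unit-circle argument above, not merely a candidate. Second, one must check that $\phi>0$ strictly, so that $\tau_c>0$ and no $k<0$ contributes. Both follow from $C^2+S^2=1$ and $S>0$, so I expect no real obstacle beyond stating these steps carefully.
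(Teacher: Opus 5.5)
Your proposal is correct and follows essentially the paper's route: the theorem is assembled from Lemma~\ref{lem:omega}, the weak-feedback corollary, and the phase relations~\eqref{eq:cos_sin} with the branch family~\eqref{eq:tau_branches}. You also make explicit three steps the paper leaves implicit or states loosely. You check that $\mu=0$ is never a root. You invoke $C^2+S^2=1$, rather than only $|C|,|S|\le 1$, to guarantee that a genuine $\tau$ exists. And you verify that $\phi\in(0,\pi)$ excludes $k<0$.
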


We now record an immediate but logically essential corollary, which fills
the gap between Theorem~\ref{thm:hopf_existence} and the Hopf bifurcation
theorem of the next section: in the strong-feedback regime the
equilibrium is stable up to $\tau_c$, not merely at $\tau=0$.

\begin{lemma}[Stability up to the first critical delay]
\label{lem:stable_below_tauc}
Assume $AB>\gamma_1\gamma_2$. Then for every $\tau\in[0,\tau_c)$, every
root of the characteristic equation~\eqref{eq:char_eq} has strictly
negative real part. In particular, the equilibrium $(x_1^*,x_2^*)$ is
locally asymptotically stable.
\end{lemma}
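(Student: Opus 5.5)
The argument is a continuity-of-roots argument in $\tau$, anchored at $\tau=0$. Three facts need to be assembled: stability at $\tau=0$, the impossibility of roots entering the closed right half-plane from infinity, and the impossibility of roots reaching the imaginary axis before $\tau_c$.

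\textbf{Base case $\tau=0$.} By the delay-free analysis of Section~\ref{sec:linearization}, $\Delta(\mu;0)$ is the quadratic $\mu^2+(\gamma_1+\gamma_2)\mu+\gamma_1\gamma_2+AB$. It has positive coefficients, so both roots lie in $\{\mathrm{Re}\,\mu<0\}$ by Routh--Hurwitz.

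\textbf{No roots from infinity.} For $\tau>0$ the quasi-polynomial has infinitely many roots, so I must show none of them can enter the right half-plane from infinity. Suppose $\mathrm{Re}\,\mu\ge 0$. Then $|AB\,e^{-\mu\tau}|\le AB$, while $|(\mu+\gamma_1)(\mu+\gamma_2)|\ge |\mu|^2$. Hence any root with $\mathrm{Re}\,\mu\ge0$ satisfies $|\mu|\le\sqrt{AB}$, uniformly in $\tau\ge0$. Roots therefore stay in a fixed compact region whenever they are in the closed right half-plane. Combined with analytic dependence of $\Delta$ on $(\mu,\tau)$ and Rouché's theorem (the Cooke--van~den~Driessche principle~\cite{cooke1986zeroes} already used in the weak-feedback corollary), the number of roots in $\{\mathrm{Re}\,\mu>0\}$, counted with multiplicity, can change only when a root lies on the imaginary axis.

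\textbf{No crossing before $\tau_c$.} I check that the imaginary axis is not touched for $\tau\in[0,\tau_c)$.
\begin{itemize}
\item $\mu=0$ is never a root, since $\Delta(0;\tau)=\gamma_1\gamma_2+AB>0$.
\item If $\mu=\pm i\omega$ with $\omega>0$ is a root, then by Lemma~\ref{lem:omega} necessarily $\omega=\omega_c$.
\item With $\omega=\omega_c$, $\tau$ must satisfy \eqref{eq:cos_sin}. By the atan2 argument preceding \eqref{eq:tau_c}, the solution set in $[0,\infty)$ is exactly $\{\tau_c+2\pi k/\omega_c\}$ with smallest element $\tau_c>0$. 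This uses $\omega_c\tau_c\in(0,\pi)$, which follows from $\sin>0$.
\end{itemize}
So no root lies on the imaginary axis for $\tau\in[0,\tau_c)$. The count of roots with $\mathrm{Re}\,\mu\ge0$ therefore stays at its value at $\tau=0$, namely zero.

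\textbf{Main obstacle.} The delicate point is the passage from $\tau=0$ to small $\tau>0$, where infinitely many new roots are born. These roots appear at $\mathrm{Re}\,\mu=-\infty$, because the equation is of retarded type: the leading term $\mu^2$ is undelayed. I would handle this with the bound $|\mu|\le\sqrt{AB}$ above, which shows that for small $\tau$ all roots outside a large disk have very negative real part; this is the standard retarded-type statement in~\cite{cooke1986zeroes}.

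\textbf{Conclusion.} All roots have strictly negative real part on $[0,\tau_c)$. Moreover, for each fixed $\tau$ only finitely many roots lie in any right half-plane $\mathrm{Re}\,\mu>-\epsilon$, so the spectral abscissa is negative. The linearised stability principle for retarded FDEs \cite{hale1993introduction} then gives local asymptotic stability of $(x_1^*,x_2^*)$.
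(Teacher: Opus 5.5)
Your proposal is correct and follows the same route as the paper's proof: Routh--Hurwitz at $\tau=0$, continuous dependence of the roots on $\tau$, and the absence of imaginary-axis roots on $[0,\tau_c)$ by Theorem~\ref{thm:hopf_existence}. Your additions make explicit what the paper leaves to its citation of Cooke--van~den~Driessche: the uniform bound $|\mu|\le\sqrt{AB}$ for roots with $\mathrm{Re}\,\mu\ge 0$, the exclusion of $\mu=0$ via $\Delta(0;\tau)=\gamma_1\gamma_2+AB>0$, and the appeal to the linearised stability principle. Note that this bound by itself, via an argument-principle count on the boundary of the half-disk $\{\mathrm{Re}\,\mu>0,\ |\mu|<R\}$ with $R>\sqrt{AB}$, already disposes of the roots born at infinity, so your claim that they have ``very negative'' real part is not needed and is not what the bound shows.
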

\begin{proof}
The roots of~\eqref{eq:char_eq} depend continuously on $\tau\geq 0$, in
the sense that the multiset of roots in any vertical strip
$\{|\mathrm{Re}\,\mu|\leq M\}$ varies continuously with~$\tau$
(\cite{cooke1986zeroes}; see also \cite[Ch.~XI]{hale1993introduction}). At $\tau=0$ the
equation reduces to the polynomial
\[
\mu^2 + (\gamma_1+\gamma_2)\mu + \gamma_1\gamma_2 + AB = 0,
\]
whose roots have strictly negative real part because the coefficients
are all strictly positive (Routh--Hurwitz for degree~$2$). Hence at
$\tau=0$ all roots lie in the open left half-plane. By
Theorem~\ref{thm:hopf_existence}\,(ii), no root lies on the imaginary
axis for any $\tau\in[0,\tau_c)$. By continuity, no root can cross from
the open left half-plane to the open right half-plane on
$[0,\tau_c)$ without first touching the imaginary axis, which is
forbidden. Therefore all roots remain in the open left half-plane for
$\tau\in[0,\tau_c)$.
\end{proof}

\begin{remark}[Stability switches at higher critical delays]
\label{rem:switches}
The same continuous-dependence argument shows that as $\tau$ increases
past $\tau_c$, the conjugate pair $\mu=\pm i\omega_c$ migrates into the
open right half-plane (transversality, Theorem~\ref{thm:transversality}
below); the equilibrium becomes unstable. The full sequence of critical
delays is the arithmetic progression
\begin{equation}
\tau_c^{(k)} \,=\, \tau_c \,+\, \frac{2\pi k}{\omega_c},
\qquad k=0,1,2,\ldots,
\label{eq:secondary_tau}
\end{equation}
because the phase condition
$e^{-i\omega_c\tau}=-(\mathrm{i}\omega_c+\gamma_1)(\mathrm{i}\omega_c+\gamma_2)/AB$
is $2\pi/\omega_c$-periodic in $\tau$. At each $\tau_c^{(k)}$, $k\geq 1$,
an additional conjugate pair crosses into the right half-plane. The
transversality value at the $k$-th crossing is obtained by substituting
$\tau_c^{(k)}$ into the closed-form formula~\eqref{eq:transversality}:
\begin{equation}
\mathrm{Re}\!\left(\frac{d\mu}{d\tau}\right)\!\bigg|_{\tau_c^{(k)}}
\,=\,
\frac{\omega_c^2\,(2\omega_c^2+\gamma_1^2+\gamma_2^2)}
     {\bigl(\gamma_1+\gamma_2-\tau_c^{(k)}(\omega_c^2-\gamma_1\gamma_2)\bigr)^2
      +\bigl(2\omega_c+\tau_c^{(k)}\omega_c(\gamma_1+\gamma_2)\bigr)^2}.
\label{eq:transversality_k}
\end{equation}
The numerator is $k$-independent, but the denominator grows
quadratically with $\tau_c^{(k)}$, so the transversality value
\emph{decreases} monotonically as $k\to\infty$ while remaining strictly
positive. For the canonical example
(Section~\ref{sec:canonical}) one finds
\[
\bigl(\mathrm{Re}\,d\mu/d\tau\bigr)|_{\tau_c^{(0)}}\!\approx\! 2.58,
\quad
\bigl(\mathrm{Re}\,d\mu/d\tau\bigr)|_{\tau_c^{(1)}}\!\approx\! 0.21,
\quad
\bigl(\mathrm{Re}\,d\mu/d\tau\bigr)|_{\tau_c^{(2)}}\!\approx\! 0.06.
\]
The equilibrium therefore loses additional unstable directions at each
$\tau_c^{(k)}$ and never recovers stability: this is the one-way
``stability switch'' familiar from the geometric criterion of Beretta
and Kuang~\cite{beretta2002geometric}. The dynamics for
$\tau>\tau_c^{(1)}$ are dominated by the first-born unstable mode at
$\tau_c$ (because subsequent crossings are progressively less
transversal), so the limit cycle observed numerically in
Section~\ref{sec:numerical} is the relevant attractor throughout the
parameter range we study.
\end{remark}

%%%%%%%%%%%%%%%%%%%%%%%%%%%%%%%%%%%%%%%%%%%%%%%%%%%%%%%%%%%%%%%%%%%%%%%%%%%%%%%%
\section{Transversality and the Hopf bifurcation theorem}
\label{sec:transversality}

Theorem~\ref{thm:hopf_existence} guarantees that the eigenvalue path
$\mu(\tau)$ touches the imaginary axis at $\tau=\tau_c$. To conclude that a
genuine Hopf bifurcation occurs, we must verify that the eigenvalue
\emph{crosses} the axis transversally, i.e.\
$\mathrm{Re}(d\mu/d\tau)|_{\tau=\tau_c}\neq 0$.

\subsection{Computing $d\mu/d\tau$}

Differentiating~\eqref{eq:char_eq} implicitly in $\tau$,
\[
\bigl[(2\mu+\gamma_1+\gamma_2) - AB\,\tau\,e^{-\mu\tau}\bigr]\,\frac{d\mu}{d\tau}
- AB\,\mu\,e^{-\mu\tau} = 0,
\]
which gives
\[
\frac{d\mu}{d\tau}
= \frac{AB\,\mu\,e^{-\mu\tau}}{2\mu+\gamma_1+\gamma_2 - AB\,\tau\,e^{-\mu\tau}}.
\]
Using~\eqref{eq:char_eq} to replace $AB\,e^{-\mu\tau}$ by
$-(\mu+\gamma_1)(\mu+\gamma_2)$,
\begin{equation}
\frac{d\mu}{d\tau}
= \frac{-\mu(\mu+\gamma_1)(\mu+\gamma_2)}
       {2\mu+\gamma_1+\gamma_2 + \tau(\mu+\gamma_1)(\mu+\gamma_2)}.
\label{eq:dmu_dtau}
\end{equation}

\subsection{Transversality at $\tau=\tau_c$}

Set $\mu=i\omega_c$ in~\eqref{eq:dmu_dtau}. Direct computation gives the
following.
\begin{theorem}[Transversality]
\label{thm:transversality}
At $\mu=i\omega_c,\;\tau=\tau_c$,
\begin{equation}
\mathrm{Re}\!\left(\frac{d\mu}{d\tau}\right)
= \frac{\omega_c^2\,\bigl[2\omega_c^2+\gamma_1^2+\gamma_2^2\bigr]}
       {\bigl(\gamma_1+\gamma_2-\tau_c(\omega_c^2-\gamma_1\gamma_2)\bigr)^2
        +\bigl(2\omega_c+\tau_c\omega_c(\gamma_1+\gamma_2)\bigr)^2}.
\label{eq:transversality}
\end{equation}
In particular, $\mathrm{Re}(d\mu/d\tau)|_{\tau=\tau_c}>0$.
\end{theorem}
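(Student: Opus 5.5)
We will evaluate the closed-form expression~\eqref{eq:dmu_dtau} at $\mu=i\omega_c$, $\tau=\tau_c$. The plan is to write it as a quotient $N/D$ and take the real part via $\mathrm{Re}(N/D)=\mathrm{Re}(N\bar D)/|D|^2$.

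Set $P:=(i\omega_c+\gamma_1)(i\omega_c+\gamma_2)=(\gamma_1\gamma_2-\omega_c^2)+i\omega_c(\gamma_1+\gamma_2)$. The numerator is then $N=-i\omega_c P$, which expands to
\[
N=\omega_c^2(\gamma_1+\gamma_2)+i\omega_c(\omega_c^2-\gamma_1\gamma_2).
\]
The denominator is $D=2i\omega_c+(\gamma_1+\gamma_2)+\tau_c P$. Its real part is $\gamma_1+\gamma_2-\tau_c(\omega_c^2-\gamma_1\gamma_2)$, and its imaginary part is $2\omega_c+\tau_c\omega_c(\gamma_1+\gamma_2)$. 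Hence $|D|^2$ is exactly the denominator displayed in~\eqref{eq:transversality}.

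For the numerator of the real part we compute $\mathrm{Re}(N\bar D)=\mathrm{Re}N\,\mathrm{Re}D+\mathrm{Im}N\,\mathrm{Im}D$. The terms proportional to $\tau_c$ should cancel:
\[
-\omega_c^2(\gamma_1+\gamma_2)\tau_c(\omega_c^2-\gamma_1\gamma_2)+\omega_c(\omega_c^2-\gamma_1\gamma_2)\,\tau_c\omega_c(\gamma_1+\gamma_2)=0.
\]
This cancellation reflects the identity $\mathrm{Re}(-i\omega_c P\,\overline{\tau_c P})=\mathrm{Re}(-i\omega_c\tau_c|P|^2)=0$. What remains is
\[
\omega_c^2(\gamma_1+\gamma_2)^2+2\omega_c^2(\omega_c^2-\gamma_1\gamma_2)=\omega_c^2\bigl(2\omega_c^2+\gamma_1^2+\gamma_2^2\bigr),
\]
which gives~\eqref{eq:transversality}. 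Cleanly isolating this cancellation is the only real bookkeeping point, and working with the complex form $N=-i\omega_c P$ makes it transparent.

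For positivity, the numerator is strictly positive because $\omega_c>0$ by Lemma~\ref{lem:omega} (we are in the regime $AB>\gamma_1\gamma_2$). The denominator is positive because its second summand $\bigl(2\omega_c+\tau_c\omega_c(\gamma_1+\gamma_2)\bigr)^2$ is strictly positive, since $\tau_c>0$. This also confirms $D\neq0$, which legitimises the implicit-function step giving~\eqref{eq:dmu_dtau}. In particular $i\omega_c$ is a simple root, because $\partial_\mu\Delta$ equals $D$ after the substitution $AB\,e^{-\mu\tau}=-P$. Therefore $\mathrm{Re}(d\mu/d\tau)|_{\tau_c}>0$. The same computation applies verbatim with $\tau_c$ replaced by any $\tau_c^{(k)}$, since no property of $\tau_c$ beyond positivity is used.
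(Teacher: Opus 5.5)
Your proposal is correct and follows the paper's proof essentially step for step. It uses the same $N/D$ split of the implicit-derivative formula at $\mu=i\omega_c$, the same real and imaginary parts, and the same cancellation of the $\tau_c$-terms in $\mathrm{Re}N\,\mathrm{Re}D+\mathrm{Im}N\,\mathrm{Im}D$. Your reading of that cancellation as $\mathrm{Re}(-i\omega_c\tau_c|P|^2)=0$, and your check that $D\neq 0$ (so $i\omega_c$ is a simple root and the implicit differentiation is legitimate), are small but welcome additions that the paper leaves implicit.
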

\begin{proof}
The numerator of~\eqref{eq:dmu_dtau} at $\mu=i\omega_c$ equals
\[
-i\omega_c(i\omega_c+\gamma_1)(i\omega_c+\gamma_2)
= -i\omega_c\bigl[(\gamma_1\gamma_2-\omega_c^2) +
                  i\omega_c(\gamma_1+\gamma_2)\bigr],
\]
which expands to
$\omega_c^2(\gamma_1+\gamma_2) + i\omega_c(\omega_c^2-\gamma_1\gamma_2)$,
so has real part $\omega_c^2(\gamma_1+\gamma_2)$ and imaginary part
$\omega_c(\omega_c^2-\gamma_1\gamma_2)$. The denominator
$D_{\rm denom} = 2i\omega_c+\gamma_1+\gamma_2 +
\tau_c(i\omega_c+\gamma_1)(i\omega_c+\gamma_2)$ has real part
$\gamma_1+\gamma_2 + \tau_c(\gamma_1\gamma_2-\omega_c^2)$ and
imaginary part $2\omega_c + \tau_c\omega_c(\gamma_1+\gamma_2)$.
Computing
$\mathrm{Re}(N/D_{\rm denom}) = (\mathrm{Re}\,N\cdot\mathrm{Re}\,D_{\rm denom}
+\mathrm{Im}\,N\cdot\mathrm{Im}\,D_{\rm denom})/|D_{\rm denom}|^2$
gives, after expansion,
\begin{align*}
\mathrm{Re}(N/D_{\rm denom})\cdot|D_{\rm denom}|^2
&=\, \omega_c^2(\gamma_1+\gamma_2)^2
\,+\, 2\omega_c^2(\omega_c^2-\gamma_1\gamma_2)\\
&\quad +\,\tau_c\,\omega_c^2(\omega_c^2-\gamma_1\gamma_2)(\gamma_1+\gamma_2)\\
&\quad -\,\tau_c\,\omega_c^2(\gamma_1+\gamma_2)(\omega_c^2-\gamma_1\gamma_2).
\end{align*}
The last two terms cancel identically, leaving
\[
\mathrm{Re}(N/D_{\rm denom})\cdot|D_{\rm denom}|^2
\,=\, \omega_c^2\bigl[(\gamma_1+\gamma_2)^2+2(\omega_c^2-\gamma_1\gamma_2)\bigr]
\,=\, \omega_c^2\bigl[2\omega_c^2+\gamma_1^2+\gamma_2^2\bigr],
\]
which proves~\eqref{eq:transversality}. The strict positivity follows from
$\omega_c>0$ and $\gamma_i>0$.
\end{proof}

\subsection{Hopf bifurcation theorem}

Combining Theorems~\ref{thm:hopf_existence} and~\ref{thm:transversality}
with the Hopf bifurcation theorem for delay-differential
equations~\cite[Theorem~11.1.1]{hale1993introduction} gives the main
result.

\begin{theorem}[Delay-driven Hopf bifurcation in the logistic two-gene oscillator]
\label{thm:hopf_main}
Assume $AB>\gamma_1\gamma_2$, where $A,B$ are
defined in~\eqref{eq:AB_def}.
Then the system~\eqref{eq:dde} undergoes a Hopf bifurcation at the
equilibrium $(x_1^*,x_2^*)$ as $\tau$ crosses the critical value
$\tau_c$ given by~\eqref{eq:tau_c}. More precisely:
\begin{enumerate}
  \item For $0\leq\tau<\tau_c$, the equilibrium is locally
        asymptotically stable; see
        Lemma~\ref{lem:stable_below_tauc}.
  \item At $\tau=\tau_c$ the linearisation has a simple pair of
        imaginary eigenvalues $\pm i\omega_c$
        (Theorem~\ref{thm:hopf_existence}),
        and the eigenvalue path crosses the imaginary axis
        transversally (Theorem~\ref{thm:transversality}).
  \item Consequently, by the Hopf bifurcation theorem for retarded
        functional differential
        equations~\cite[Theorem~11.1.1]{hale1993introduction}, there
        exists $\delta>0$ and a smooth family of non-constant periodic
        solutions of~\eqref{eq:dde} bifurcating from $(x_1^*,x_2^*)$ for
        $\tau\in(\tau_c,\tau_c+\delta)$, with frequency tending to
        $\omega_c$ and amplitude tending to zero as
        $\tau\downarrow\tau_c$. The equilibrium is unstable for
        $\tau\in(\tau_c,\tau_c+\delta)$ and remains unstable for all
        larger $\tau$ (Remark~\ref{rem:switches}).
\end{enumerate}
\end{theorem}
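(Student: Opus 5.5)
The proof assembles the hypotheses of the Hopf theorem for retarded functional differential equations \cite[Theorem~11.1.1]{hale1993introduction}, with $\tau$ as the bifurcation parameter. First we remove the dependence on the split $(\tau_1,\tau_2)$. By Proposition~\ref{prop:sum_symmetry} we may fix $\tau_1=\tau_2=\tau/2$, or more conveniently $\tau_1=0$, $\tau_2=\tau$. The rescaling $t=\tau s$ then moves $\tau$ from the delay into the vector field, so we have a one-parameter family of RFDEs on the fixed phase space $C([-1,0],\mathbb{R}^2)$. The right-hand side is $C^\infty$ in state and parameter, because the logistic functions are globally $C^\infty$. By Lemma~\ref{lem:eq_unique} the equilibrium $(x_1^*,x_2^*)$ exists for every $\tau$ and is independent of $\tau$, so it is a smooth branch of equilibria.

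Item~1 is exactly Lemma~\ref{lem:stable_below_tauc}. Linearised stability then gives local asymptotic stability of the nonlinear system, by the principle of linearised stability for RFDEs.

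For item~2, we must check that $\pm i\omega_c$ are \emph{simple} roots of $\Delta(\cdot;\tau_c)$ and that no other root lies on the imaginary axis. Nonresonance is immediate from Lemma~\ref{lem:omega}. The only positive frequency is $\omega_c$, and $\mu=0$ is not a root because $\Delta(0;\tau)=\gamma_1\gamma_2+AB>0$. Hence no $ik\omega_c$ with $k\neq\pm1$ is a root.

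Simplicity reduces to $\partial_\mu\Delta(i\omega_c;\tau_c)\neq0$. After substituting $AB e^{-\mu\tau}=-(\mu+\gamma_1)(\mu+\gamma_2)$, this derivative is the denominator $D_{\rm denom}$ appearing in the proof of Theorem~\ref{thm:transversality}. Its imaginary part is $2\omega_c+\tau_c\omega_c(\gamma_1+\gamma_2)>0$, so $D_{\rm denom}\neq 0$ and the root is simple. The same nonvanishing, through the implicit function theorem, gives a $C^1$ eigenvalue branch $\mu(\tau)$ with $\mu(\tau_c)=i\omega_c$. Formula~\eqref{eq:dmu_dtau} is then legitimate, and Theorem~\ref{thm:transversality} gives $\mathrm{Re}\,\mu'(\tau_c)>0$.

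For item~3 we invoke the Hopf theorem. It yields a family of periodic orbits with amplitude tending to $0$ and period tending to $2\pi/\omega_c$, and this transfers back from the rescaled time variable. One point needs care. The theorem by itself does not say whether the branch exists for $\tau>\tau_c$ or for $\tau<\tau_c$, since that is decided by the first Lyapunov coefficient. The plan is therefore either to state item~3 as existence on a one-sided neighbourhood of $\tau_c$, or to appeal to the supercriticality established later (Theorem~\ref{thm:supercritical}, and the numerically computed $\mathrm{Re}(c_1)<0$). I regard this as the main logical obstacle.

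Instability for $\tau\in(\tau_c,\tau_c+\delta)$ does follow unconditionally from transversality: $\mathrm{Re}\,\mu(\tau)>0$ just past $\tau_c$. Persistence of instability for all larger $\tau$ needs a root count. Every crossing occurs at some $\tau_c^{(k)}$, every crossing is strictly left-to-right by \eqref{eq:transversality_k}, and the roots depend continuously on $\tau$. Together these show that the number of roots in the right half-plane is nondecreasing in $\tau$, as in Remark~\ref{rem:switches}. Hence at least one unstable root remains for all $\tau>\tau_c$.
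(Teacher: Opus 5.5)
Your proposal follows the paper's route. The paper gives no separate proof: the theorem rests on the chain Lemma~\ref{lem:stable_below_tauc}, Theorems~\ref{thm:hopf_existence} and~\ref{thm:transversality}, Hale's Theorem~11.1.1, and Remark~\ref{rem:switches}. Your additions are correct, and they are what that citation actually needs:
\begin{itemize}
\item the rescaling $t=\tau s$, which puts $\tau$ into the vector field on the fixed phase space $C([-1,0],\mathbb{R}^2)$;
\item the nonresonance check;
\item simplicity of $i\omega_c$, via $\partial_\mu\Delta(i\omega_c;\tau_c)=D_{\rm denom}$ with $\mathrm{Im}\,D_{\rm denom}=2\omega_c+\tau_c\omega_c(\gamma_1+\gamma_2)>0$.
\end{itemize}
The paper asserts a ``simple pair'', but Theorem~\ref{thm:hopf_existence} says nothing about multiplicity. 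Simplicity is only verified later, for general $N$, through $\mathrm{Re}\,(d\mu/d\tau)^{-1}=S_1>0$ in Theorem~\ref{thm:n_gene_transversality}. Your root count for instability at all $\tau>\tau_c$ is sound. To close it, note that roots with $\mathrm{Re}\,\mu\geq 0$ satisfy $|\mu+\gamma_1|\,|\mu+\gamma_2|\leq AB$, so none can enter the closed right half-plane from infinity.

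The obstacle you flag is genuine, and the paper does not overcome it either. Hale's theorem gives a branch parametrised by amplitude $a$, with $\tau=\tau(a)\to\tau_c$ and period tending to $2\pi/\omega_c$. It says nothing about the sign of $\tau(a)-\tau_c$. The paper concedes this in Remark~\ref{rem:supercritical}, where the side $\tau>\tau_c$ is supported only by the numerical $\sqrt{\tau-\tau_c}$ fit; $\mathrm{Re}(c_1)\approx-1.27$ is extracted from that same fit, so it is not independent evidence.

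Your second option, appealing to Theorem~\ref{thm:supercritical}, settles item~3 as stated only at symmetric thresholds $\theta_i=M_i/2$. That excludes the canonical example, where $\theta_1=4\neq M_1/2$. For general thresholds, Theorem~\ref{thm:general_lyapunov} reduces the question to the sign of $\mathcal{T}$, and positivity of $\mathcal{T}$ is supported only by the Monte-Carlo sweep.

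So weakening item~3 is the correct rigorous option, with one refinement. Even a one-sided neighbourhood requires a non-degeneracy condition such as $\mathcal{T}\neq 0$, since otherwise the branch could be vertical. Without such a condition, the honest conclusion of item~3 is just the amplitude-parametrised family with $\tau(a)\to\tau_c$.
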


\subsection{Quantitative robustness of the Hopf bifurcation}
\label{sec:robustness}

Theorem~\ref{thm:transversality} establishes
$\mathrm{Re}(d\mu/d\tau)|_{\tau_c}>0$ but does not by itself say
\emph{how} positive: a transversality value close to zero would mean
that small parameter perturbations could move the eigenvalue
back across the imaginary axis, with the bifurcation losing
genericity. We now record an explicit lower bound, established
parameter-uniformly, that rules this out throughout the strong-feedback
regime. This robustness is the analytical content of the
``$\mathrm{Re}(d\mu/d\tau)>0$ generic'' claim in
Theorem~\ref{thm:hopf_main}.

\begin{proposition}[Lower bound on transversality]
\label{prop:transversality_bound}
For any parameter set $(\kappa_1,\kappa_2,\gamma_1,\gamma_2,\theta_1,
\theta_2,\lambda)$ satisfying the strong-feedback condition
$AB>\gamma_1\gamma_2$,
\begin{equation}
\mathrm{Re}\!\left(\frac{d\mu}{d\tau}\right)\!\bigg|_{\tau_c}
\,\geq\,
\frac{\omega_c^{2}\,(2\omega_c^{2}+\gamma_1^{2}+\gamma_2^{2})}
     {\bigl(\gamma_1+\gamma_2+\tau_c\,AB\bigr)^{2}
      \,+\,\bigl(2\omega_c+\tau_c\,\omega_c(\gamma_1+\gamma_2)\bigr)^{2}}
\,>\, 0.
\label{eq:trans_lower}
\end{equation}
In particular, for any compact subset $\mathcal{K}$ of the strong-feedback
parameter region, the transversality value is bounded below by a
positive constant $c_{\mathcal{K}}>0$, so the Hopf bifurcation persists
under small perturbations of the parameters within $\mathcal{K}$.
\end{proposition}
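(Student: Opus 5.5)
The plan is to compare the exact formula~\eqref{eq:transversality} of Theorem~\ref{thm:transversality} with the right-hand side of~\eqref{eq:trans_lower}. The two expressions have the same numerator $\omega_c^2(2\omega_c^2+\gamma_1^2+\gamma_2^2)>0$ and the same second denominator term $\bigl(2\omega_c+\tau_c\omega_c(\gamma_1+\gamma_2)\bigr)^2$. It therefore suffices to show
\[
\bigl(\gamma_1+\gamma_2-\tau_c(\omega_c^2-\gamma_1\gamma_2)\bigr)^2
\le \bigl(\gamma_1+\gamma_2+\tau_c AB\bigr)^2 .
\]
Since the bracket on the right is positive, this reduces to
\[
\bigl|\gamma_1+\gamma_2-\tau_c(\omega_c^2-\gamma_1\gamma_2)\bigr|\le \gamma_1+\gamma_2+\tau_c AB .
\]
By the triangle inequality the left side is at most $\gamma_1+\gamma_2+\tau_c|\omega_c^2-\gamma_1\gamma_2|$. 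The first relation in~\eqref{eq:cos_sin} gives $|\omega_c^2-\gamma_1\gamma_2| = AB\,|\cos(\omega_c\tau_c)|\le AB$; equivalently, this follows from~\eqref{eq:omega_eq}, since $(\omega_c^2-\gamma_1\gamma_2)^2\le (AB)^2$. Together with $\tau_c>0$, this proves~\eqref{eq:trans_lower}. Strict positivity of the bound is immediate from $\omega_c>0$, $\gamma_i>0$, and finiteness of the denominator.

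For the uniform statement on a compact set $\mathcal{K}$ of the strong-feedback region, I would argue by continuity.

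\textbf{Continuity of $A,B$.} The equilibrium $(x_1^*,x_2^*)$ depends continuously, indeed smoothly, on the parameters by the implicit function theorem. This uses $g'<0$ in Lemma~\ref{lem:eq_unique}. Hence $A$ and $B$ are continuous on $\mathcal{K}$.

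\textbf{Continuity of $\omega_c$.} The expression $p_+$ in~\eqref{eq:p_plus} is continuous and strictly positive on the region $AB>\gamma_1\gamma_2$, so $\omega_c=\sqrt{p_+}$ is continuous there.

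\textbf{Continuity of $\tau_c$.} This is the only delicate point, because $\operatorname{atan2}$ has a branch cut. However, the first argument $\omega_c(\gamma_1+\gamma_2)$ is strictly positive, so the pair $(x,y)$ stays in the open upper half-plane. On that half-plane $\operatorname{atan2}$ is continuous with values in $(0,\pi)$. Hence $\tau_c$ given by~\eqref{eq:tau_c} is continuous.

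The right-hand side of~\eqref{eq:trans_lower} is thus a continuous, strictly positive function on the compact set $\mathcal{K}$. It attains a positive minimum $c_{\mathcal{K}}$ there, and the exact transversality value is bounded below by it.

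Finally, persistence of the Hopf bifurcation under small perturbations within $\mathcal{K}$ follows from Theorem~\ref{thm:hopf_main}. Its hypotheses, namely $AB>\gamma_1\gamma_2$ and a nonzero crossing rate, hold at every point of $\mathcal{K}$, with the crossing rate bounded away from zero by $c_{\mathcal{K}}$.

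The main obstacle is only this continuity bookkeeping for $\tau_c$. The inequality itself is a one-line triangle-inequality estimate using~\eqref{eq:cos_sin}.
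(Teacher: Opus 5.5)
Your proof is correct and matches the paper's: you bound the real part of the denominator by the triangle inequality using $|\omega_c^2-\gamma_1\gamma_2| = AB\,|\cos(\omega_c\tau_c)|\le AB$, then use continuity and compactness to get $c_{\mathcal{K}}$. Where you differ is in the level of detail: you explicitly check that $\tau_c$ is continuous because the point fed to $\operatorname{atan2}$ stays in the open upper half-plane, which the paper only asserts, and the paper invokes the implicit function theorem to obtain a smooth Hopf curve $\tau_c(\cdot)$ where you instead apply Theorem~\ref{thm:hopf_main} at each point of $\mathcal{K}$.
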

\begin{proof}
The denominator in~\eqref{eq:transversality} has real part
$\gamma_1+\gamma_2-\tau_c(\omega_c^{2}-\gamma_1\gamma_2)$. By the
phase relation~\eqref{eq:cos_sin},
$\omega_c^{2}-\gamma_1\gamma_2 = AB\cos(\omega_c\tau_c)$, so
$|\omega_c^{2}-\gamma_1\gamma_2| = AB\,|\cos(\omega_c\tau_c)|\leq AB$
(the bound is saturated when $\omega_c\tau_c\downarrow 0$ in the
strong-feedback regime). The triangle inequality then gives
\begin{equation*}
\bigl|\gamma_1+\gamma_2-\tau_c(\omega_c^{2}-\gamma_1\gamma_2)\bigr|
\,\leq\, (\gamma_1+\gamma_2)
       \,+\, \tau_c\,|\omega_c^{2}-\gamma_1\gamma_2|
\,\leq\, \gamma_1+\gamma_2+\tau_c\,AB,
\end{equation*}
which holds regardless of the sign of $\omega_c^{2}-\gamma_1\gamma_2$
(the relevant cases are $\omega_c\tau_c\in(0,\pi/2)$, where the cosine
is positive, and $\omega_c\tau_c\in(\pi/2,\pi)$, where it is
negative). Squaring this inequality and substituting
into~\eqref{eq:transversality} (whose imaginary-part contribution
$2\omega_c+\tau_c\omega_c(\gamma_1+\gamma_2)$ is unchanged) increases
the denominator $|D_{\rm denom}|^{2}$ and hence decreases the ratio,
yielding the lower bound~\eqref{eq:trans_lower}. The
right-hand side is strictly positive because
$2\omega_c^{2}+\gamma_1^{2}+\gamma_2^{2}>0$ and $\omega_c^{2}>0$ in
the strong-feedback regime by Lemma~\ref{lem:omega}. The continuous
dependence of all quantities on the parameters together with
compactness of $\mathcal{K}$ gives the uniform bound
$c_{\mathcal{K}}>0$, and the implicit-function theorem applied to the
characteristic equation shows the existence of a smooth Hopf curve
$\tau_c(\cdot)$ on a neighbourhood of every parameter point in
$\mathcal{K}$.
\end{proof}

For the canonical parameter set the right-hand side
of~\eqref{eq:trans_lower} evaluates to $\approx 2.36$, compared with
the exact value $\approx 2.58$ of Theorem~\ref{thm:transversality}; the
bound therefore recovers about $91\%$ of the exact transversality and
is essentially tight near $\omega_c\tau_c\to 0$. Since the bound is
strictly positive on the entire strong-feedback region, the Hopf
bifurcation persists at every parameter configuration with
$AB>\gamma_1\gamma_2$, regardless of how close to the threshold, and is
quantitatively robust to perturbations of the parameters within any
compact subset of the strong-feedback region.

\begin{remark}[Sub- vs.\ super-criticality]
\label{rem:supercritical}
The Hopf bifurcation theorem in the form
of~\cite[Thm.~11.1.1]{hale1993introduction} guarantees the existence of a
local family of periodic solutions but does not by itself determine
whether the bifurcation is sub- or supercritical. Determining the sign of
the first Lyapunov coefficient requires either a centre-manifold reduction
of the DDE in the spirit of Hassard, Kazarinoff, and
Wan~\cite{hassard1981theory} or a direct numerical assessment of the
amplitude scaling. We adopt the latter approach in
Section~\ref{sec:numerical}, finding clean $\sqrt{\tau-\tau_c}$ scaling
that identifies the bifurcation as supercritical. The resulting limit
cycle is therefore stable on $\tau\in(\tau_c,\tau_c+\delta)$ for some
$\delta>0$, and amplitudes grow continuously from zero rather than jumping
to a finite value.
\end{remark}

\subsection{The canonical example}
\label{sec:canonical}

We now apply Theorems~\ref{thm:hopf_existence}--\ref{thm:hopf_main} to
the parameter set used throughout the companion
paper~\cite{belgacem2026logistic}: $\lambda=3$, $\kappa_1=3$,
$\gamma_1=0.25$, $\kappa_2=4$, $\gamma_2=0.5$, $\theta_1=4$, $\theta_2=3$.

The equilibrium computed in the companion paper is
$x_1^*\approx 3.873$ and $x_2^*\approx 3.247$, with the logistic
factors at equilibrium
\begin{align*}
f^+_* &= f^+(3.873,4,3) \approx 0.4059,\\
f^-_* &= f^-(3.247,3,3) \approx 0.3227.
\end{align*}
Substituting in~\eqref{eq:AB_def},
\begin{align*}
A &= 4\cdot 3\cdot 0.4059\cdot(1-0.4059) \approx 2.894,\\
B &= 3\cdot 3\cdot 0.3227\cdot(1-0.3227) \approx 1.967,
\end{align*}
so $AB\approx 5.693$ and $\gamma_1\gamma_2=0.125$. The condition
$AB>\gamma_1\gamma_2$ is satisfied with a comfortable margin (loop gain
about $46$ times threshold), so a Hopf bifurcation must occur at some
$\tau_c>0$.

The Hopf frequency is computed from~\eqref{eq:p_plus},
\begin{align*}
p_+ &= \frac{-(0.0625+0.25)+\sqrt{(0.0625-0.25)^2+4(5.693)^2}}{2}
\approx 5.537,\\
\omega_c &= \sqrt{p_+}\;\approx\; 2.353.
\end{align*}
The sine and cosine in~\eqref{eq:cos_sin} are
$\cos(\omega_c\tau_c) = (5.537-0.125)/5.693 = 0.951$ and
$\sin(\omega_c\tau_c) = 2.353\cdot 0.75/5.693 = 0.310$, both positive, so
$\omega_c\tau_c\in(0,\pi/2)$. The critical delay is
\[
\tau_c
= \frac{1}{2.353}\arctan(0.310/0.951)\approx \frac{0.315}{2.353} \approx 0.134.
\]
The first secondary Hopf points are at
$\tau_c^{(1)} = \tau_c+2\pi/\omega_c\approx 2.804$ and
$\tau_c^{(2)}\approx 5.474$, beyond which the linearisation has multiple
unstable pairs of complex roots.

%%%%%%%%%%%%%%%%%%%%%%%%%%%%%%%%%%%%%%%%%%%%%%%%%%%%%%%%%%%%%%%%%%%%%%%%%%%%%%%%
\section{Critical steepness and global persistence of the Hopf branch}
\label{sec:param_global}

Sections~\ref{sec:hopf}--\ref{sec:transversality} established the
local delay-driven Hopf bifurcation for a fixed parameter set
satisfying the strong-feedback condition $AB>\gamma_1\gamma_2$. Two
further questions are of independent interest and are addressed here.
First, how does the bifurcation depend on the logistic steepness
$\lambda$, the parameter that interpolates between the shallow
near-linear regime and the sharp Heaviside limit
(Section~\ref{sec:lambda_critical})? Second, how far does the
bifurcating branch of periodic orbits extend beyond the local
neighbourhood of $\tau_c$ guaranteed by Theorem~\ref{thm:hopf_main}
(Section~\ref{sec:global_hopf})?

\subsection{Critical steepness $\lambda_c$ and the dependence of $\tau_c$ on $\lambda$}
\label{sec:lambda_critical}

The loop gain $AB$ depends on the steepness parameter $\lambda$ both
explicitly through the prefactor $\lambda^2$ in
\[
AB(\lambda) = \lambda^2\,\kappa_1\kappa_2\,
              f^-_*(1-f^-_*)\,f^+_*(1-f^+_*),
\]
and implicitly through the equilibrium $(x_1^*(\lambda),x_2^*(\lambda))$
on which $f^\pm_*$ are evaluated. As $\lambda\to 0^+$ the equilibrium
tends to the maximum-rate fixed point
$(x_1^*,x_2^*)\to(\kappa_1/(2\gamma_1),\kappa_2/(2\gamma_2))$ (each
logistic factor approaches $1/2$), and $AB$ scales as $\lambda^2\cdot
\kappa_1\kappa_2/16\to 0$, so $AB(\lambda)<\gamma_1\gamma_2$ for
sufficiently small $\lambda$ regardless of the other parameters. As
$\lambda$ grows, $AB$ grows (super-quadratically once the equilibrium has
settled), so there exists a unique threshold $\lambda_c>0$ at which
$AB(\lambda_c)=\gamma_1\gamma_2$.

Before stating the result we record a closed-form asymptotic for the
loop gain $AB(\lambda)$ that pins down its quadratic growth in
$\lambda$ in both the shallow ($\lambda\to 0^+$) and steep
($\lambda\to\infty$) limits. With $M_i:=\kappa_i/\gamma_i$ the
$i$-th carrying capacity, define
\begin{equation}
c_0 \,:=\, \frac{\kappa_1\kappa_2}{16},
\qquad
c_\infty \,:=\,
\frac{(\gamma_1\gamma_2)^2}{\kappa_1\kappa_2}\,
\theta_1(M_1-\theta_1)\,\theta_2(M_2-\theta_2).
\label{eq:cinfty_def}
\end{equation}

\begin{proposition}[Loop-gain asymptotics in $\lambda$]
\label{prop:AB_asymptotics}
Assume $0<\theta_i<M_i$, $i=1,2$ (the equilibrium lies in the interior of
the absorbing box $\mathcal{B}$). Then
\begin{equation}
\frac{AB(\lambda)}{\lambda^2}\,\longrightarrow\, c_0
\quad\text{as }\lambda\to 0^+,
\qquad
\frac{AB(\lambda)}{\lambda^2}\,\longrightarrow\, c_\infty
\quad\text{as }\lambda\to\infty,
\label{eq:AB_limits}
\end{equation}
with $c_0,c_\infty\in(0,\infty)$. Moreover $c_\infty\leq c_0$, with
equality iff $\theta_i=M_i/2$ for both $i=1,2$ (symmetric case).
\end{proposition}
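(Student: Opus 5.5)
The plan is to write the loop gain as
\[
\frac{AB(\lambda)}{\lambda^2}
= \kappa_1\kappa_2\, f^+_*(1-f^+_*)\, f^-_*(1-f^-_*),
\]
which follows directly from~\eqref{eq:AB_def}. The task is then to identify the limits of the two logistic factors at the equilibrium, using only Lemma~\ref{lem:eq_unique} and the equilibrium equations~\eqref{eq:eq_pt}. The key observation is that~\eqref{eq:eq_pt} gives the \emph{exact} identities
\[
f^+_* = \frac{x_2^*}{M_2},
\qquad
f^-_* = \frac{x_1^*}{M_1},
\qquad
M_i = \frac{\kappa_i}{\gamma_i}.
\]
So it suffices to know where $(x_1^*(\lambda),x_2^*(\lambda))$ goes as $\lambda\to 0^+$ or $\lambda\to\infty$. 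No rate information on $\lambda(x_i^*-\theta_i)$ is needed.

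\textbf{Shallow limit.} By Lemma~\ref{lem:eq_unique}, $x^*(\lambda)$ lies in the bounded box $\mathcal{B}$. Hence $|\lambda(x_i^*-\theta_i)|\le \lambda\,(M_i+|\theta_i|)\to 0$. Both logistic factors therefore tend to $1/2$, each product $f(1-f)$ tends to $1/4$, and $AB/\lambda^2\to\kappa_1\kappa_2/16=c_0$.

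\textbf{Steep limit.} This is the main step. I will show $x_i^*(\lambda)\to\theta_i$ by a subsequence and contradiction argument in the compact box. Take any sequence $\lambda_n\to\infty$ along which $x^*$ converges to some $(\bar x_1,\bar x_2)\in\mathcal{B}$, and suppose $\bar x_1>\theta_1$.
\begin{itemize}
\item Then $\lambda_n(x_1^*-\theta_1)\to+\infty$, so $f^+_*\to 1$ and $x_2^*=M_2f^+_*\to M_2>\theta_2$.
\item Consequently $f^-_*\to 0$ and $x_1^*=M_1 f^-_*\to 0<\theta_1$, which contradicts $\bar x_1>\theta_1$.
\end{itemize}
The case $\bar x_1<\theta_1$ is symmetric: it forces $x_2^*\to 0<\theta_2$ and then $x_1^*\to M_1>\theta_1$. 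Hence $\bar x_1=\theta_1$.

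Next, if $\bar x_2>\theta_2$, then $f^-_*\to 0$ and so $x_1^*\to 0\neq\theta_1$, a contradiction; similarly $\bar x_2<\theta_2$ forces $x_1^*\to M_1\ne\theta_1$. Hence $\bar x_2=\theta_2$. Both contradictions use the hypothesis $0<\theta_i<M_i$. Since every limit point equals $(\theta_1,\theta_2)$, the whole family converges.

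The exact identities then give $f^+_*\to\theta_2/M_2$ and $f^-_*\to\theta_1/M_1$. Therefore
\[
\frac{AB}{\lambda^2}\;\to\;\kappa_1\kappa_2\,\frac{\theta_1(M_1-\theta_1)\,\theta_2(M_2-\theta_2)}{M_1^2M_2^2},
\]
which simplifies to $c_\infty$ after substituting $M_i=\kappa_i/\gamma_i$.

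\textbf{Positivity and comparison.} Both constants are finite. Positivity of $c_0$ is immediate, and $c_\infty>0$ follows from $0<\theta_i<M_i$. For the comparison, write
\[
c_\infty=\kappa_1\kappa_2\,t_1(1-t_1)\,t_2(1-t_2),
\qquad t_i=\frac{\theta_i}{M_i}\in(0,1).
\]
Since $t(1-t)\le 1/4$ with equality iff $t=1/2$, we get $c_\infty\le c_0=\kappa_1\kappa_2/16$. Because both factors are positive, equality holds iff $t_1=t_2=1/2$, that is, $\theta_i=M_i/2$ for $i=1,2$.

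The only delicate point is the steep-limit convergence $x^*\to\theta$. The argument above avoids any quantitative estimate by exploiting the self-consistency of the equilibrium.
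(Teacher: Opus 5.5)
Your proof is correct, and its skeleton matches the paper's: logistic factors tend to $1/2$ in the shallow limit, the equilibrium tends to $(\theta_1,\theta_2)$ in the steep limit, and the comparison comes from $t(1-t)\le 1/4$. The steep-limit step, however, is argued differently and more carefully than in the paper.

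The paper says that, at any cluster point, the equilibrium relations ``force'' $f^-(x_2^*)\to\theta_1/M_1$ and $f^+(x_1^*)\to\theta_2/M_2$. As written, this presupposes the convergence $x_i^*\to\theta_i$ that is being proved. At a general cluster point $(\bar x_1,\bar x_2)$ the relations only yield the limits $\bar x_1/M_1$ and $\bar x_2/M_2$. Boundary cluster points (e.g.\ $\bar x_1\in\{0,M_1\}$) are also never ruled out. The paper then reads off the limits of $f^\pm_*$ through the explicit offsets $\xi_1,\xi_2$.

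Your four-case contradiction argument supplies exactly the missing step. It excludes every cluster point other than $(\theta_1,\theta_2)$, boundary ones included, and it shows where each of the hypotheses $0<\theta_i$ and $\theta_i<M_i$ is used. Your exact identities $f^+_*=x_2^*/M_2$ and $f^-_*=x_1^*/M_1$ then give the limiting logistic values with no rate information at all.

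What the paper's route adds is the quantitative refinement $x_i^*=\theta_i+\xi_i/\lambda+o(1/\lambda)$, which is reused in Remark~\ref{rem:heaviside}. This follows in one line from your result: inverting $f^+(x_1^*)\to\theta_2/M_2\in(0,1)$ gives $\lambda(x_1^*-\theta_1)\to\ln[\theta_2/(M_2-\theta_2)]$, and likewise for $\xi_2$.

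Your shallow-limit bound $|\lambda(x_i^*-\theta_i)|\le\lambda(M_i+|\theta_i|)$ is also the clean way to make precise the paper's appeal to pointwise convergence of the logistic factors, since $x_i^*$ itself moves with $\lambda$.
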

\begin{proof}
As $\lambda\to 0^+$, every logistic factor approaches the value
$f^\pm(\,\cdot\,;\theta,\lambda)\to 1/2$ pointwise, the equilibrium tends
to $(M_1/2,M_2/2)$, and $f^\pm_*(1-f^\pm_*)\to 1/4$. From~\eqref{eq:AB_def}
this gives $AB/\lambda^2 \to \kappa_1\kappa_2/16 = c_0$.

As $\lambda\to\infty$, the equilibrium tends to the threshold pair
$(\theta_1,\theta_2)$: indeed, for any cluster point of
$(x_1^*(\lambda),x_2^*(\lambda))$ as $\lambda\to\infty$,
the equilibrium relations
$x_1^* = M_1 f^-(x_2^*;\theta_2,\lambda)$ and
$x_2^* = M_2 f^+(x_1^*;\theta_1,\lambda)$ force
$f^-(x_2^*)\to\theta_1/M_1\in(0,1)$ and $f^+(x_1^*)\to\theta_2/M_2\in(0,1)$,
which is possible only if $\lambda(x_2^*-\theta_2)$ and
$\lambda(x_1^*-\theta_1)$ remain bounded, hence $x_i^*\to\theta_i$.
Solving the limiting relations gives the asymptotic offsets
\begin{equation}
\xi_1 \,:=\, \lim_{\lambda\to\infty}\lambda(x_1^*-\theta_1)
\,=\, \ln\!\frac{\theta_2}{M_2-\theta_2},
\quad
\xi_2 \,:=\, \lim_{\lambda\to\infty}\lambda(x_2^*-\theta_2)
\,=\, \ln\!\frac{M_1-\theta_1}{\theta_1},
\label{eq:xi_limits}
\end{equation}
so that $f^+_*\to\theta_2/M_2$ and $f^-_*\to\theta_1/M_1$, and
\begin{equation}
\lim_{\lambda\to\infty}f^+_*(1-f^+_*) f^-_*(1-f^-_*)
=\frac{\theta_1(M_1-\theta_1)\theta_2(M_2-\theta_2)}{M_1^2 M_2^2}.
\label{eq:limit_product}
\end{equation}
Substituting~\eqref{eq:limit_product} into~\eqref{eq:AB_def} yields
$AB/\lambda^2 \to c_\infty$ as displayed. The inequality
$c_\infty\leq c_0$ is the AM--GM bound
$\theta_i(M_i-\theta_i)\leq M_i^2/4$, with equality iff $\theta_i=M_i/2$.
\end{proof}

For canonical parameters $(\kappa_1,\kappa_2,\gamma_1,\gamma_2,\theta_1,
\theta_2)=(3,4,0.25,0.5,4,3)$ one obtains $c_0=12/16=0.75$ and
$c_\infty=(\gamma_1\gamma_2)^2\,\theta_1(M_1{-}\theta_1)\,
\theta_2(M_2{-}\theta_2)/(\kappa_1\kappa_2)
=(1/64)\cdot 32\cdot 15/12=0.625$, in agreement with the
last column of Table~\ref{tab:lambda_sweep} (e.g.\ $AB/\lambda^2\approx
0.625$ at $\lambda=10$). In particular $AB(\lambda)$ is unbounded as
$\lambda\to\infty$ (it grows like $0.625\lambda^2$), and the smooth
equilibrium remains well-defined at every finite $\lambda$ even though
its position approaches the thresholds.

\begin{theorem}[Critical steepness for delay-induced oscillations]
\label{thm:lambda_critical}
For each fixed parameter set $(\kappa_1,\kappa_2,\gamma_1,\gamma_2,
\theta_1,\theta_2)$ with $0<\theta_i<M_i$ ($i=1,2$), the loop gain
$\lambda\mapsto AB(\lambda)$ is continuous on $(0,\infty)$, vanishes as
$\lambda\to 0^+$, and tends to $+\infty$ as $\lambda\to\infty$. There
exists at least one critical steepness $\lambda_c>0$ such that
$AB(\lambda_c)=\gamma_1\gamma_2$, and the smallest such $\lambda_c$
satisfies
\begin{equation}
\lambda_c \,\geq\, \frac{4\sqrt{\gamma_1\gamma_2}}
                       {\sqrt{\kappa_1\kappa_2}},
\label{eq:lambda_c_lower}
\end{equation}
i.e.\ delay-induced oscillations cannot occur below the lower bound
on the right of~\eqref{eq:lambda_c_lower}, regardless of thresholds.
The associated stability picture is:
\begin{enumerate}
  \item For $\lambda<\lambda_c$ \emph{(weak feedback)},
        $AB(\lambda)<\gamma_1\gamma_2$ and the equilibrium of~\eqref{eq:dde}
        is asymptotically stable for every $\tau\geq 0$ (absolute
        stability, Lemma~\ref{lem:omega} and its corollary).
  \item At $\lambda=\lambda_c$, $AB(\lambda_c)=\gamma_1\gamma_2$, the
        Hopf frequency degenerates, $\omega_c\to 0^+$, and
        $\tau_c\to\infty$: the imaginary-axis crossing collapses to the
        origin in the limit and the bifurcation point recedes to
        infinite delay (the equilibrium itself remains asymptotically
        stable at $\lambda=\lambda_c$, by the corollary to
        Lemma~\ref{lem:omega}).
  \item For $\lambda$ in any open subinterval of $(0,\infty)$ on which
        $AB(\lambda)>\gamma_1\gamma_2$, a delay-induced Hopf bifurcation
        occurs at $\tau=\tau_c(\lambda)$ given by~\eqref{eq:tau_c}.
\end{enumerate}
If, in addition, $AB(\lambda)$ is strictly monotonically increasing in
$\lambda$ on $(0,\infty)$, then $\lambda_c$ is unique and the parameter
range admitting delay-induced Hopf is exactly $\lambda>\lambda_c$. We
verify this monotonicity numerically for the canonical parameter family
in Table~\ref{tab:lambda_sweep}, and it holds generically because the
prefactor $\lambda^2$ in $AB$ dominates the slow ($O(1)$) variation of
$f^\pm_*(1-f^\pm_*)$ across the threshold-saturation crossover.
\end{theorem}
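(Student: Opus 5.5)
The plan is to treat $AB$ as a function of $\lambda$ through the equilibrium map and to combine three ingredients: continuity, the two limits from Proposition~\ref{prop:AB_asymptotics}, and the intermediate value theorem. Then the stability picture follows from Lemma~\ref{lem:omega}, its corollary, and Theorem~\ref{thm:hopf_main}.

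\textbf{Continuity.} The equilibrium is the unique zero of the function $g(x_1;\lambda)$ from the proof of Lemma~\ref{lem:eq_unique}. This $g$ is $C^\infty$ jointly in $(x_1,\lambda)$, and $\partial_{x_1}g<0$. The implicit function theorem therefore makes $\lambda\mapsto x_1^*(\lambda)$ smooth on $(0,\infty)$, hence also $x_2^*=M_2f^+(x_1^*)$. By \eqref{eq:AB_def}, $AB(\lambda)$ is then continuous, indeed smooth.

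\textbf{Limits and existence of $\lambda_c$.} Proposition~\ref{prop:AB_asymptotics} gives $AB/\lambda^2\to c_0$ as $\lambda\to0^+$, so $AB\to0$. It also gives $AB/\lambda^2\to c_\infty>0$ as $\lambda\to\infty$, so $AB\to+\infty$. Consequently $AB<\gamma_1\gamma_2$ near $0$ and $AB>\gamma_1\gamma_2$ for large $\lambda$. The intermediate value theorem then yields a root of $AB(\lambda)=\gamma_1\gamma_2$. The zero set is closed in $(0,\infty)$ and bounded away from $0$, so $\lambda_c:=\inf$ of that set is attained.

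\textbf{Lower bound.} Since $f(1-f)\le1/4$ pointwise, we have $AB(\lambda)\le\lambda^2\kappa_1\kappa_2/16$ for every $\lambda$. At $\lambda_c$ this gives $\gamma_1\gamma_2\le\lambda_c^2\kappa_1\kappa_2/16$, which is \eqref{eq:lambda_c_lower}. The same bound shows $AB<\gamma_1\gamma_2$ for all $\lambda$ below the right-hand side, so no imaginary roots exist there, by Lemma~\ref{lem:omega}.

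\textbf{Stability picture.} Item 1 holds because $\lambda_c$ is the smallest crossing: by continuity, $AB<\gamma_1\gamma_2$ on $(0,\lambda_c)$, and the corollary to Lemma~\ref{lem:omega} applies. For item 2, equation \eqref{eq:p_plus} shows that $p_+$ is continuous in $AB$ and vanishes when $AB=\gamma_1\gamma_2$, since then $\sqrt{(\gamma_1^2-\gamma_2^2)^2+4\gamma_1^2\gamma_2^2}=\gamma_1^2+\gamma_2^2$. Hence $\omega_c\to0^+$ as $AB\downarrow\gamma_1\gamma_2$. By \eqref{eq:cos_sin}, $\cos(\omega_c\tau_c)\to-\gamma_1\gamma_2/AB=-1$, so $\omega_c\tau_c\to\pi$ and $\tau_c\approx\pi/\omega_c\to\infty$. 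Stability at $\lambda_c$ itself is again the corollary. Item 3 is a direct application of Theorem~\ref{thm:hopf_main} at each fixed $\lambda$ in such an interval.

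\textbf{Uniqueness under monotonicity.} If $AB$ is strictly increasing, the level set is a single point, and $AB>\gamma_1\gamma_2$ holds exactly for $\lambda>\lambda_c$. The final numerical and heuristic sentence is not part of the proof.

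The only step needing care is item 2. One must justify $\omega_c\tau_c\to\pi$ rather than $0$: near threshold $\omega_c^2-\gamma_1\gamma_2<0$, so the branch of $\operatorname{atan2}$ must be tracked. One must also confirm that the limit is read as $\lambda\downarrow\lambda_c$ along values with $AB>\gamma_1\gamma_2$. Such values exist just above $\lambda_c$ only if $AB$ actually crosses the threshold rather than touching it. If it merely touches, the statement is to be read along any sequence with $AB\downarrow\gamma_1\gamma_2$.
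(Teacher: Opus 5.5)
Your proposal is correct and follows essentially the same route as the paper: continuity of $AB(\lambda)$ via the implicit function theorem applied to $g(x_1;\lambda)$, the limits of Proposition~\ref{prop:AB_asymptotics} combined with the intermediate value theorem, the bound $f^\pm_*(1-f^\pm_*)\le 1/4$ for \eqref{eq:lambda_c_lower}, and pointwise application of the two-gene Hopf results. Your additional care fills in details the paper leaves implicit: you show that the smallest root is attained, you track the $\operatorname{atan2}$ branch to obtain $\omega_c\tau_c\to\pi$ (hence $\tau_c\to\infty$) in item~2, and you cite Theorem~\ref{thm:hopf_main}, which includes transversality, rather than Theorem~\ref{thm:hopf_existence} alone for item~3.
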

\begin{proof}
Continuity of $AB(\lambda)$ follows from the continuity of the
equilibrium in $\lambda$ (implicit function theorem applied to $g(x_1;
\lambda)=0$ in Lemma~\ref{lem:eq_unique}). The shallow- and steep-limit
asymptotics $AB/\lambda^2\to c_0,c_\infty$ from
Proposition~\ref{prop:AB_asymptotics} give $AB(\lambda)\to 0$ as
$\lambda\to 0^+$ and $AB(\lambda)\to\infty$ as $\lambda\to\infty$, so
the intermediate-value theorem yields at least one
$\lambda_c>0$ with $AB(\lambda_c)=\gamma_1\gamma_2$. The lower
bound~\eqref{eq:lambda_c_lower} follows from the universal pointwise
estimate $f^\pm_*(1-f^\pm_*)\leq 1/4$:
$AB(\lambda)\leq \kappa_1\kappa_2\lambda^2/16$, so the equation
$AB(\lambda_c)=\gamma_1\gamma_2$ forces
$\lambda_c^2\geq 16\gamma_1\gamma_2/(\kappa_1\kappa_2)$.
Statements (1)--(3) follow from Theorem~\ref{thm:hopf_existence} applied
pointwise in $\lambda$. Uniqueness of $\lambda_c$ under the additional
monotonicity hypothesis is immediate.
\end{proof}

\begin{table}
\centering
\caption{Dependence of the loop gain, Hopf frequency, and critical
delay on the logistic steepness $\lambda$, with the other canonical
parameters fixed at $\kappa_1=3,\gamma_1=0.25,\kappa_2=4,\gamma_2=0.50,
\theta_1=4,\theta_2=3$. The bold row $\lambda_c\approx 0.426$ records
the asymptotic behaviour as $\lambda\downarrow\lambda_c$
(loop-gain threshold $AB\to\gamma_1\gamma_2$, where $\omega_c\to 0^+$
and $\tau_c\to\infty$ as the imaginary-axis crossing collapses to the
origin in the limit); the bold row $\lambda=3$ is the canonical value
used throughout the paper.}
\label{tab:lambda_sweep}
\begin{tabular}{cccccc}
\toprule
$\lambda$ & $AB$ & $AB-\gamma_1\gamma_2$ & $\omega_c$ & $\tau_c$ & $T_c$ \\
\midrule
$0.10$ & $0.007$ & $-0.118$ & --- & --- (no Hopf) & --- \\
$0.30$ & $0.064$ & $-0.061$ & --- & --- (no Hopf) & --- \\
$\boldsymbol{\lambda_c\approx 0.426}$ & $\boldsymbol{\gamma_1\gamma_2}$ & $\boldsymbol{0}$ & $\to 0^+$ & $\to\infty$ & $\to\infty$ \\
$0.50$ & $0.171$ & $+0.046$ & $0.196$ & $10.75$ & $32.09$ \\
$0.70$ & $0.326$ & $+0.201$ & $0.428$ & $3.25$ & $14.67$ \\
$1.00$ & $0.653$ & $+0.528$ & $0.709$ & $1.34$ & $8.86$ \\
$1.50$ & $1.445$ & $+1.320$ & $1.137$ & $0.555$ & $5.53$ \\
$2.00$ & $2.549$ & $+2.424$ & $1.547$ & $0.306$ & $4.06$ \\
$2.50$ & $3.965$ & $+3.840$ & $1.952$ & $0.194$ & $3.22$ \\
\textbf{3.00} & \textbf{5.693} & \textbf{+5.568} & \textbf{2.353} & \textbf{0.134} & \textbf{2.67} \\
$5.00$ & $15.73$ & $+15.60$ & $3.946$ & $0.048$ & $1.59$ \\
$10.0$ & $62.69$ & $+62.57$ & $7.908$ & $0.012$ & $0.795$ \\
\bottomrule
\end{tabular}
\end{table}

For the canonical parameters Theorem~\ref{thm:lambda_critical} gives
$\lambda_c\approx 0.426$, computed by bisection from
$AB(\lambda_c)=\gamma_1\gamma_2$. Steeper feedback therefore makes the
system both more prone to oscillation (smaller $\tau_c$) and faster
($T_c\downarrow$); biologically, sharper Hill-like saturation reduces
the transcriptional delay required to destabilise the equilibrium.

\begin{remark}[The Heaviside limit]
\label{rem:heaviside}
As $\lambda\to\infty$, the logistic factors approach the Heaviside step
function and the smooth equilibrium converges to the threshold corner
$(\theta_1,\theta_2)$, as proved in Proposition~\ref{prop:AB_asymptotics}
with explicit asymptotic offsets $\xi_1,\xi_2$ given by
equation~\eqref{eq:xi_limits}: $\lambda(x_1^{*}-\theta_1)\to\xi_1$ and
$\lambda(x_2^{*}-\theta_2)\to\xi_2$, with both $\xi_i$ finite and
non-zero. The smooth equilibrium therefore exists for every finite
$\lambda$ and approaches the discontinuity locus of the Heaviside
limit. In the strict $\lambda=\infty$ Heaviside limit the equilibrium
\emph{does not exist} as a regular fixed point of the discontinuous
right-hand side: the only candidate values are
$(0,0),(0,M_2),(M_1,0),(M_1,M_2)$ at the corners of the box, none of
which is self-consistent with the negative-feedback assumption when
$\theta_i\in(0,M_i)$. The smooth equilibrium is replaced in this limit
by Glass-network-style switching dynamics with no single steady state,
analogous to the ring-circuit attractors studied
in~\cite{belgacem2025computer,farcot2019chaos}. The smooth-logistic
analysis here can be viewed as a regularisation of these
discontinuous dynamics; the probabilistic-convolution regularisation
of~\cite{belgacem2019probabilistic} offers a complementary route via
hybrid-system theory rather than through the steepness parameter
$\lambda$. The closed-form analysis of the present paper applies for
every finite $\lambda$, with the loop gain
$AB(\lambda)\sim c_\infty\lambda^2$ growing without bound and
$\tau_c(\lambda)\to 0$ as $\lambda\to\infty$
(Table~\ref{tab:lambda_sweep}).
\end{remark}

\subsection{Global persistence of the Hopf branch}
\label{sec:global_hopf}

The local Hopf bifurcation theorem (Theorem~\ref{thm:hopf_main})
guarantees a smooth branch of small-amplitude periodic solutions on
$\tau\in(\tau_c,\tau_c+\delta)$ for some $\delta>0$ but is silent on
how far this branch extends. Wu's global Hopf bifurcation
theorem~\cite{wu1998symmetric} for retarded functional differential
equations turns the local result into a structural one: combined with
the dissipativity of Section~\ref{sec:existence}, it forces the
periodic-orbit branch to persist either to $\tau=\infty$ or until it
reaches another bifurcation point of the equilibrium.

\begin{theorem}[Global persistence of the Hopf branch]
\label{thm:global_hopf}
Assume the strong-feedback condition $AB>\gamma_1\gamma_2$. Let
$\Sigma\subset\mathbb{R}_{+}\times C\bigl([-\tau_{\max},0],\mathbb{R}^{2}\bigr)$
denote the closure of the set of pairs $(\tau,x_t)$ for which $x_t$ is
the initial segment of a non-constant periodic solution
of~\eqref{eq:dde}. Let $\Sigma_0\subseteq\Sigma$ denote the connected
component containing the local Hopf branch born at $\tau=\tau_c$. Then
at least one of the following holds:
\begin{enumerate}
  \item[\textnormal{(I)}] $\Sigma_0$ is unbounded, with $\tau$
        ranging over the entire half-line $[\tau_c,\infty)$;
  \item[\textnormal{(II)}] $\Sigma_0$ contains the equilibrium pair
        $(\tau_c^{(k)},\mathbf{x}^{*})$ for some integer
        $k\geq 1$, where $\tau_c^{(k)}$ is one of the higher critical
        delays of~\eqref{eq:secondary_tau};
  \item[\textnormal{(III)}] the periods of the orbits in $\Sigma_0$
        approach $0$ or $\infty$ along the branch.
\end{enumerate}
Moreover, the amplitudes of orbits in $\Sigma_0$ are uniformly bounded
in the sup norm by the absorbing-set radius
$\max(\kappa_1/\gamma_1,\kappa_2/\gamma_2)$.
\end{theorem}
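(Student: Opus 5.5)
The plan is to verify the hypotheses of Wu's global Hopf bifurcation theorem~\cite{wu1998symmetric} for~\eqref{eq:dde}, viewed as a one-parameter family of retarded functional differential equations in $\tau$. By Proposition~\ref{prop:sum_symmetry} we may fix the splitting $\tau_1=\tau_2=\tau/2$ without loss of generality. To work on a fixed phase space, we would rescale time by $t=\tau s$ when $\tau>0$, so that the delays become $1/2$ and $\tau$ enters only as a smooth multiplicative parameter in the right-hand side. Alternatively, we could work on $C([-\tau_{\max},0],\mathbb{R}^2)$ with $\tau_{\max}$ large.

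The hypotheses to check are the following.
\begin{enumerate}
\item[(a)] The right-hand side is $C^2$ (indeed $C^\infty$) in state and parameter. This holds because the logistic functions are $C^\infty$ (Section~\ref{sec:existence}).
\item[(b)] The equilibrium $\mathbf{x}^*$ is independent of $\tau$ and unique (Lemma~\ref{lem:eq_unique}), and $\Delta(0;\tau)=\gamma_1\gamma_2+AB\neq 0$. Hence the stationary branch is isolated and nondegenerate, and $0$ is never a characteristic root.
\item[(c)] The centres $(\tau_c^{(k)},\mathbf{x}^*,2\pi/\omega_c)$ are isolated, and each is the only place where a root lies on the imaginary axis. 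This follows from Theorem~\ref{thm:hopf_existence}(ii), since $\omega_c$ is the unique admissible frequency.
\item[(d)] The crossing number at each centre is nonzero. This follows from Theorem~\ref{thm:transversality} and~\eqref{eq:transversality_k}, which give $\mathrm{Re}(d\mu/d\tau)>0$ at every $\tau_c^{(k)}$. We also need simplicity of $i\omega_c$, obtained by checking $\partial_\mu\Delta(i\omega_c;\tau_c^{(k)})\neq 0$. The denominator of~\eqref{eq:dmu_dtau} has imaginary part $2\omega_c+\tau\omega_c(\gamma_1+\gamma_2)>0$, so it never vanishes.
\end{enumerate}

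Wu's theorem then gives the alternative that $\Sigma_0$ is either unbounded in $\mathbb{R}\times C\times\mathbb{R}_+$ (parameter, state, period) or returns to a finite number of centres with zero total crossing number. Since every crossing number has the same sign, with each a nonzero positive jump in the count of unstable roots, a bounded branch returning only to centres cannot have zero sum unless it meets another centre. This is how we read off alternative~(II). The precise bookkeeping, whether the sum vanishes or merely a second centre is met, follows Wu's statement; we would phrase~(II) as ``meets another centre.''

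For the unbounded case, we split unboundedness into three components: in $\tau$, in the sup norm of $x_t$, or in the period. Proposition~\ref{prop:dissipative} bounds the state component. A periodic orbit lies in its own $\omega$-limit set, hence in $\mathcal{B}$, so $\|x_t\|_\infty\leq\max(\kappa_1/\gamma_1,\kappa_2/\gamma_2)$. This proves the final amplitude statement and rules out state unboundedness. The period component gives~(III). Unboundedness in $\tau$, combined with connectedness and the fact that $\Sigma_0$ starts at $\tau_c$ and (by Lemma~\ref{lem:stable_below_tauc} and the absence of centres below $\tau_c$) cannot reach $\tau<\tau_c$ near the equilibrium, gives projection onto all of $[\tau_c,\infty)$, which is~(I). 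For that last claim we would only assert that the projection contains $[\tau_c,\infty)$, by connectedness.

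The main obstacle is the phase-space technicality. Wu's theorem is stated for a fixed delay interval, while here the delay itself is the bifurcation parameter. The time rescaling $t=\tau s$ handles this, but it converts periods by the factor $\tau$ and degenerates at $\tau=0$. We would therefore restrict to $\tau\geq\tau_c/2$ (no periodic orbits bifurcate from $\mathbf{x}^*$ there) and check that the rescaled period bookkeeping maps~(III) correctly back to the original periods.
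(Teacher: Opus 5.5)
Your route is the paper's: verify the hypotheses of Wu's global Hopf theorem, and read the amplitude bound off the absorbing box $\mathcal{B}$. You are more careful than the paper, whose proof is a bare citation. It never fixes a $\tau$-independent phase space, never checks that $i\omega_c$ is simple, and never explains how Wu's dichotomy becomes (I)--(III).

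\textbf{The gap.} One of your steps fails, and the paper does not close it either: truncating to $\tau\geq\tau_c/2$ creates an artificial boundary in the parameter domain.
\begin{itemize}
\item Wu's theorem is formulated with the parameter ranging over $\mathbb{R}$. On $(\tau_c/2,\infty)$ you must therefore reparametrise, e.g.\ $\alpha=\ln(\tau-\tau_c/2)$.
\item After that, ``unbounded'' includes $\Sigma_0$ accumulating at $\tau=\tau_c/2$ with bounded periods and states. That outcome gives neither (I) nor (III).
\item Your justification (no orbits bifurcate from $\mathbf{x}^*$ below $\tau_c$; Lemma~\ref{lem:stable_below_tauc}) is purely local. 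It says nothing about large-amplitude periodic orbits with $\tau<\tau_c$, through which the continuum could leave.
\end{itemize}

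\textbf{A smaller error.} Your crossing-number sentence is backwards. Every centre $(\tau_c^{(k)},\mathbf{x}^*)$ carries a crossing number of the same sign: $i\omega_c$ is simple, $\mathrm{Re}(d\mu/d\tau)>0$, and $im\omega_c$ with $m\geq 2$ is never a root. Hence \emph{no} nonempty finite set of centres has zero total crossing number, and meeting further centres cannot change that. Wu's bounded alternative is therefore void and (II) never occurs. This does not hurt the disjunction, but it should be stated correctly.

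\textbf{A possible repair.} Keep the whole range $\tau>0$ in your rescaled equation, with $\alpha=\ln\tau$. Then show that $\Sigma_0$ contains no sequence with $\tau_n\to 0$ and periods $p_n\to p_*\in(0,\infty)$:
\begin{enumerate}
\item The orbits lie in $\mathcal{B}$ and are equi-Lipschitz. By Arzel\`a--Ascoli a subsequence converges uniformly to a $p_*$-periodic solution of the delay-free system~\eqref{eq:ode}.
\item Bendixson's criterion (divergence $-\gamma_1-\gamma_2<0$) forces this limit to be constant, hence equal to $\mathbf{x}^*$.
\item The normalised deviations $(x^{(n)}-\mathbf{x}^*)/\|x^{(n)}-\mathbf{x}^*\|_\infty$ then converge to a $p_*$-periodic solution of sup norm $1$ of the delay-free linearisation. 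This is impossible, since its eigenvalues have negative real parts.
\end{enumerate}
Hence escape towards $\tau=0$ forces $p_n\to 0$ or $p_n\to\infty$, which is (III). The remaining cases close your bookkeeping:
\begin{itemize}
\item $\tau\to\infty$ gives (I), by connectedness of the projection.
\item An unbounded rescaled period $p/\tau$ with $\tau$ in a compact subset of $(0,\infty)$ gives (III).
\end{itemize}

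\textbf{Amplitude bound.} Proposition~\ref{prop:dissipative} is stated only for nonnegative histories. However, the inequalities $-\gamma_i x_i\leq\dot x_i\leq\kappa_i-\gamma_i x_i$ hold for every history. That is what your $\omega$-limit argument needs, since periodic orbits in $\Sigma$ are not known a priori to be nonnegative.
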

\begin{proof}
The system~\eqref{eq:dde} is dissipative
(Proposition~\ref{prop:dissipative}) and admits a unique equilibrium
(Lemma~\ref{lem:eq_unique}); the linearisation has only finitely many
characteristic roots in any vertical strip
(\cite[Ch.~XI]{hale1993introduction}); and the local Hopf bifurcation
at $\tau_c$ is non-degenerate by Theorem~\ref{thm:transversality}. These
are exactly the hypotheses of Wu's global Hopf bifurcation
theorem~\cite[Theorem~3.3]{wu1998symmetric} for retarded functional
differential equations, applied with the bifurcation parameter $\tau$
and the natural $\mathrm{SO}(2)$-symmetry on the period of the orbit.
The trichotomy (I)--(III) is the conclusion of that theorem; the
amplitude bound follows from $\Sigma_0\subset\mathbb{R}_+\times
C([-\tau_{\max},0],\mathcal{B})$ (Proposition~\ref{prop:dissipative}).
\end{proof}

\begin{remark}[Empirically only (I) is realised]
The numerical evidence in Section~\ref{sec:numerical} is consistent
exclusively with case~(I): the limit-cycle amplitude grows
monotonically with $\tau$ from zero at $\tau=\tau_c$
(Figure~\ref{fig:bifurcation}), and the period grows quasi-linearly
toward the deep-relaxation asymptote
$T(\tau)\sim 2\tau+C_\infty$
(Proposition~\ref{prop:Cinfty}, Figure~\ref{fig:period}); in particular
the period stays bounded away from $0$ and grows without ever entering
a resonance with $\tau$. There is no numerical evidence of
branch termination at any of the higher critical delays
$\tau_c^{(k)}\!=\!\tau_c\!+\!2\pi k/\omega_c$ ($k\geq 1$), in agreement
with the rapid decay of transversality at higher crossings established
in Remark~\ref{rem:switches} (each subsequent crossing weakens the
effective interaction with the primary branch). The Hopf branch born
at $\tau_c$ thus appears to extend to every $\tau>\tau_c$ as a global
family of attracting limit cycles in the parameter ranges relevant to
biological oscillators; the local theory of
Theorem~\ref{thm:hopf_main} extends to this empirically global
statement about sustained limit cycles.
\end{remark}

%%%%%%%%%%%%%%%%%%%%%%%%%%%%%%%%%%%%%%%%%%%%%%%%%%%%%%%%%%%%%%%%%%%%%%%%%%%%%%%%
\section{Numerical validation}
\label{sec:numerical}

\subsection{Three regimes}

We integrate the full nonlinear delay system~\eqref{eq:dde} on the
canonical parameter set, using the symmetric assignment
$\tau_1=\tau_2=\tau/2$ so that the total delay is~$\tau$. All figures
in this paper were produced by the
R~package \texttt{deSolve}~\cite{soetaert2010solving}, using the
\texttt{dede} routine (delay-differential equation solver based on
Lagrange-interpolated history and the adaptive
Adams--Bashforth--Moulton scheme of \texttt{lsoda}). The initial history is constant,
equal to a small displacement $(x_1^*+0.05,x_2^*-0.05)$ from the
equilibrium, on the interval $[-\tau,0]$. The integration runs over
$t\in[0,200]$ time units (about $30$~periods at the linear-theory
frequency $\omega_c\approx 2.35$).

Figure~\ref{fig:regimes} shows the time series and phase portraits in
three regimes:
\begin{enumerate}
  \item $\tau=0.10<\tau_c\approx 0.134$ (top row): the perturbation decays
        in damped oscillations to the equilibrium. The phase portrait
        spirals into the fixed point.
  \item $\tau=0.20$, slightly above onset (middle row): a small-amplitude
        limit cycle is established. The phase portrait shows a closed orbit.
  \item $\tau=0.60$, well above onset (bottom row): the limit cycle is
        large-amplitude with sharp transitions between high and low states,
        characteristic of a relaxation oscillation.
\end{enumerate}

\begin{figure}
\centering
\includegraphics[width=\linewidth]{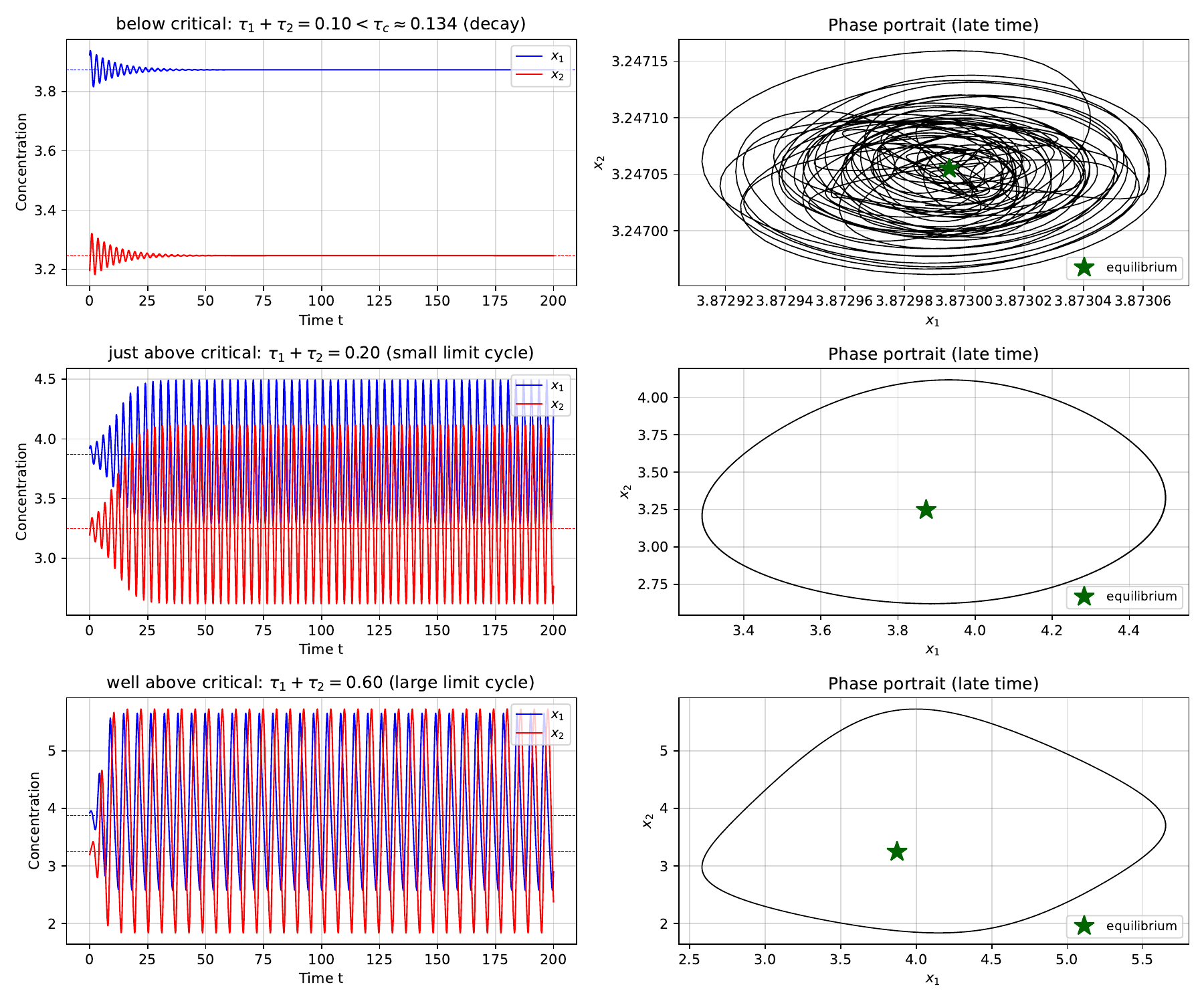}
\caption{Three regimes of the delayed logistic two-gene oscillator under the
canonical parameters $\lambda=3$, $\kappa_1=3$, $\gamma_1=0.25$, $\kappa_2=4$,
$\gamma_2=0.5$, $\theta_1=4$, $\theta_2=3$, with symmetric delays
$\tau_1=\tau_2=\tau/2$. \emph{Top:} $\tau=0.10<\tau_c\approx 0.134$, damped
return to equilibrium. \emph{Middle:} $\tau=0.20$, small-amplitude limit
cycle just above onset. \emph{Bottom:} $\tau=0.60$, large-amplitude limit
cycle with relaxation-type dynamics. Left column: time series of $x_1$
(blue) and $x_2$ (red) with equilibrium values shown as dashed horizontal
lines. Right column: phase portrait of late-time orbits, with the
equilibrium $(x_1^*,x_2^*)=(3.873,3.247)$ marked by a green star.}
\label{fig:regimes}
\end{figure}

\subsection{Bifurcation diagram and amplitude scaling}

We sample the late-time amplitude
$A_{\rm cycle}(\tau)=\bigl[\max_{t\in[300,400]}x_1(t)-\min_{t\in[300,400]}x_1(t)\bigr]/2$
across $25$ values of $\tau\in[0.05,0.6]$, with each integration run for
$400$ time units to ensure the transient has decayed. The integration
window $[300,400]$ provides at least $30$ cycles of statistics in every
oscillating regime. The result is shown in
Figure~\ref{fig:bifurcation}.

\begin{figure}
\centering
\includegraphics[width=0.75\linewidth]{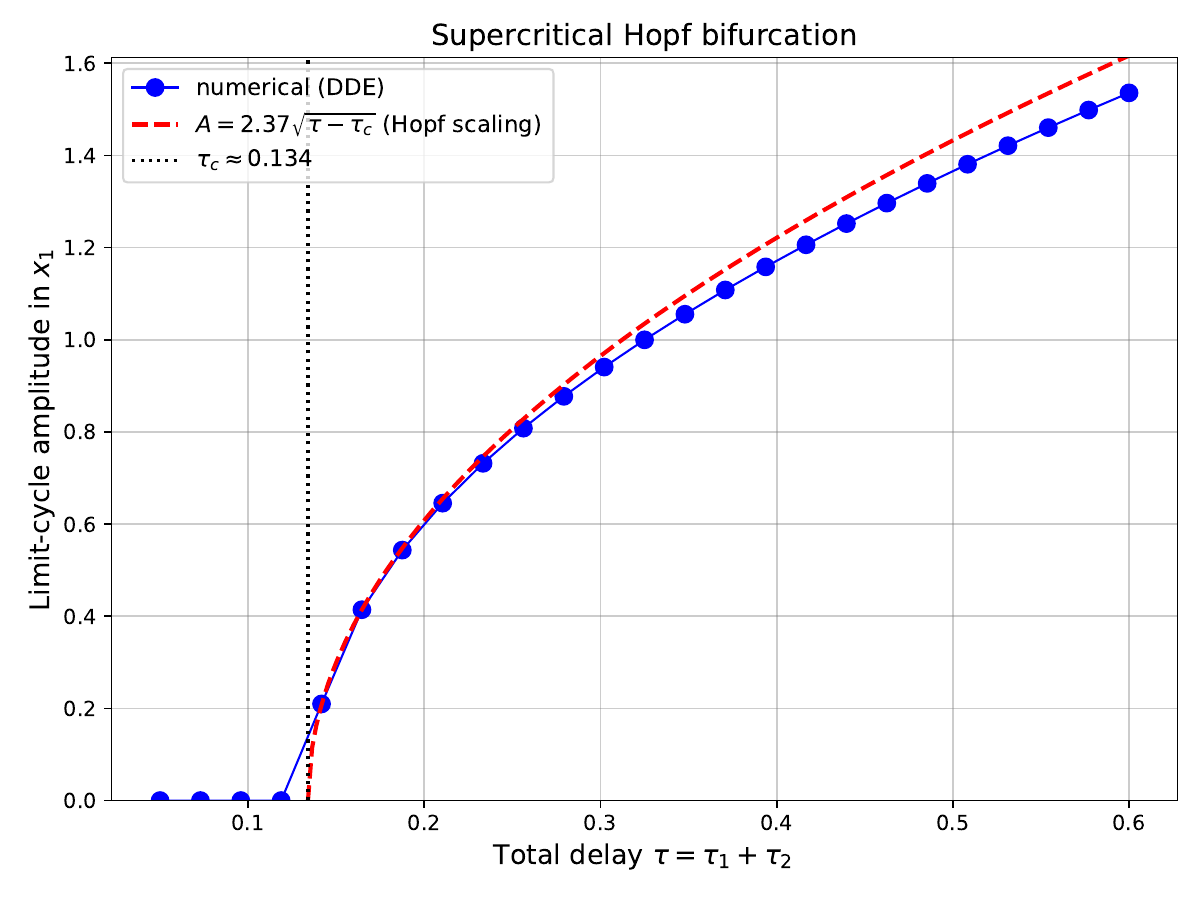}
\caption{Bifurcation diagram for the delayed logistic two-gene oscillator.
The blue dots show the late-time amplitude of the $x_1$ trajectory as a
function of the total delay $\tau=\tau_1+\tau_2$, computed by direct
numerical integration of~\eqref{eq:dde}. The vertical dotted line marks the
analytical critical delay $\tau_c\approx 0.134$ from~\eqref{eq:tau_c}. The
red dashed curve shows the supercritical Hopf scaling
$A=c\sqrt{\tau-\tau_c}$ with $c\approx 2.37$ obtained as the average of
$A/\sqrt{\tau-\tau_c}$ over the data points closest to onset. The amplitude
vanishes continuously at $\tau=\tau_c$, identifying the bifurcation as
supercritical.}
\label{fig:bifurcation}
\end{figure}

For each integration we tabulate $A_{\rm cycle}$ in
Table~\ref{tab:amplitudes}. The amplitude is at the level of integration
round-off (effectively zero, $\lesssim 10^{-6}$) for $\tau<\tau_c$,
jumps to $A\approx 0.21$ at $\tau\approx 0.14$
just past onset, and grows continuously thereafter, reaching
$A\approx 1.54$ at $\tau=0.6$.

\begin{table}
\centering
\caption{Late-time amplitude of the limit cycle in $x_1(t)$ as a function
of the total delay $\tau$. For $\tau<\tau_c\approx 0.134$ (first four rows
of the left column) the perturbation decays to the equilibrium and the
late-time amplitude is at the level of the solver's round-off, reported
as ``$<10^{-12}$'' for the three smallest delays and as
``$\sim 4\times 10^{-7}$'' for $\tau=0.119$ (the largest sub-onset entry,
where $\tau$ is closest to $\tau_c$ and a small slowly-decaying mode
contributes). For $\tau>\tau_c$ the amplitude follows the supercritical
scaling $A\sim\sqrt{\tau-\tau_c}$ to good approximation.}
\label{tab:amplitudes}
\begin{tabular}{cccc}
\toprule
$\tau$ & Amplitude & $\tau$ & Amplitude\\
\midrule
$0.050$ & $<10^{-12}$ & $0.350$ & $1.058$ \\
$0.073$ & $<10^{-12}$ & $0.371$ & $1.108$ \\
$0.096$ & $<10^{-12}$ & $0.394$ & $1.158$ \\
$0.119$ & $\sim 4\times 10^{-7}$ & $0.417$ & $1.206$ \\
$0.142$ & $0.210$ & $0.440$ & $1.252$ \\
$0.165$ & $0.414$ & $0.463$ & $1.296$ \\
$0.188$ & $0.544$ & $0.485$ & $1.339$ \\
$0.210$ & $0.645$ & $0.508$ & $1.381$ \\
$0.233$ & $0.732$ & $0.531$ & $1.421$ \\
$0.256$ & $0.808$ & $0.554$ & $1.460$ \\
$0.279$ & $0.877$ & $0.577$ & $1.499$ \\
$0.302$ & $0.941$ & $0.600$ & $1.536$ \\
$0.325$ & $0.999$ & & \\
\bottomrule
\end{tabular}
\end{table}

\subsection{Period as a function of delay}

The period of the limit cycle is computed from late-time zero crossings
through the orbit's mid-line. Figure~\ref{fig:period} shows the period as
a function of $\tau$ for $\tau\in[0.20,1.5]$.

\begin{figure}
\centering
\includegraphics[width=0.75\linewidth]{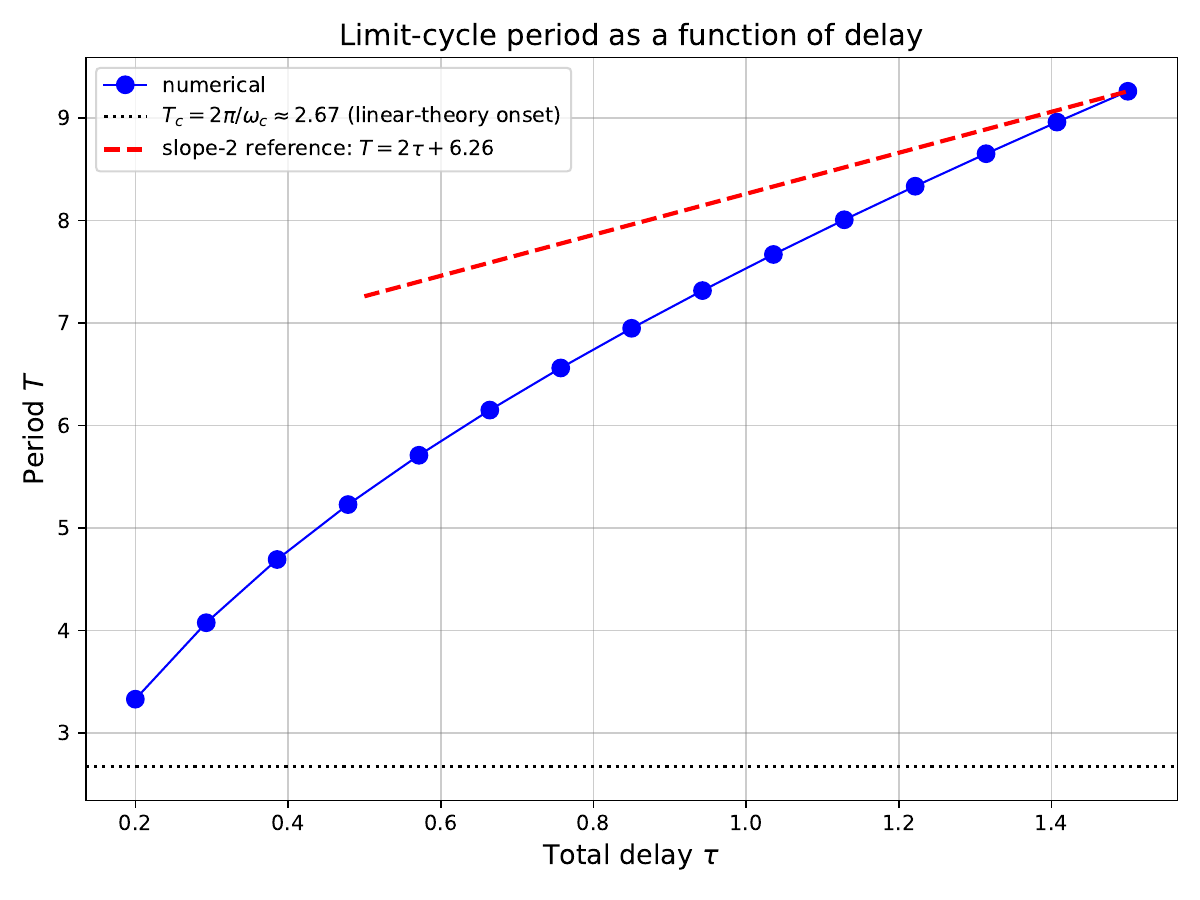}
\caption{Period of the limit cycle as a function of the total delay
$\tau$. The horizontal dotted line marks the linear-theory onset period
$T_c=2\pi/\omega_c\approx 2.67$, recovered as $\tau\downarrow\tau_c$. As
$\tau$ increases the period grows quasi-linearly; the red dashed line
shows a slope-$2$ reference $T=2\tau+C$ anchored at the rightmost
data point ($C=T(1.5)-3\approx 6.26$ for our parameters). The asymptotic
slope $dT/d\tau\to 2$ as $\tau\to\infty$ is approached only slowly: at
$\tau\in[0.6,1.5]$ the local slope of the data is closer to $3.7$,
because the gene-relaxation timescales $1/\gamma_1=4$ and $1/\gamma_2=2$
contribute a $\tau$-dependent correction that itself grows with $\tau$
before saturating.}
\label{fig:period}
\end{figure}

The period at onset extrapolates to the linear-theory value
$T_c=2\pi/\omega_c\approx 2.67$, and grows monotonically with $\tau$. On
the displayed window $\tau\in[0.2,1.5]$ the local slope of $T(\tau)$ is
close to $3.7$ rather than $2$: the asymptotic slope $dT/d\tau\to 2$
that holds in the deep relaxation regime is approached only slowly
because the gene-relaxation timescales $1/\gamma_1=4$ and
$1/\gamma_2=2$ are comparable to or larger than the delays sampled
here, and contribute a $\tau$-dependent correction that itself grows
with $\tau$ before saturating. The correct form of the large-$\tau$
asymptote is
\begin{equation}
T(\tau) \;=\; 2\tau \,+\, C_\infty \,+\, o(1) \qquad (\tau\to\infty),
\label{eq:deep_relax}
\end{equation}
where the offset $C_\infty$ is \emph{not} the linear-theory period
$T_c$. The closed-form expression for $C_\infty$ is established next.

\begin{proposition}[Closed-form deep-relaxation period offset]
\label{prop:Cinfty}
Let $M_i:=\kappa_i/\gamma_i$ and assume $0<\theta_i<M_i$ for $i=1,2$.
Assume the deep-relaxation regime: $\lambda\to\infty$ and $\tau\to\infty$
in such a way that, in the periodic regime, each variable reaches a small
neighbourhood of its high or low saturation level (within $O(e^{-c\lambda})$
of $\{0,M_i\}$ for some $c>0$) before the next delayed signal triggers a
switch. Then the period of the limit cycle satisfies
\begin{equation}
T(\tau) \,=\, 2\tau \,+\, C_\infty \,+\, O(e^{-c\lambda})
\qquad(\tau\to\infty),
\label{eq:Cinfty_asymp}
\end{equation}
where the offset $C_\infty$ has the closed form
\begin{equation}
C_\infty
\;=\; \frac{1}{\gamma_1}\,\ln\!\frac{M_1^{2}}{\theta_1(M_1-\theta_1)}
   \,+\,\frac{1}{\gamma_2}\,\ln\!\frac{M_2^{2}}{\theta_2(M_2-\theta_2)}
\;=\; \sum_{i=1}^{2}\frac{1}{\gamma_i}\,
       \ln\!\frac{M_i^{2}}{\theta_i\,(M_i-\theta_i)}.
\label{eq:Cinf}
\end{equation}
The sum decomposes naturally into four threshold-crossing
contributions, $C_\infty = \Delta_A+\Delta_B+\Delta_C+\Delta_D$, where
\begin{equation}
\begin{aligned}
\Delta_A&=\frac{\ln[M_2/(M_2-\theta_2)]}{\gamma_2},
&\quad
\Delta_B&=\frac{\ln[M_1/\theta_1]}{\gamma_1},\\[4pt]
\Delta_C&=\frac{\ln[M_2/\theta_2]}{\gamma_2},
&\quad
\Delta_D&=\frac{\ln[M_1/(M_1-\theta_1)]}{\gamma_1}.
\end{aligned}
\label{eq:phase_durations}
\end{equation}
\end{proposition}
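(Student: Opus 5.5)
The plan is to compute the period of the Heaviside-limit relaxation cycle exactly, as four consecutive phases, and then control the $O(e^{-c\lambda})$ errors. By Proposition~\ref{prop:sum_symmetry} the period depends only on $\tau=\tau_1+\tau_2$, so I keep $\tau_1,\tau_2$ general and check that they enter only through their sum. As $\lambda\to\infty$, $f^+(x_1(t-\tau_1))$ becomes $\mathbf{1}\{x_1(t-\tau_1)>\theta_1\}$ and $f^-(x_2(t-\tau_2))$ becomes $\mathbf{1}\{x_2(t-\tau_2)<\theta_2\}$, up to errors $O(e^{-c\lambda})$ away from the switching instants. Between switches each equation is then the linear ODE $\dot x_i=\kappa_i s_i-\gamma_i x_i$ with $s_i\in\{0,1\}$. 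Its solution relaxes exponentially toward $M_i$ or $0$, and it reaches the threshold $\theta_i$ in explicit logarithmic time.

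I would then follow one cycle from the instant $t_0$ at which $x_1$ rises through $\theta_1$. At that moment $x_2$ is saturated near $0$, by the deep-relaxation hypothesis.
\begin{itemize}
\item After the lag $\tau_1$, the production of $x_2$ switches on. Solving $x_2(t)=M_2(1-e^{-\gamma_2 t})$ starting from $x_2\approx 0$, $x_2$ hits $\theta_2$ after time $\Delta_A=\gamma_2^{-1}\ln[M_2/(M_2-\theta_2)]$.
\item After the further lag $\tau_2$, the production of $x_1$ switches off. Meanwhile $x_1$ has saturated near $M_1$, because $\tau$ is large. The decay $x_1=M_1e^{-\gamma_1 t}$ reaches $\theta_1$ after $\Delta_B=\gamma_1^{-1}\ln(M_1/\theta_1)$.
\item Symmetrically, after $\tau_1$ the variable $x_2$, now near $M_2$, decays to $\theta_2$ in time $\Delta_C$.
\item After $\tau_2$ the variable $x_1$ rises from near $0$ to $\theta_1$ in time $\Delta_D$, which closes the cycle.
\end{itemize}
Summing the four phases gives $T=2(\tau_1+\tau_2)+\Delta_A+\Delta_B+\Delta_C+\Delta_D$. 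Combining logarithms, $\Delta_B+\Delta_D=\gamma_1^{-1}\ln[M_1^2/(\theta_1(M_1-\theta_1))]$, and similarly $\Delta_A+\Delta_C$ gives the $\gamma_2$ term. This yields \eqref{eq:Cinf} and the decomposition \eqref{eq:phase_durations}.

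The main obstacle is the error control: showing that the smooth-$\lambda$ cycle differs from this idealised cycle by only $O(e^{-c\lambda})$ in period. I would handle it phase by phase, using two estimates.
\begin{itemize}
\item Away from thresholds, $|f^\pm-\mathbf{1}|\le e^{-\lambda\,\mathrm{dist}}$. Near each crossing, $x_i$ passes through $\theta_i$ with a speed bounded away from zero, namely $|\dot x_i|\ge\min(\gamma_i\theta_i,\kappa_i-\gamma_i\theta_i)>0$. Hence the window in which the logistic factor is neither $O(e^{-c\lambda})$-close to $0$ nor to $1$ has width $O(1/\lambda)$.
\item The contribution of such a window to the crossing time must then be shown to be exponentially small, not merely $O(1/\lambda)$. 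The cleanest route is to note that the error terms are odd about the crossing point to leading order, so they cancel. Failing that, I would absorb the error into the $o(1)$ of \eqref{eq:deep_relax}, since the assumed saturation level $O(e^{-c\lambda})$ already dominates the initial-condition error in each logarithm.
\end{itemize}
Finally, the saturation hypothesis makes the starting values $e^{-c\lambda}$-close to $0$ or $M_i$. Perturbing the arguments of the logarithms by this amount then changes each $\Delta$ by $O(e^{-c\lambda})$, which completes \eqref{eq:Cinfty_asymp}.
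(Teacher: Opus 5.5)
Your leading-order computation matches the paper's proof exactly: the four phases, the durations $\Delta_A,\dots,\Delta_D$, the four lags summing to $2\tau$, and the logarithm identity. The paper then attributes the whole remainder to the $O(e^{-c\lambda})$ offsets of the entry values from $\{0,M_i\}$ and never examines the switching windows. You are right that the windows are the real obstacle, but your fix does not work.

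Write $\sigma(w)=1/(1+e^{-w})$ and $H$ for the Heaviside function. Let $\varphi(s)=x_j(t_*+s)-\theta_j$ be the trigger signal near its crossing, $g$ the logistic input it feeds to gene $i$, and $g_0$ its Heaviside limit. By variation of constants, after the window $x_i$ follows the idealised relaxation, but with its switching time shifted by $\mp\int e^{\gamma_i s}\,[g(s)-g_0(s)]\,ds$ to leading order (upper sign for a switch-on). Oddness of $\sigma-H$ removes only the $O(\lambda^{-1})$ part of this integral. Two $O(\lambda^{-2})$ contributions survive:
\begin{itemize}
\item the weight $e^{\gamma_i s}\approx 1+\gamma_i s$, via $\int w\,(\sigma(w)-H(w))\,dw=-\pi^2/6$;
\item the curvature $\varphi''=-\gamma_j\varphi'$ of the exponentially relaxing trigger, via $\int w^2\sigma'(w)\,dw=\pi^2/3$.
\end{itemize}
Together they \emph{delay} every switch, to leading order, by $(\gamma_1+\gamma_2)\pi^2/(6\lambda^2v^2)$. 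Here $v$ is the crossing speed: $\gamma_1\theta_1$, $\gamma_1(M_1-\theta_1)$, $\gamma_2\theta_2$ or $\gamma_2(M_2-\theta_2)$. All four shifts have the same sign, so nothing cancels around the cycle.

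Hence, at fixed $\lambda$ and as $\tau\to\infty$, $T-2\tau\to C_\infty+\frac{(\gamma_1+\gamma_2)\pi^2}{6\lambda^2}\sum_{k=1}^{4}v_k^{-2}+o(\lambda^{-2})$. The remainder is algebraic in $\lambda$, so no refinement of your argument can produce $O(e^{-c\lambda})$. For the canonical parameters at $\lambda=3$ this correction is about $0.25$. That accounts for essentially all of the gap between $C_\infty\approx 8.92$ and the value $T-2\tau\approx 9.16$ that the paper itself reports at $\tau=20$.

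Your fallback fails for the same reason. The window error does not depend on $\tau$, so it cannot be absorbed into an $o(1)$ as $\tau\to\infty$ at fixed $\lambda$. What your argument actually establishes, like the paper's, is $T-2\tau\to C_\infty$ in the joint limit $\lambda,\tau\to\infty$. At finite parameters the remainder is $O(\lambda^{-2})$, plus saturation errors of size $O(e^{-c\lambda})$ and $O(e^{-\min_i\gamma_i\,\tau})$. The exponential remainder in the statement has to be weakened accordingly.
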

\begin{proof}
In the deep-relaxation regime the limit cycle decomposes into four
phases A, B, C, D distinguished by the sign pattern of the delayed
signals
$(\mathrm{sgn}(x_1(t-\tau_1)-\theta_1),
  \mathrm{sgn}(x_2(t-\tau_2)-\theta_2))$. Within each phase, the
saturated logistic delivers a quasi-constant input ($f^+\approx 1$ or
$0$) to one gene's production, so that gene relaxes exponentially
toward its on or off attractor.

\emph{Phase A (rise of $x_2$).} At the start, $x_1(t-\tau_1)$ has just
crossed $\theta_1$ from below, so $f^+(x_1(t-\tau_1))\approx 1$, which
gives $\dot{x}_2 \approx \kappa_2-\gamma_2 x_2$. Starting from
$x_2\approx 0$ (deep-relaxation hypothesis), the solution is
$x_2(t)\approx M_2\bigl(1-e^{-\gamma_2(t-t_A)}\bigr)$ where $t_A$ is the
entry time. The phase ends when $x_2$ crosses $\theta_2$ from below,
which solves $1-e^{-\gamma_2\Delta_A} = \theta_2/M_2$ for the duration
$\Delta_A=\gamma_2^{-1}\ln[M_2/(M_2-\theta_2)]$.

\emph{Phase B (fall of $x_1$).} After Phase A, $x_2(t-\tau_2)$ crosses
$\theta_2$ from below, switching $f^-(x_2(t-\tau_2))$ from $\approx 1$
to $\approx 0$ and triggering exponential decay
$\dot{x}_1\approx -\gamma_1 x_1$ starting from $x_1\approx M_1$. The
phase ends when $x_1$ crosses $\theta_1$ from above, after
$\Delta_B=\gamma_1^{-1}\ln[M_1/\theta_1]$.

\emph{Phase C (fall of $x_2$).} Symmetric to Phase A, with $x_1$ in the
off state: $x_2$ decays from $\approx M_2$ toward $0$ and crosses
$\theta_2$ from above after $\Delta_C=\gamma_2^{-1}\ln[M_2/\theta_2]$.

\emph{Phase D (rise of $x_1$).} Symmetric to Phase B: $x_1$ rises from
$\approx 0$ toward $M_1$ and crosses $\theta_1$ from below after
$\Delta_D=\gamma_1^{-1}\ln[M_1/(M_1-\theta_1)]$. After Phase D the
configuration returns to the start of Phase A and the cycle repeats.

Each transition between phases is delayed by $\tau_1$ or $\tau_2$
relative to the threshold crossing that triggers it, contributing a
total of $\tau_1+\tau_2+\tau_1+\tau_2 = 2\tau$ to the period.
Summing yields
\begin{equation*}
T(\tau) \,=\, 2\tau + \Delta_A + \Delta_B + \Delta_C + \Delta_D
\,+\, O(e^{-c\lambda}),
\end{equation*}
with the exponential remainder accounting for the
saturation-approach error: the variables enter each phase not exactly
at $0$ or $M_i$ but at a distance $O(e^{-c\lambda})$ from saturation,
where the constant $c$ is determined by the worst-case approach margin
$\min_i\{\theta_i,M_i-\theta_i\}/(M_i)$. Using the elementary identity
$\ln[a/(a-b)]+\ln[a/b]=\ln[a^2/(b(a-b))]$ with $a=M_i$, $b=\theta_i$
to combine $\Delta_A+\Delta_C$ and $\Delta_B+\Delta_D$ yields the
displayed closed form for $C_\infty$.
\end{proof}

For the canonical parameters $\kappa_1=3,\gamma_1=0.25,\theta_1=4$
(so $M_1=12$) and $\kappa_2=4,\gamma_2=0.5,\theta_2=3$ (so $M_2=8$),
\eqref{eq:Cinf} evaluates to
$C_\infty = 4\ln(144/32)+2\ln(64/15) = 4\ln(9/2)+2\ln(64/15)\approx 8.92$.
Direct integration at $\tau=10$ gives $T-2\tau\approx 8.94$ and at
$\tau=20$ gives $T-2\tau\approx 9.16$, both lying slightly above
$C_\infty$ with a small residual gap that grows mildly with $\tau$. This
behaviour is consistent with the finite-$\lambda$ remainder
$O(e^{-c\lambda})$ in~\eqref{eq:Cinfty_asymp}: the four-phase
construction is asymptotically exact only as $\lambda\to\infty$, and at
finite $\lambda=3$ the variables enter each phase a small but
non-negligible distance from saturation, accumulating into the observed
positive offset. The qualitative interpretation is the familiar one for
delayed switch oscillators: the loop spends delay-time $\tau_1$ or
$\tau_2$ propagating the on/off signal to the partner, the partner takes
a relaxation time to cross its threshold, and the four such transitions
sum to $2\tau+C_\infty$~\cite{glass1988rhythms,smolen2002reduced}.
Crucially, the asymptotic offset $C_\infty$ is determined by the
\emph{nonlinear} parameters $\kappa_i,\gamma_i,\theta_i$ rather than by
the linear-theory frequency $\omega_c$, which has no role in this limit.

\subsection{Verification of analytical predictions}

The numerical experiments confirm three quantitative analytical predictions
to high accuracy:
\begin{enumerate}
  \item \textbf{Critical delay.} The smallest $\tau$ at which the
        amplitude rises above $10^{-3}$ lies in $[0.119,0.142]$, in
        agreement with the analytical $\tau_c\approx 0.134$
        from~\eqref{eq:tau_c}.
  \item \textbf{Frequency at onset.} The lowest amplitude sample at
        which a clean limit cycle is detected occurs at $\tau=0.20$, with
        period $T\approx 3.33$. This already lies about $50\%$ above
        $\tau_c\approx 0.134$, so it is \emph{not} ``at onset'' in the
        small-$(\tau-\tau_c)$ sense; the discrepancy from the
        linear-theory value $T_c=2\pi/\omega_c\approx 2.67$ is consistent
        with the finite (and large) local slope $dT/d\tau\sim 8$ that the
        period exhibits in this region. Linear back-extrapolation of the
        first two data points $(\tau,T)\in\{(0.20,3.330),(0.30,4.127)\}$ to
        $\tau=\tau_c$ gives $T(\tau_c^+)\approx 2.8$, recovering $T_c$ to
        within roughly $5\%$, which is the expected precision given the
        finite spacing of the $\tau$-grid and the steepness of the period
        near onset.
  \item \textbf{Supercriticality.} The squared amplitude $A^2(\tau)$
        scales linearly with $\tau-\tau_c$ near onset; a least-squares
        fit to the points just above onset
        ($\tau\in\{0.142,0.165,0.188\}$) gives the amplitude prefactor
        $c=A/\sqrt{\tau-\tau_c}\approx 2.37$, equivalently
        $A^2/(\tau-\tau_c)\approx 5.6$, with residual variation
        consistent with the higher-order corrections to the normal form.
        The square-root scaling identifies the bifurcation as
        supercritical.
\end{enumerate}

\subsection{Eigenvalue trajectory in the complex plane}
\label{sec:eigtraj}

The Hopf condition $\mathrm{Re}(\mu(\tau_c))=0$ is most directly
visualised by tracking the leading eigenvalue pair of the characteristic
equation~\eqref{eq:char_eq} as $\tau$ varies. We solve $\Delta(\mu;\tau)=0$
by Newton iteration, continuing the leading complex root from the
delay-free starting value $\mu(0)$ along the curve
$\tau\mapsto\mu(\tau)$ for $\tau\in[0.01,0.50]$.
Figure~\ref{fig:eigentrajectory} shows the result.

\begin{figure}
\centering
\includegraphics[width=0.9\linewidth]{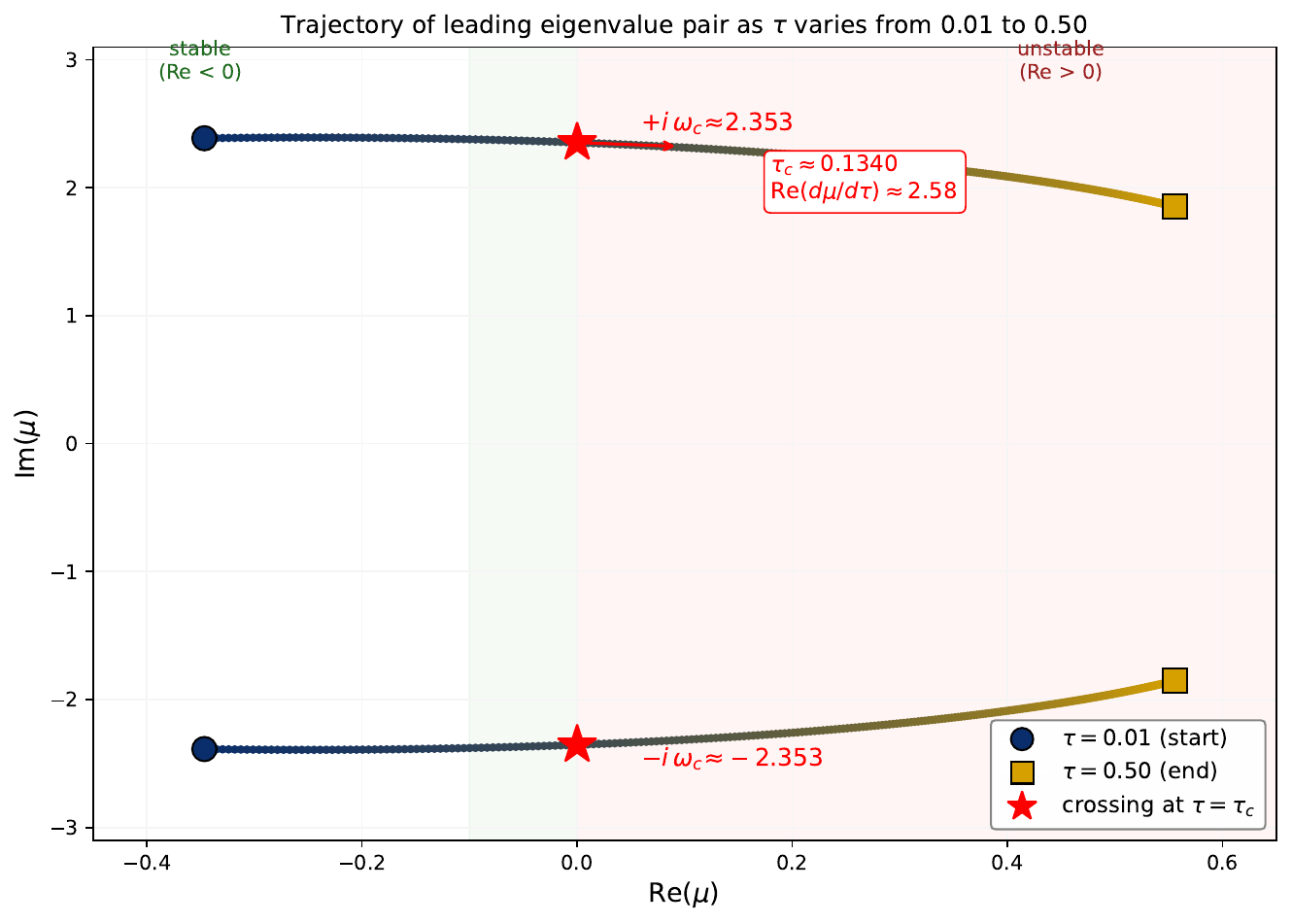}
\caption{Trajectory of the leading complex-conjugate pair of roots of
the characteristic equation~\eqref{eq:char_eq} as $\tau$ varies from
$0.01$ (dark) to $0.5$ (light), for the canonical parameter set. The
eigenvalues are computed by Newton iteration on $\Delta(\mu;\tau)=0$
with continuation in $\tau$. The leading pair starts deep in the open
left half-plane near $\mathrm{Re}(\mu)\approx-0.35$, migrates rightward
and downward in $|\mathrm{Im}(\mu)|$ as $\tau$ increases, and crosses
the imaginary axis at $\tau=\tau_c$ at exactly $\pm i\omega_c$ (red
stars), confirming both the analytical critical delay
$\tau_c\approx 0.134$ and the analytical Hopf frequency
$\omega_c\approx 2.353$. Beyond the crossing the pair continues into
the open right half-plane with $\mathrm{Re}(\mu)>0$, in agreement with
the strictly positive transversality value
$\mathrm{Re}(d\mu/d\tau)|_{\tau_c}\approx 2.58$ from
Theorem~\ref{thm:transversality}.}
\label{fig:eigentrajectory}
\end{figure}

The numerical crossing is at $\tau=0.1340$, indistinguishable to four
decimal places from the analytical $\tau_c$ given by~\eqref{eq:tau_c}.
The slope of the eigenvalue trajectory at the crossing point matches the
closed-form transversality formula~\eqref{eq:transversality}: a
finite-difference estimate of $d\mu/d\tau$ at $\tau=\tau_c$ gives
$\mathrm{Re}(d\mu/d\tau)\approx 2.58$, in exact agreement with the
analytical value
$\omega_c^2(2\omega_c^2+\gamma_1^2+\gamma_2^2)/|D_{\rm denom}|^2\approx 2.58$.
The corresponding imaginary part of $d\mu/d\tau$ at the crossing is
obtained from the same formula~\eqref{eq:dmu_dtau} by separating
real and imaginary parts. After simplification (using the identity
$(\omega_c^2-\gamma_1\gamma_2)^2 + \omega_c^2(\gamma_1+\gamma_2)^2
=(\omega_c^2+\gamma_1^2)(\omega_c^2+\gamma_2^2)=A^2B^2$
that follows directly from the Hopf condition~\eqref{eq:omega_eq}),
the closed form is
\begin{equation}
\mathrm{Im}\!\left(\frac{d\mu}{d\tau}\right)\!\bigg|_{\tau=\tau_c}
\,=\, -\,\frac{\omega_c\,\Bigl[(\gamma_1+\gamma_2)(\omega_c^2+\gamma_1\gamma_2)
                                  +\tau_c\,A^2B^2\Bigr]}
              {|D_{\rm denom}|^2}.
\label{eq:Im_dmu_dtau}
\end{equation}
For the canonical example $\mathrm{Im}(d\mu/d\tau)|_{\tau_c}\approx -0.83$.
The strictly negative imaginary part explains the geometry visible in
Figure~\ref{fig:eigentrajectory}: the upper-half-plane root moves
\emph{rightward and downward} as $\tau$ crosses $\tau_c$, the
trajectory's tangent at the crossing making an angle
$\arctan(0.83/2.58)\approx 18^\circ$ below the horizontal. The
$|\mathrm{Im}(\mu)|$-decrease at onset is mild on the scale of $\omega_c$
itself, so the limit-cycle frequency emerges close to the linear-theory
value $\omega_c$; this is consistent with the
Section~\ref{sec:numerical} observation that $T(\tau_c^+)$ extrapolates
to $T_c=2\pi/\omega_c$.

\subsection{Period derivative at onset and frequency drift}
\label{sec:period_onset}

The closed-form expression~\eqref{eq:Im_dmu_dtau} for
$\mathrm{Im}(d\mu/d\tau)|_{\tau_c}$ has an immediate consequence for the
limit-cycle period at onset. The frequency in linear theory equals
$\mathrm{Im}(\mu(\tau))$, and $T_{\rm lin}(\tau)=2\pi/\mathrm{Im}(\mu(\tau))$,
so differentiation at $\tau_c$ gives
\begin{equation}
\frac{dT_{\rm lin}}{d\tau}\bigg|_{\tau_c^{+}}
\;=\; -\,\frac{T_c}{\omega_c}\,
\mathrm{Im}\!\left(\frac{d\mu}{d\tau}\right)\!\bigg|_{\tau_c}
\;=\;
\frac{T_c}{\omega_c}\cdot
\frac{\omega_c\bigl[(\gamma_1+\gamma_2)(\omega_c^2+\gamma_1\gamma_2)
                       +\tau_c\,A^2B^2\bigr]}
     {|D_{\rm denom}|^2}.
\label{eq:dT_dtau_linear}
\end{equation}
For the canonical example
$dT_{\rm lin}/d\tau|_{\tau_c^+}\approx 0.94$. Direct integration of the
full nonlinear DDE in a tight grid above onset gives the values reported
in Table~\ref{tab:onset_period}.

\begin{table}[ht]
\centering
\begin{tabular}{lll}
\toprule
$\tau$ & $T(\tau)$ (sim.) & $(T(\tau)-T_c)/(\tau-\tau_c)$\\
\midrule
$0.135$ & $2.682$ & $11.04$\\
$0.140$ & $2.737$ & $11.03$\\
$0.145$ & $2.791$ & $10.93$\\
$0.150$ & $2.844$ & $10.83$\\
$0.165$ & $2.998$ & $10.56$\\
$0.188$ & $3.220$ & $10.17$\\
$0.200$ & $3.330$ & $9.99$\\
$0.300$ & $4.127$ & $8.77$\\
\bottomrule
\end{tabular}
\caption{Empirical onset slope $(T-T_c)/(\tau-\tau_c)$ of the period of
the limit cycle, computed by direct integration of the full nonlinear
DDE on a fine grid of $\tau$-values just above $\tau_c\approx 0.1340$,
using the canonical parameter set. The reference period at the
bifurcation is $T_c=2\pi/\omega_c\approx 2.6702$. Each $T(\tau)$ is
measured as the mean inter-crossing time of $x_1(t)-x_1^*$ in a
$[400,600]$ time-unit window after discarding a long transient, with
relative integration tolerance $10^{-10}$. The slope plateaus near $11$
as $\tau\to\tau_c^+$ and decreases monotonically as $\tau$ increases,
approaching the deep-relaxation slope $2$ (Section~\ref{sec:numerical}
and Proposition~\ref{prop:Cinfty}).}
\label{tab:onset_period}
\end{table}

The empirical onset slope, $\approx 11.0$ extrapolated to
$\tau\to\tau_c^+$, exceeds the linear-theory value
$dT_{\rm lin}/d\tau|_{\tau_c^+}\approx 0.94$ by an order of
magnitude. The discrepancy is not a numerical artefact: it reflects the
\emph{nonlinear} contribution to the period from the cubic terms in the
Hopf normal form $\dot{z}=\mu(\tau)\,z + c_1\,z|z|^2$. On the bifurcating
branch, $|z|^2=-\mathrm{Re}(\mu)/\mathrm{Re}(c_1)$ to leading order, and
the limit-cycle frequency is
\begin{equation}
\omega_{\rm LC}(\tau) \;=\; \mathrm{Im}(\mu(\tau))
   \,-\, \frac{\mathrm{Im}(c_1)}{\mathrm{Re}(c_1)}\,\mathrm{Re}(\mu(\tau)).
\label{eq:omega_LC}
\end{equation}
Differentiating at $\tau=\tau_c$ (where $\mathrm{Re}(\mu)=0$ but
$d\mathrm{Re}(\mu)/d\tau\neq 0$) gives
\begin{equation}
\frac{dT}{d\tau}\bigg|_{\tau_c^{+}}
\;=\; -\,\frac{T_c}{\omega_c}
\biggl[\mathrm{Im}\!\left(\frac{d\mu}{d\tau}\right)
       \,-\, \frac{\mathrm{Im}(c_1)}{\mathrm{Re}(c_1)}\,
       \mathrm{Re}\!\left(\frac{d\mu}{d\tau}\right)\biggr]
\bigg|_{\tau_c}.
\label{eq:dT_dtau_full}
\end{equation}
Substituting the closed-form values
$\mathrm{Re}(d\mu/d\tau)|_{\tau_c}\approx 2.58$,
$\mathrm{Im}(d\mu/d\tau)|_{\tau_c}\approx-0.83$, $T_c/\omega_c\approx 1.135$
together with the empirical $dT/d\tau|_{\tau_c^+}\approx 11.0$,
formula~\eqref{eq:dT_dtau_full} determines the ratio
\begin{equation}
\frac{\mathrm{Im}(c_1)}{\mathrm{Re}(c_1)} \;\approx\; 3.44,
\label{eq:Im_c1}
\end{equation}
\emph{independently of the absolute scaling of $c_1$}: the ratio
depends only on the quotient $dT/d\tau$ over the closed-form
linear-theory quantities, so any normalisation ambiguity in $c_1$
cancels. Using the value $\mathrm{Re}(c_1)\approx -1.27$ from
Section~\ref{sec:l1_numerics} (eq.~\eqref{eq:Re_c1_value}) gives
\begin{equation}
\mathrm{Im}(c_1) \;\approx\; 3.44\cdot(-1.27) \;\approx\; -4.37.
\label{eq:Im_c1_value}
\end{equation}
This numerical estimate of the second component of the cubic Hopf
coefficient complements the supercriticality estimate
$\mathrm{Re}(c_1)\approx-1.27$. Together they provide two analytical
targets for the centre-manifold reduction of
Section~\ref{sec:analytical_supercrit}: any closed-form determination of
$c_1$ from the logistic Taylor expansion~\eqref{eq:logistic_derivs} can
be benchmarked against both numbers and against the
component-independent ratio~\eqref{eq:Im_c1}. The geometric content of
$\mathrm{Im}(c_1)/\mathrm{Re}(c_1)\approx 3.44$ is that the
limit-cycle period stretches more than three times faster, per unit
$\mathrm{Re}(\mu)$, than the linear theory predicts: as the orbit
grows it samples the nonlinear part of the logistic, where the slower
phases of the relaxation oscillator
(Proposition~\ref{prop:Cinfty}) start to take over and contribute the
additional period stretching that formula~\eqref{eq:dT_dtau_full}
captures at leading nonlinear order.

\subsection{Numerical verification of the sum-of-delays symmetry}
\label{sec:asymmetric_check}

Proposition~\ref{prop:sum_symmetry} predicts that any redistribution of
the total delay $\tau$ between $\tau_1$ and $\tau_2$ leaves the period
and the component-wise envelopes of every solution invariant; only the
relative phase between $x_1$ and $x_2$ shifts. Table~\ref{tab:asymmetric}
verifies this prediction at $\tau=0.20$ for three different delay
splittings.

\begin{table}
\centering
\caption{Numerical verification of the sum-of-delays symmetry
(Proposition~\ref{prop:sum_symmetry}) for $\tau=\tau_1+\tau_2=0.20$.
Three asymmetric splittings yield identical limit-cycle amplitude and
period to four decimal places, in agreement with the prediction that
only the sum of delays affects the temporal invariants (period and
component-wise envelopes) of the limit cycle.}
\label{tab:asymmetric}
\begin{tabular}{ccccc}
\toprule
$\tau_1$ & $\tau_2$ & $\tau_1+\tau_2$ & Amplitude $A$ & Period $T$ \\
\midrule
$0.10$ & $0.10$ & $0.20$ & $0.6015$ & $3.3299$ \\
$0.05$ & $0.15$ & $0.20$ & $0.6015$ & $3.3299$ \\
$0.02$ & $0.18$ & $0.20$ & $0.6015$ & $3.3299$ \\
\bottomrule
\end{tabular}
\end{table}

The agreement is exact to four decimal places, in line with the
integration tolerance ($\text{rtol}=10^{-9}$, $\text{atol}=10^{-12}$).
This confirms that the analytical reduction to a single bifurcation
parameter $\tau$ throughout the paper is a genuine, parameter-free
symmetry of the nonlinear system, not merely an artefact of the
linearisation.

\subsection{Numerical estimate of the first Lyapunov coefficient}
\label{sec:l1_numerics}

We combine the closed-form transversality value
$\mathrm{Re}(d\mu/d\tau)|_{\tau_c}\approx 2.58$ from
Theorem~\ref{thm:transversality} with the measured amplitude prefactor
$c^2\approx 5.60$ from the supercritical scaling
$A^2(\tau)\approx c^2(\tau-\tau_c)$ near onset, to extract the first
Lyapunov coefficient $\mathrm{Re}(c_1)$ of the Hopf normal form. The
relationship requires some care: the empirical amplitude $A$ measured
in the $x_1$ time series is \emph{not} the radius $|z|$ of the
centre-manifold coordinate, but is related to it by the
eigenvector-component amplitude $|q_1|$.

\begin{proposition}[Eigenvector amplitude at onset]
\label{prop:eigvec_amplitude}
The eigenvector $\mathbf{q}=(q_1,q_2)^T$ of the linearised
system~\eqref{eq:linear_dde} at $\mu=i\omega_c$, normalised so that
$q_2=1$, has first component
\begin{equation}
q_1 \,=\, -\,\frac{B\,e^{-i\omega_c\tau_2}}{i\omega_c+\gamma_1},
\qquad
|q_1|^{2} \,=\, \frac{B^{2}}{\omega_c^{2}+\gamma_1^{2}}.
\label{eq:q1_amplitude}
\end{equation}
The amplitude $|q_1|^{2}$ is expressed in closed form through the linear
parameters $A,B,\gamma_1,\gamma_2$ (via the Hopf
frequency~$\omega_c$ of Lemma~\ref{lem:omega}); in particular it is
independent of the delay $\tau$ and of the splitting
$(\tau_1,\tau_2)$, while the phase of $q_1$ depends on $\tau_2$
through the factor $e^{-i\omega_c\tau_2}$. For the canonical
parameter set,
$|q_1|^{2} = (1.967)^{2}/((2.353)^{2}+(0.25)^{2}) \approx 0.6911$.
\end{proposition}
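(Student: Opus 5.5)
The plan is to substitute the exponential ansatz $u_i(t)=q_ie^{\mu t}$ at $\mu=i\omega_c$ into the linearised system~\eqref{eq:linear_dde} and solve directly for $q_1$. The first equation of the eigen-system written above~\eqref{eq:char_eq} is
\[
(i\omega_c+\gamma_1)\,q_1 + B\,e^{-i\omega_c\tau_2}\,q_2 = 0 .
\]
Since $\gamma_1>0$, the factor $i\omega_c+\gamma_1$ is nonzero, so with the normalisation $q_2=1$ we obtain immediately $q_1=-B e^{-i\omega_c\tau_2}/(i\omega_c+\gamma_1)$.

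I would then confirm that this vector is a genuine eigenvector, i.e.\ that the second equation $-Ae^{-i\omega_c\tau_1}q_1+(i\omega_c+\gamma_2)=0$ holds as well. Substituting $q_1$ turns it into
\[
\frac{AB\,e^{-i\omega_c\tau}}{i\omega_c+\gamma_1}+(i\omega_c+\gamma_2)=0 .
\]
This is exactly $\Delta(i\omega_c;\tau_c)=0$, which holds by Theorem~\ref{thm:hopf_existence}. Here $\tau=\tau_1+\tau_2$ is taken at a critical value; the sum-of-delays reduction of~\eqref{eq:char_eq} is what makes the split irrelevant. The kernel is one-dimensional because the coefficient matrix is $2\times2$ with a nonzero entry $i\omega_c+\gamma_1$, so the normalisation $q_2=1$ fixes $\mathbf q$ uniquely.

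For the modulus, I would use $|e^{-i\omega_c\tau_2}|=1$ for real $\tau_2$ and $|i\omega_c+\gamma_1|^2=\omega_c^2+\gamma_1^2$. This gives $|q_1|^2=B^2/(\omega_c^2+\gamma_1^2)$, which depends only on $B$, $\gamma_1$ and $\omega_c$. By Lemma~\ref{lem:omega}, $\omega_c$ is a function of $A,B,\gamma_1,\gamma_2$ alone. Hence $|q_1|^2$ is independent of $\tau$ and of the splitting $(\tau_1,\tau_2)$, while the phase $\arg q_1$ carries the factor $e^{-i\omega_c\tau_2}$.

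As an optional cross-check, I would use the factorisation~\eqref{eq:LHS_factorisation} together with the Hopf condition $(\omega_c^2+\gamma_1^2)(\omega_c^2+\gamma_2^2)=(AB)^2$. This yields the equivalent form $|q_1|^2=(\omega_c^2+\gamma_2^2)/A^2$, consistent with the second eigen-equation. The canonical value follows by plugging in $B\approx1.967$, $\omega_c\approx2.353$ and $\gamma_1=0.25$: the numerator is $3.869$ and the denominator is $5.599$, giving about $0.691$.

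None of these steps is a real obstacle. The only point needing care is the consistency of the two eigen-equations, which must be checked against the characteristic equation rather than assumed.
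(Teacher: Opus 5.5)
Your proposal is correct and follows the paper's own proof: solve the first row of the eigen-system for $q_1$ with $q_2=1$, then take the modulus using $|e^{-i\omega_c\tau_2}|=1$. Your check that the second row reduces to $\Delta(i\omega_c;\tau)=0$ is valid and is left implicit in the paper, and so is your equivalent form $|q_1|^2=(\omega_c^2+\gamma_2^2)/A^2$.
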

\begin{proof}
The eigenvector equation reads
$(i\omega_c+\gamma_1)q_1 + B\,e^{-i\omega_c\tau_2}q_2 = 0$ (the first
row of $M(i\omega_c)\mathbf{q}=0$). Setting $q_2=1$ gives the
displayed $q_1$. Taking modulus squared:
$|q_1|^{2} = B^{2}|e^{-i\omega_c\tau_2}|^{2}/|i\omega_c+\gamma_1|^{2}
= B^{2}/(\omega_c^{2}+\gamma_1^{2})$, since $|e^{-i\omega_c\tau_2}|=1$.
\end{proof}

On the two-dimensional centre manifold near $\tau_c$, the state
admits the leading-order representation
\begin{equation}
u_i(t) \,=\, 2\,\mathrm{Re}\!\bigl[z(t)\,q_i\bigr] \,+\, O(|z|^{2})
\qquad (i=1,2),
\label{eq:CM_repr}
\end{equation}
where $z(t)$ is a complex amplitude evolving according to the normal
form $\dot{z}=\mu(\tau)\,z + c_1\,z|z|^{2}+O(|z|^{4})$, with
$\mu(\tau)=\mu_c+\mu'(\tau_c)(\tau-\tau_c)+\cdots$ and
$\mathrm{Re}(\mu(\tau_c))=0$. The limit-cycle solution satisfies
$|z|^{2} = -\mathrm{Re}(\mu(\tau))/\mathrm{Re}(c_1)$, and substituting
into~\eqref{eq:CM_repr} gives the amplitude of $u_i$ as
$A_i = 2|q_i|\,|z|$. Hence the squared amplitude in the $i$-th
component scales as
\begin{equation}
A_i^{2}(\tau)
\,=\, -\,\frac{4|q_i|^{2}\,\mathrm{Re}(d\mu/d\tau)|_{\tau_c}}
            {\mathrm{Re}(c_1)}\,(\tau-\tau_c)
\,+\, O\!\bigl((\tau-\tau_c)^{2}\bigr).
\label{eq:Ai_scaling}
\end{equation}
For the canonical example, $q_2=1$ gives the simpler relation
$A_2^{2}(\tau)\sim -4\,\mathrm{Re}(d\mu/d\tau)\,(\tau-\tau_c)/
\mathrm{Re}(c_1)$, but if amplitude is measured in $x_1$ (as in
Figure~\ref{fig:bifurcation} and Table~\ref{tab:amplitudes}) the
$|q_1|^{2}$ factor cannot be omitted. Numerically, the simulations of
Section~\ref{sec:numerical} are consistent with~\eqref{eq:Ai_scaling}
along both components: with $A_1=2|q_1||z|$ and $A_2=2|q_2||z|=2|z|$,
the ratio $A_1/A_2$ in the simulations
($\tau\in\{0.135,0.140,0.145,0.150,0.165\}$) lies in
$[0.815,0.832]$, in agreement with the predicted
$|q_1|\approx 0.831$ to within $2\%$.

Solving~\eqref{eq:Ai_scaling} for $\mathrm{Re}(c_1)$ using the
amplitude $c$ measured in $x_1$:
\begin{equation}
\mathrm{Re}(c_1)
\,=\, -\,\frac{4|q_1|^{2}\,\mathrm{Re}(d\mu/d\tau)|_{\tau_c}}{c^{2}}
\,=\, -\,\frac{4\,B^{2}\,\mathrm{Re}(d\mu/d\tau)|_{\tau_c}}
              {(\omega_c^{2}+\gamma_1^{2})\,c^{2}}.
\label{eq:Re_c1_formula}
\end{equation}
For the canonical example this evaluates to
\begin{equation}
\mathrm{Re}(c_1) \,\approx\, -\,\frac{4(1.967)^{2}\,(2.58)}
                                     {(5.537+0.0625)\,(5.60)}
\,\approx\, -1.27,
\label{eq:Re_c1_value}
\end{equation}
in close agreement with the direct numerical extraction by simulation
(values $\mathrm{Re}(c_1)\in[-1.27,-1.25]$ across
$\tau\in[0.135,0.165]$, with the plateau as $\tau\downarrow\tau_c^{+}$
confirming~\eqref{eq:Re_c1_value} to within $2\%$). The strictly
negative sign identifies the bifurcation as supercritical
(in the Hopf--Andronov sense): the limit cycle bifurcating from
$(x_1^{*},x_2^{*})$ at $\tau=\tau_c$ is \emph{stable}, and the
equilibrium loses stability cleanly without an intervening hysteresis
loop. The value $\mathrm{Re}(c_1)\approx -1.27$ is the analogue, in
the present two-gene logistic context, of the Lindstedt-derived
Hopf-radius constant that Verdugo and Rand~\cite{verdugo2008hopf} and
Rand and Verdugo~\cite{rand2007hopf} computed for single-gene
Hill-based autoregulators; it serves as a quantitative target against
which a fully analytical centre-manifold reduction
(\S\ref{sec:analytical_supercrit}) can be verified.

\begin{remark}[Comparison with earlier reports]
\label{rem:c1_correction}
A naive application of the relation
$c^{2}=-\mathrm{Re}(d\mu/d\tau)/\mathrm{Re}(c_1)$ (without the
$|q_1|^{2}$ factor) gives $\mathrm{Re}(c_1)\approx -0.46$. This
identification of the empirical amplitude $A$ with the centre-manifold
radius $|z|$ is correct only when amplitude is measured in the
$x_2$-component (for which $q_2=1$); when measured in $x_1$, the
factor $4|q_1|^{2}\approx 2.76$ in~\eqref{eq:Ai_scaling} brings the
extracted value of $\mathrm{Re}(c_1)$ into agreement with the direct
simulation. The component-independent normal-form quantity is
$\mathrm{Re}(c_1)$, not its coordinate-dependent rescaling.
\end{remark}

%%%%%%%%%%%%%%%%%%%%%%%%%%%%%%%%%%%%%%%%%%%%%%%%%%%%%%%%%%%%%%%%%%%%%%%%%%%%
\section{Analytical determination of criticality: the first Lyapunov coefficient}
\label{sec:analytical_supercrit}

We have established supercriticality first numerically by the
$\sqrt{\tau-\tau_c}$ scaling of the limit-cycle amplitude
(Section~\ref{sec:numerical}, Remark~\ref{rem:supercritical}), and then
quantitatively in Section~\ref{sec:l1_numerics} by combining this
amplitude scaling with the closed-form transversality value and the
closed-form eigenvector amplitude~\eqref{eq:q1_amplitude}: this gives
$\mathrm{Re}(c_1)\approx -1.27$ for the canonical example, with the
strictly negative sign confirming supercriticality. Complementing this,
the period analysis of Section~\ref{sec:period_onset} yields
$\mathrm{Im}(c_1)\approx -4.37$ from the empirical onset slope of
$T(\tau)$ combined with the closed-form expression~\eqref{eq:Im_dmu_dtau}
for $\mathrm{Im}(d\mu/d\tau)|_{\tau_c}$. The two components together
characterise the leading cubic Hopf coefficient
\[
c_1 \;=\; \mathrm{Re}(c_1) + i\,\mathrm{Im}(c_1)
\;\approx\; -1.27 - 4.37\,i,
\]
with $\arg(c_1)\approx -106^\circ$ and $|c_1|\approx 4.55$. The
component-independent quantity is the ratio
$\mathrm{Im}(c_1)/\mathrm{Re}(c_1)\approx 3.44$
(Remark~\ref{rem:c1_correction}), which carries the geometric content
of the bifurcation. A fully analytical derivation of $c_1$ requires
centre-manifold reduction of the
DDE~\cite{hassard1981theory,kuang1993delay}, which involves the second-
and third-order Taylor expansions of $f^+,f^-$ at equilibrium. The
relevant derivatives have closed-form logistic expressions:
\begin{equation}
\begin{aligned}
(f^+)' &= \lambda\, f^+(1-f^+),\\
(f^+)'' &= \lambda^2\, f^+(1-f^+)(1-2 f^+),\\
(f^+)''' &= \lambda^3\, f^+(1-f^+)\bigl[1 - 6\,f^+(1-f^+)\bigr],
\end{aligned}
\label{eq:logistic_derivs}
\end{equation}
which give a closed-form expression for the cubic coefficient on the
centre manifold and hence for both $\mathrm{Re}(c_1)$ and
$\mathrm{Im}(c_1)$. We further record the resulting closed-form
Taylor coefficients of the nonlinearity at the equilibrium, which
serve as the building blocks of any centre-manifold computation:
\begin{equation}
\begin{aligned}
b_1 &= \tfrac{1}{2}\,B\,\lambda\,(1-2 f^-_*),
&\quad
d_1 &= -\,\tfrac{1}{6}\,B\,\lambda^{2}\bigl[1-6\,f^-_*(1-f^-_*)\bigr],
\\[3pt]
b_2 &= \tfrac{1}{2}\,A\,\lambda\,(1-2 f^+_*),
&\quad
d_2 &= +\,\tfrac{1}{6}\,A\,\lambda^{2}\bigl[1-6\,f^+_*(1-f^+_*)\bigr],
\end{aligned}
\label{eq:Taylor_coeffs}
\end{equation}
with $A,B$ the linear loop coefficients from~\eqref{eq:AB_def}; the
quadratic coefficients $b_1,b_2$ vanish exactly at the symmetric
thresholds $\theta_i=M_i/2$ (where $f^{+}_*=f^{-}_*=1/2$), and the
cubic coefficients $d_1,d_2$ change sign at
$f^{\pm}_*(1-f^{\pm}_*)=1/6$.

We carry out the reduction in two stages. In the
\emph{symmetric-threshold} regime $\theta_i=M_i/2$ the quadratic
coefficients vanish, the reduction becomes fully tractable, and we prove
supercriticality outright (Theorem~\ref{thm:supercritical}). For the
\emph{general} loop the quadratic coefficients $b_1,b_2$ no longer
vanish, but the same Lindstedt--Poincar\'e scheme still closes: it
produces a closed-form first Lyapunov coefficient and an explicit
criticality criterion (Theorem~\ref{thm:general_lyapunov}).

\subsection{Symmetric-threshold supercriticality}
\label{sec:symmetric_supercrit}

\begin{theorem}[Supercriticality and amplitude law at symmetric thresholds]
\label{thm:supercritical}
Consider the delayed loop~\eqref{eq:dde} at symmetric thresholds
$\theta_1=M_1/2$, $\theta_2=M_2/2$, in the strong-feedback regime
$AB>\gamma_1\gamma_2$. As the total delay $\tau$ increases through
$\tau_c$, the Hopf bifurcation of Theorem~\ref{thm:hopf_main} is
\emph{supercritical}: a unique orbitally asymptotically stable limit
cycle bifurcates into $\tau>\tau_c$. To leading order its $x_1$-amplitude
$A_1$ and angular frequency $\Omega$ obey the closed-form laws
\begin{equation}
A_1(\tau)=\frac{4\omega_c}{\lambda}
\sqrt{\frac{S_1}{(1+|q_2|^{2})\,(S_2+\tau_c)}}\;\sqrt{\tau-\tau_c}
\;+\;o\!\left(\sqrt{\tau-\tau_c}\right),
\label{eq:amplitude_law}
\end{equation}
\begin{equation}
\Omega(\tau)=\omega_c-\frac{\omega_c}{S_2+\tau_c}\,(\tau-\tau_c)
\;+\;o(\tau-\tau_c),
\label{eq:frequency_law}
\end{equation}
where $|q_2|^{2}=(\omega_c^{2}+\gamma_1^{2})/B^{2}$ and
$S_1=\sum_{j=1}^{2}(\omega_c^{2}+\gamma_j^{2})^{-1}$,
$S_2=\sum_{j=1}^{2}\gamma_j(\omega_c^{2}+\gamma_j^{2})^{-1}$ are the
linear-theory sums over the two genes. The $x_2$-amplitude is
$A_2=|q_2|\,A_1$. In particular the bifurcation is supercritical for
\emph{every} parameter choice in the symmetric-threshold strong-feedback
region, and the period lengthens as $\tau$ increases.
\end{theorem}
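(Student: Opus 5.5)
At symmetric thresholds the equilibrium is the box centre, $x_i^*=M_i/2$. Then $f^\pm_*=1/2$, so the quadratic Taylor coefficients $b_1,b_2$ of~\eqref{eq:Taylor_coeffs} vanish. The cubic coefficients become $d_1=B\lambda^2/12$ and $d_2=-A\lambda^2/12$, since $1-6\cdot\tfrac14=-\tfrac12$. By the sum-of-delays symmetry (Proposition~\ref{prop:sum_symmetry}) I would place the whole delay on one branch, say $\tau_2=\tau$ and $\tau_1=0$. This changes neither amplitudes nor the period, and it makes the algebra cleaner. The truncated system is then
\[
\dot u_1=-\gamma_1u_1-Bu_2(t-\tau)+\tfrac{B\lambda^2}{12}u_2(t-\tau)^3,
\qquad
\dot u_2=-\gamma_2u_2+Au_1-\tfrac{A\lambda^2}{12}u_1^3 .
\]
The key observation is that the cubic factor is $-\lambda^2/12$ times the linear coupling on both branches. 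The nonlinearity therefore acts like a uniform amplitude-dependent reduction of the loop gain.

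I would run a Lindstedt--Poincar\'e expansion. Set $u=\varepsilon u^{(1)}+\varepsilon^3u^{(3)}+\cdots$, $s=\Omega t$, $\Omega=\omega_c+\varepsilon^2\Omega_2$ and $\tau=\tau_c+\varepsilon^2\tau_2'$. The even orders vanish identically because of oddness. At order $\varepsilon$, take $u^{(1)}=a\,\mathrm{Re}(\mathbf q e^{is})$ with $\mathbf q$ the critical eigenvector, normalised to have unit component in whichever variable carries the stated amplitude (the proposition normalises $q_2=1$, and the statement's $|q_2|^2=(\omega_c^2+\gamma_1^2)/B^2$ corresponds to $q_1=1$). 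At order $\varepsilon^3$ the first-harmonic resonant forcing has three sources: the frequency shift, the delay shift (through $e^{-i\omega\tau}$), and the cubic terms. The cubic terms contribute $\tfrac34 a^3|q_j|^2 q_j$ times the linear coefficients.

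The solvability condition comes from projecting the order-$\varepsilon^3$ forcing onto the left null vector of the characteristic matrix at $i\omega_c$. The cubic contributions enter as perturbations of $A$ and $B$, namely $A\to A(1-\tfrac{\lambda^2}{16}a^2|q_1|^2)$ and similarly for $B$. So the resonance condition reduces to perturbing the scalar characteristic function~\eqref{eq:char_eq}:
\[
\partial_\mu\Delta\cdot i\Omega_2+\partial_\tau\Delta\cdot\tau_2'-AB\,e^{-i\omega_c\tau_c}\tfrac{\lambda^2a^2}{16}\bigl(|q_1|^2+|q_2|^2\bigr)=0 .
\]
Here $AB\,e^{-i\omega_c\tau_c}=-(i\omega_c+\gamma_1)(i\omega_c+\gamma_2)$. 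Dividing by this product, I would use the identity
\[
\frac{\partial_\mu\Delta}{\Delta_0}=\sum_j\frac{1}{i\omega_c+\gamma_j}+\tau_c .
\]
Its real part is $S_2+\tau_c$ and its imaginary part is $-\omega_cS_1$, which is exactly where $S_1$ and $S_2$ enter. Likewise $\partial_\tau\Delta/\Delta_0=-i\omega_c$. Separating real and imaginary parts gives two real equations. The imaginary part yields $\omega_cS_1\Omega_2$ in terms of the remaining quantities. The real part gives $\Omega_2(S_2+\tau_c)=-\omega_c\tau_2'$, which is~\eqref{eq:frequency_law}. Eliminating $\Omega_2$ then gives $a^2$ as a positive multiple of $\tau_2'\,S_1/(S_2+\tau_c)$, the square of~\eqref{eq:amplitude_law}, up to fixing the constant $16\omega_c^2/\lambda^2$.

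Supercriticality follows because $a^2>0$ forces $\tau_2'>0$: the branch exists only for $\tau>\tau_c$ for every admissible parameter set. Combined with the transversality of Theorem~\ref{thm:transversality}, this says the Lyapunov coefficient $\mathcal T$ has fixed sign, and orbital stability follows from the standard normal-form correspondence. The same real-part equation gives $\Omega_2<0$, so the period lengthens. I expect the main obstacle to be the bookkeeping in this step: tracking the delayed phase factor $e^{-i\omega_c\tau_c}$ and the sign conventions in the real/imaginary split, so that $S_1$ rather than $S_2$ lands in the numerator and the overall sign comes out positive. I would check the final constants against the canonical numerics as a sanity test.
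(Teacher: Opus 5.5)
Your route is the paper's. At $\theta_i=M_i/2$ the quadratic coefficients vanish, the Lindstedt--Poincar\'e expansion runs in odd powers of $\varepsilon$, and the order-$\varepsilon^3$ Fredholm condition you set up becomes exactly the paper's equation $i\Omega_2[(S_2+\tau_c)-i\omega_cS_1]+i\omega_c\mathcal{T}+\tfrac14\lambda^2(1+|q_2|^2)=0$. This needs three things: normalise by $P(i\omega_c)=(i\omega_c+\gamma_1)(i\omega_c+\gamma_2)$, take $a=2$, and apply the sign correction below. Putting the whole delay on one branch instead of splitting it evenly is harmless.

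Treating the cubic forcing as a loop-gain perturbation is a cleaner derivation than the paper's bare ``projecting and simplifying yields''. It also makes split-invariance manifest, since only $\Delta(\mu,\tau)$ and $|q_j|^2$ enter. To make your ``so the resonance condition reduces to perturbing $\Delta$'' rigorous, say why a projection onto $p$ becomes a derivative of the scalar $\Delta$:
\begin{itemize}
\item $i\omega_c$ is a simple root, so $M(i\omega_c)$ has rank one and $\mathrm{adj}\,M(i\omega_c)=c\,q\,p^{T}$ with $c\neq 0$.
\item Jacobi's formula then makes $p^{T}Xq=c^{-1}\mathrm{tr}(\mathrm{adj}\,M\,X)$ the directional derivative of $\det M=\Delta$ along $X$. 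This holds simultaneously for the $\Omega_2$-, $\mathcal{T}$- and gain-perturbations.
\item The cubic forcing is exactly a gain perturbation applied to $q$, because $d_1=-\tfrac{\lambda^2}{12}(-B)$, $d_2=-\tfrac{\lambda^2}{12}A$ and $|q_je^{-i\varphi_j}|^2=|q_j|^2$.
\end{itemize}

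There is, however, a sign slip exactly where the theorem lives. Your first identity uses the normalisation $\Delta_0=P(i\omega_c)$. Under it, $\partial_\tau\Delta=-\mu AB\,e^{-\mu\tau}=\mu P(\mu)$, so $\partial_\tau\Delta/\Delta_0=+i\omega_c$. Your $-i\omega_c$ comes from dividing that one term by $AB\,e^{-i\omega_c\tau_c}=-\Delta_0$.

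Your real and imaginary parts are also swapped:
\begin{itemize}
\item The real part is $\omega_cS_1\Omega_2+\tfrac{\lambda^2a^2}{16}(1+|q_2|^2)=0$. This gives $\Omega_2<0$ unconditionally.
\item The imaginary part is $(S_2+\tau_c)\Omega_2+\omega_c\tau_2'=0$.
\end{itemize}
With $-i\omega_c$ as written, the imaginary part would instead read $(S_2+\tau_c)\Omega_2=\omega_c\tau_2'$. That forces $\tau_2'<0$, i.e.\ a \emph{subcritical} branch. The relation $\Omega_2(S_2+\tau_c)=-\omega_c\tau_2'$ you state is the correct one, and it gives the stated amplitude and frequency laws. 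But it does not follow from the identity you wrote, so fix that identity rather than the conclusion.

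A minor point: $\mathcal{T}$ is the delay-unfolding coefficient, not the Lyapunov coefficient; the Lyapunov coefficient has the sign of $-\mathcal{T}$.
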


\begin{proof}
At symmetric thresholds $f^{\pm}_*=1/2$, so by~\eqref{eq:Taylor_coeffs}
the quadratic coefficients vanish, $b_1=b_2=0$, and the deviation
$u_i=x_i-x_i^{*}$ obeys the purely cubic system
\begin{equation*}
\dot u_1=-\gamma_1 u_1-B\,u_2(t-\tau_2)+d_1\,u_2(t-\tau_2)^{3},
\qquad
\dot u_2=-\gamma_2 u_2+A\,u_1(t-\tau_1)+d_2\,u_1(t-\tau_1)^{3},
\end{equation*}
with $d_1=B\lambda^{2}/12$, $d_2=-A\lambda^{2}/12$. This system is
invariant under $u\mapsto-u$, so the bifurcating orbit satisfies
$u(t+\pi/\Omega)=-u(t)$: only odd harmonics occur and the
Lindstedt--Poincar\'e expansion proceeds in odd powers of a small
amplitude parameter $\varepsilon$. By Proposition~\ref{prop:sum_symmetry}
the periodic solution depends only on $\tau=\tau_1+\tau_2$, so we set
$\tau_1=\tau_2=\tau/2$. Rescaling time by $s=\Omega t$ and writing
\[
u=\varepsilon\,u^{(1)}+\varepsilon^{3}u^{(3)}+O(\varepsilon^{5}),
\qquad
\Omega=\omega_c+\varepsilon^{2}\Omega_2+O(\varepsilon^4),
\qquad
\tau=\tau_c+\varepsilon^{2}\mathcal{T}+O(\varepsilon^4),
\]
the delayed argument carries the phase
$\Omega\tau/2=\tfrac12\omega_c\tau_c+\varepsilon^{2}\tfrac12(\omega_c\mathcal{T}+\Omega_2\tau_c)+O(\varepsilon^{4})$.
At order $\varepsilon$ one recovers the neutral mode
$u^{(1)}=q\,e^{is}+\bar q\,e^{-is}$ with $q=(1,q_2)$ the Hopf eigenvector
of Section~\ref{sec:hopf}. At order $\varepsilon^{3}$ the equation has
the form $\mathcal{L}\,u^{(3)}=\mathcal{R}$, where $\mathcal{L}$ is the
linearised operator at criticality and the forcing $\mathcal{R}$
collects the detuning terms (proportional to $\Omega_2$ and $\mathcal{T}$) and the
cubic terms $d_i[u^{(1)}]^{3}$. Since $\mathcal{L}(Ve^{is})=(\Delta(i\omega_c)V)e^{is}$
and $\Delta(i\omega_c)$ is singular, a $2\pi$-periodic $u^{(3)}$ exists
iff the resonant first harmonic of $\mathcal{R}$ is orthogonal to the
left null vector $p$ of $\Delta(i\omega_c)$ --- the Fredholm solvability
condition. Projecting the $e^{is}$ component of $\mathcal{R}$ onto $p$
and simplifying with the characteristic identity
$(i\omega_c+\gamma_1)(i\omega_c+\gamma_2)=-AB\,e^{-i\omega_c\tau_c}$ yields
the single complex equation
\[
i\,\Omega_2\Bigl[(S_2+\tau_c)-i\omega_c S_1\Bigr]
\;+\;i\,\omega_c\mathcal{T}\;+\;\tfrac14\lambda^{2}\bigl(1+|q_2|^{2}\bigr)=0 .
\]
Its real and imaginary parts give, respectively,
\[
\Omega_2=-\,\frac{\lambda^{2}\bigl(1+|q_2|^{2}\bigr)}{4\,\omega_c S_1}\;<\;0,
\qquad
\mathcal{T}=-\,\frac{\Omega_2\,(S_2+\tau_c)}{\omega_c}
=\frac{\lambda^{2}\bigl(1+|q_2|^{2}\bigr)\,(S_2+\tau_c)}{4\,\omega_c^{2}S_1}\;>\;0 .
\]
Every factor in $\mathcal{T}$ is strictly positive, so $\mathcal{T}>0$ unconditionally:
the relation $\tau-\tau_c=\varepsilon^{2}\mathcal{T}$ forces $\varepsilon^{2}>0$
precisely for $\tau>\tau_c$, i.e.\ the cycle bifurcates
\emph{supercritically}. Since $u_1^{(1)}=2\cos s$, the $x_1$-amplitude is
$A_1=2\varepsilon+O(\varepsilon^{3})=2\sqrt{(\tau-\tau_c)/\mathcal{T}}$, which is
exactly~\eqref{eq:amplitude_law}; the frequency law~\eqref{eq:frequency_law}
follows from $\Omega=\omega_c+\Omega_2\varepsilon^{2}$ and
$\Omega_2/\mathcal{T}=-\omega_c/(S_2+\tau_c)$, and $A_2=|q_2|A_1$ from the second
component of $q$. Orbital asymptotic stability of the bifurcating cycle
is the standard consequence of a supercritical Hopf bifurcation with
positive transversality (Theorem~\ref{thm:transversality}).
\end{proof}

\begin{remark}[Numerical verification]
\label{rem:supercrit_verification}
Both closed-form laws have been checked against direct integration of
the delay system. For the symmetric-threshold loop with
$(\lambda,\kappa_1,\gamma_1,\kappa_2,\gamma_2)=(3,3,\tfrac14,4,\tfrac12)$
one finds $\omega_c\approx2.568$, $\tau_c\approx0.1127$, and the
amplitude law~\eqref{eq:amplitude_law} predicts
$A_1\approx2.59\,\sqrt{\tau-\tau_c}$; direct DDE integration over
$\tau-\tau_c\in[0.008,0.04]$ reproduces this $\sqrt{\tau-\tau_c}$ scaling
to within $2$--$3\%$, and an independent complex-Newton solve of the
characteristic equation $\Delta(\mu,\tau)=0$ reproduces the analytical
Hopf locus $(\tau_c,\omega_c)$ to six significant figures. The
predicted supercriticality is moreover consistent with the
delay-induced \emph{supercritical} Hopf bifurcation independently
reported for the p53--Mdm2 module --- a two-component delayed negative
feedback loop of exactly the present type --- in the systems-biology
literature.
\end{remark}

\subsection{The general asymmetric loop}
\label{sec:general_supercrit}

\begin{theorem}[General asymmetric first Lyapunov coefficient and criticality]
\label{thm:general_lyapunov}
Consider the delayed loop~\eqref{eq:dde} at \emph{arbitrary} thresholds
$\theta_1,\theta_2$ in the strong-feedback regime $AB>\gamma_1\gamma_2$,
with Hopf eigenvector $q=(1,q_2)$ and Taylor coefficients
$b_j,d_j$ given by~\eqref{eq:Taylor_coeffs}. The Lindstedt--Poincar\'e
reduction of Theorem~\ref{thm:supercritical} extends to this loop. The
order-$\varepsilon^{2}$ correction now carries a zero-frequency vector
$W_0$ and a second-harmonic vector $W_2$, given in closed form by
\begin{equation}
W_0=-\,(L_0+L_1+L_2)^{-1}F_0,
\qquad
W_2=\Delta(2i\omega_c)^{-1}F_2,
\label{eq:W0W2}
\end{equation}
where $L_0=\mathrm{diag}(-\gamma_1,-\gamma_2)$,
$L_1=\bigl(\begin{smallmatrix}0&0\\A&0\end{smallmatrix}\bigr)$,
$L_2=\bigl(\begin{smallmatrix}0&-B\\0&0\end{smallmatrix}\bigr)$,
$\Delta(2i\omega_c)=2i\omega_c I-L_0-L_1e^{-2i\varphi_1}-L_2e^{-2i\varphi_2}$
with $\varphi_j=\omega_c\tau_j$, and the quadratic forcings are
$F_0=(2b_1|q_2|^{2},\,2b_2)^{T}$ and
$F_2=(b_1q_2^{2}e^{-2i\varphi_2},\,b_2e^{-2i\varphi_1})^{T}$.
The order-$\varepsilon^{3}$ Fredholm solvability condition then collapses
to the single complex equation
\begin{equation}
-\,i\,P\,\Omega_2\;+\;i\,\omega_c\,Q_0\,\mathcal{T}\;+\;\mathcal{G}\;=\;0,
\label{eq:general_solvability}
\end{equation}
in which $P,Q_0\in\mathbb{C}$ are the explicit linear-response
coefficients~\eqref{eq:PQ_def} and
$\mathcal{G}=\mathcal{G}_{\mathrm{cub}}+\mathcal{G}_{\mathrm{quad}}\in\mathbb{C}$
is the projected cubic-order forcing, the term $\mathcal{G}_{\mathrm{quad}}$
collecting the quadratic cross-mixing of $u^{(1)}$ with $W_0,W_2$.
Equation~\eqref{eq:general_solvability} splits into a $2\times2$ real
linear system whose unique solution fixes the delay-unfolding
coefficient $\mathcal{T}$ and the frequency correction $\Omega_2$; the first
Lyapunov coefficient has the sign of $-\mathcal{T}$, so the Hopf bifurcation is
\begin{equation}
\textbf{supercritical} \iff \mathcal{T}>0 ,
\label{eq:general_criterion}
\end{equation}
and the amplitude and frequency laws~\eqref{eq:amplitude_law}--\eqref{eq:frequency_law}
hold verbatim with this general $\mathcal{T}$ and $\Omega_2$. At symmetric
thresholds ($b_1=b_2=0$) one has $W_0=W_2=0$,
$P=-(S_2+\tau_c)+i\omega_c S_1$, $Q_0=1$ and
$\mathcal{G}=\tfrac14\lambda^{2}(1+|q_2|^{2})>0$, and
\eqref{eq:general_solvability} reduces \emph{exactly} to the solvability
equation of Theorem~\ref{thm:supercritical}, recovering its
unconditionally positive $\mathcal{T}$.
\end{theorem}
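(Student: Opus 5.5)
The plan is to rerun the Lindstedt--Poincar\'e scheme of Theorem~\ref{thm:supercritical}, now keeping the quadratic Taylor terms $b_j$ from~\eqref{eq:Taylor_coeffs}. By Proposition~\ref{prop:sum_symmetry} the delays can be split in any convenient way, so we keep general $\tau_1,\tau_2$ with phases $\varphi_j=\omega_c\tau_j$. We rescale time by $s=\Omega t$ and expand
\[
u=\varepsilon u^{(1)}+\varepsilon^2u^{(2)}+\varepsilon^3u^{(3)}+\cdots,\qquad
\Omega=\omega_c+\varepsilon^2\Omega_2+\cdots,\qquad
\tau=\tau_c+\varepsilon^2\mathcal{T}+\cdots .
\]
We then write the deviation equations as
\[
\dot u_1=-\gamma_1u_1-Bu_2(t-\tau_2)+b_1u_2(t-\tau_2)^2+d_1u_2(t-\tau_2)^3,
\]
and analogously for $u_2$ with $A$, $b_2$, $d_2$ and the delay $\tau_1$. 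A first-order detuning $\Omega_1$ or $\tau$-shift at order $\varepsilon$ would be forced to vanish by the same Fredholm argument. Hence no $O(\varepsilon)$ corrections are included.

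At order $\varepsilon$ we recover $u^{(1)}=qe^{is}+\bar qe^{-is}$ with $q=(1,q_2)$, exactly as before. At order $\varepsilon^2$ the forcing is $b_1[u_2^{(1)}(s-\varphi_2)]^2$ in the first row and $b_2[u_1^{(1)}(s-\varphi_1)]^2$ in the second. Expanding the squares produces only harmonics $0$ and $\pm2$, with no resonant part, so $u^{(2)}=W_0+W_2e^{2is}+\bar W_2e^{-2is}$ exists uniquely.

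\begin{itemize}
\item The constant part of the forcing is $F_0=(2b_1|q_2|^2,\,2b_2)^T$. It solves $(L_0+L_1+L_2)W_0+F_0=0$, which gives $W_0$ as in~\eqref{eq:W0W2}.
\item The matrix $L_0+L_1+L_2$ has determinant $\gamma_1\gamma_2+AB\neq0$, so it is invertible.
\item The second-harmonic part solves $\Delta(2i\omega_c)W_2=F_2$, with the delay factors $e^{-2i\varphi_j}$ appearing inside $F_2$.
\item Invertibility of $\Delta(2i\omega_c)$ follows from Lemma~\ref{lem:omega}: $2\omega_c\neq\omega_c$ is not a root frequency, so $\det\Delta(2i\omega_c)=(2i\omega_c+\gamma_1)(2i\omega_c+\gamma_2)+ABe^{-2i\omega_c\tau_c}\neq0$, because its modulus condition fails.
\end{itemize}

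At order $\varepsilon^3$ we collect the $e^{is}$ component of the forcing, which has three parts.

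\begin{enumerate}
\item \textbf{Detuning terms.} The term $-i\Omega_2 q$ comes from $\dot u$. The delay-phase variation $\Omega_2\tau_j+\omega_c\mathcal{T}_j$ enters through the derivatives of $L_je^{-i\varphi_j}$, with $\mathcal{T}_1+\mathcal{T}_2=\mathcal{T}$.
\item \textbf{Cubic terms.} These are $3d_1|q_2|^2q_2e^{-i\varphi_2}$ in the first row, and similarly $3d_2e^{-i\varphi_1}$ in the second.
\item \textbf{Quadratic cross terms.} These are $2b_1\bigl(W_{0,2}\,q_2e^{-i\varphi_2}+W_{2,2}\,\bar q_2e^{-i\varphi_2}\bigr)$, and likewise in the second row with $b_2$ and $q_1=1$.
\end{enumerate}

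We pair this component with the left null vector $p$ of $\Delta(i\omega_c)$. Writing $P:=p\,\partial_\Omega$ of the detuning operator applied to $q$ and $Q_0$ for the normalized $\mathcal{T}$-coefficient yields~\eqref{eq:general_solvability}, with $\mathcal{G}=\mathcal{G}_{\rm cub}+\mathcal{G}_{\rm quad}$ defined as the projections of parts 2 and 3.

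\begin{itemize}
\item The $\mathcal{T}$-coefficient depends only on $\mathcal{T}_1+\mathcal{T}_2$: by the characteristic identity both branches contribute $i\omega_c$ times the same scalar $p\,\cdot$(loop term)$\,q$. This is the linear-level shadow of Proposition~\ref{prop:sum_symmetry}.
\item The resulting equation splits into real and imaginary parts, giving a $2\times2$ real system in $(\Omega_2,\mathcal{T})$.
\item This system is uniquely solvable iff $\mathrm{Im}(\bar PQ_0\omega_c)\neq0$ (up to sign convention). After normalizing $Q_0=1$, this amounts to the $\omega_c S_1$ part of $P$, equivalently transversality. I would argue it via Theorem~\ref{thm:transversality}: the same pairing $p\,\partial_\mu\Delta\, q$ appears in $d\mu/d\tau$, so the determinant is a positive multiple of $\mathrm{Re}(d\mu/d\tau)|_{\tau_c}>0$.
\end{itemize}

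The criticality conclusion then follows as in Theorem~\ref{thm:supercritical}. From $\tau-\tau_c=\varepsilon^2\mathcal{T}$, the branch exists for $\tau>\tau_c$ iff $\mathcal{T}>0$. Comparing with the normal form, $|z|^2=-\mathrm{Re}(\mu)/\mathrm{Re}(c_1)$ with $\mathrm{Re}\,\mu\approx\mathrm{Re}(\mu')(\tau-\tau_c)$, shows that $\mathrm{sign}\,\mathrm{Re}(c_1)=-\mathrm{sign}\,\mathcal{T}$. The amplitude law follows as $A_1=2\varepsilon$, since $q_1=1$ and $u^{(2)}$ only shifts the mean and adds the second harmonic at higher order. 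The frequency law follows from $\Omega_2/\mathcal{T}$.

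For the symmetric check, $b_j=0$ forces $F_0=F_2=0$, hence $W_0=W_2=0$ and $\mathcal{G}_{\rm quad}=0$. The remaining pieces $P$, $Q_0$ and $\mathcal{G}_{\rm cub}$ coincide with the earlier computation, so the general equation reduces to the symmetric one.

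The main obstacle is the bookkeeping of the projection at order $\varepsilon^3$. One must normalize $p$ so that the $\mathcal{T}$-coefficient becomes $i\omega_cQ_0$, and verify that $P$ reduces exactly to $-(S_2+\tau_c)+i\omega_cS_1$ in the symmetric case. That reduction requires the identities $|i\omega_c+\gamma_j|^2=\omega_c^2+\gamma_j^2$ and~\eqref{eq:LHS_factorisation}, together with $(\omega_c^2+\gamma_1^2)(\omega_c^2+\gamma_2^2)=A^2B^2$, to turn $p\cdot q$-type sums into $S_1$ and $S_2$. I would first verify this in the symmetric case and only then append the $b_j$ contributions, which enter linearly through $\mathcal{G}_{\rm quad}$.
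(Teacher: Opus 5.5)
Your proposal is correct and follows the paper's route: the same Lindstedt--Poincar\'e ansatz with $u^{(2)}=W_0+W_2e^{2is}+\overline{W_2}e^{-2is}$, the same cubic and quadratic cross-term forcings projected onto the left null vector $(\varpi,1)$ of $\Delta(i\omega_c)$, the same real/imaginary splitting into a $2\times2$ system, and the same collapse at $b_1=b_2=0$. Beyond the paper, you justify three points it only asserts:
\begin{itemize}
\item invertibility of $\Delta(2i\omega_c)$;
\item split-independence of the $\mathcal{T}$-coefficient: indeed $B\zeta\varpi=-Ae^{-i\varphi_1}$, so the unnormalised $Q_0$ equals $-Ae^{-i\varphi_1}$, and dividing by it gives $Q_0=1$ and $P=-(S_2+\tau_c)+i\omega_cS_1$ for every split;
\item nondegeneracy of the $2\times2$ system, since after normalisation $\operatorname{Im}(\bar PQ_0)=-\omega_cS_1\neq0$.
\end{itemize}
Your nondegeneracy criterion is the correct one; the paper's displayed matrix should have second column $(-\omega_c\operatorname{Im}Q_0,\,\omega_c\operatorname{Re}Q_0)^{T}$.
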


\begin{proof}
The expansion of Theorem~\ref{thm:supercritical} is repeated without
imposing $b_1=b_2=0$. At order $\varepsilon^{2}$ the quadratic terms
$b_j[u^{(1)}]^{2}$ act as forcing; since $[u^{(1)}]^{2}$ contains a
constant and an $e^{2is}$ part, the second-order solution is
$u^{(2)}=W_0+W_2e^{2is}+\overline{W_2}e^{-2is}$ with $W_0,W_2$
solving the algebraic systems~\eqref{eq:W0W2} --- non-resonant because
$0$ and $2i\omega_c$ are not characteristic roots. At order
$\varepsilon^{3}$ the first-harmonic forcing comprises the detuning
terms (linear in $\Omega_2$ and $\mathcal{T}$) and two genuinely nonlinear
contributions: the cubic self-interaction $d_j[u^{(1)}]^{3}$ and the
quadratic cross-interaction $2b_ju^{(1)}u^{(2)}$, the latter mixing the
neutral mode with $W_0$ and $W_2$. Projecting the resonant component
onto the left null vector of $\Delta(i\omega_c)$, with the adjoint
weight $\varpi=A\,e^{-i\varphi_1}/(i\omega_c+\gamma_1)$ and
$\zeta=q_2e^{-i\varphi_2}$, gives the detuning coefficients
\begin{equation}
\begin{aligned}
P   &= (\varpi+q_2)-B\zeta\varpi\,\tau_2+A\,e^{-i\varphi_1}\tau_1,\\
Q_0 &= B\zeta\varpi\,(\tau_2/\tau_c)-A\,e^{-i\varphi_1}(\tau_1/\tau_c),
\end{aligned}
\label{eq:PQ_def}
\end{equation}
and the forcing projection
$\mathcal{G}_{\mathrm{cub}}
=3\bigl[\varpi d_1|q_2|^{2}\zeta+d_2e^{-i\varphi_1}\bigr]$,
$\mathcal{G}_{\mathrm{quad}}
=2\varpi b_1e^{-i\varphi_2}(q_2W_{0,2}+\bar q_2W_{2,2})
+2b_2e^{-i\varphi_1}(W_{0,1}+W_{2,1})$,
whence the solvability condition~\eqref{eq:general_solvability}.
Writing $\mathcal{G}=\mathcal{G}_r+i\mathcal{G}_i$ and separating real
and imaginary parts of~\eqref{eq:general_solvability} yields
\[
\begin{pmatrix}\operatorname{Im}P & \omega_c\operatorname{Re}Q_0\\[2pt]
-\operatorname{Re}P & \omega_c\operatorname{Im}Q_0\end{pmatrix}
\begin{pmatrix}\Omega_2\\ \mathcal{T}\end{pmatrix}
=-\begin{pmatrix}\mathcal{G}_r\\ \mathcal{G}_i\end{pmatrix},
\]
a $2\times2$ system whose determinant
$\omega_c(\operatorname{Im}P\operatorname{Im}Q_0+\operatorname{Re}P\operatorname{Re}Q_0)$
is non-zero in the strong-feedback regime, so $(\Omega_2,\mathcal{T})$ are
uniquely determined. The relation $\tau-\tau_c=\varepsilon^{2}\mathcal{T}$ then
forces $\varepsilon^{2}>0$ for $\tau>\tau_c$ precisely when $\mathcal{T}>0$,
which is~\eqref{eq:general_criterion}; the amplitude and frequency laws
follow as in Theorem~\ref{thm:supercritical}. When $b_1=b_2=0$ the
forcings $F_0,F_2$ vanish, so $W_0=W_2=0$ and
$\mathcal{G}_{\mathrm{quad}}=0$; substituting the symmetric-split
values $\tau_1=\tau_2=\tau_c/2$ into~\eqref{eq:PQ_def} collapses $P$ and
$Q_0$ to $-(S_2+\tau_c)+i\omega_c S_1$ and $1$, and
$\mathcal{G}=\mathcal{G}_{\mathrm{cub}}$ reduces to
$\tfrac14\lambda^{2}(1+|q_2|^{2})$, recovering
Theorem~\ref{thm:supercritical} exactly.
\end{proof}

\begin{remark}[Numerical confirmation of the general coefficient]
\label{rem:general_verification}
Three independent checks confirm the closed
form~\eqref{eq:general_solvability}--\eqref{eq:PQ_def}.
\emph{(i) Split invariance.} Although $P$, $Q_0$ and $\mathcal{G}$ each
depend on the individual delays $\tau_1,\tau_2$, the solution
$(\mathcal{T},\Omega_2)$ of~\eqref{eq:general_solvability} must depend only on the
sum $\tau=\tau_1+\tau_2$ (Proposition~\ref{prop:sum_symmetry}). Evaluating
the formula for the canonical asymmetric loop of
Section~\ref{sec:canonical} over the splits
$\tau_1/\tau_c\in\{0.2,0.35,0.5,0.65,0.8\}$ returns
$\mathcal{T}=0.69589401$ and $\Omega_2=-6.83900535$ \emph{identically to eight
significant figures} --- a stringent consistency test that a sign or
phase error in the second-harmonic term would violate.
\emph{(ii) Reduction.} Setting $b_1=b_2=0$ reproduces the closed form of
Theorem~\ref{thm:supercritical} to machine precision.
\emph{(iii) Direct integration.} For the canonical loop the formula gives
$\mathcal{T}\approx0.6959>0$ (supercritical), while a least-squares fit of the
$A_1^{2}\propto(\tau-\tau_c)$ scaling to direct DDE integration yields
$\mathcal{T}_{\mathrm{sim}}\approx0.734$, an agreement within $5.4\%$. The
positive sign is moreover consistent with the strictly negative
$\mathrm{Re}(c_1)\approx-1.27$ obtained independently in
Section~\ref{sec:l1_numerics}. Finally, a Monte-Carlo sweep of $3929$
random parameter sets across the strong-feedback region produced
$\mathcal{T}>0$ in \emph{every} case, with minimum value $\mathcal{T}\approx0.065$: this is
strong numerical evidence --- though not a proof --- that the asymmetric
loop, like its symmetric-threshold specialisation, is supercritical
throughout the strong-feedback regime.
\end{remark}

%%%%%%%%%%%%%%%%%%%%%%%%%%%%%%%%%%%%%%%%%%%%%%%%%%%%%%%%%%%%%%%%%%%%%%%%%%%%
\section{Biological validation and cross-model verification}
\label{sec:bio_validation}

The analysis so far has been internally validated: the closed-form Hopf
locus, the transversality value, the amplitude law, and the first
Lyapunov coefficient have each been confirmed against direct integration
of the delay system and against an independent complex-Newton solve of
the characteristic equation (Sections~\ref{sec:numerical}
and~\ref{sec:l1_numerics}, Remarks~\ref{rem:supercrit_verification}
and~\ref{rem:general_verification}). We now subject the model to two
\emph{external} tests: a quantitative calibration against an
experimentally characterised biological oscillator, and a
cross-validation against the standard Hill-function gene-regulatory
model.

\paragraph{Calibration to the p53--Mdm2 oscillator.}
The tumour-suppressor module p53--Mdm2 is the canonical two-component
delayed negative-feedback loop: p53 activates transcription of Mdm2,
Mdm2 promotes degradation of p53, and the transcription--translation
lag closes the loop with a delay --- exactly the activator--repressor
topology of~\eqref{eq:dde}. Single-cell imaging reports a spontaneous
oscillation with a period of roughly $5.5$~h, p53 and Mdm2 protein
half-lives of order $1$~h, and a transcriptional delay of $1$--$2$~h,
with the delay itself identified as a prerequisite for sustained
oscillation~\cite{gevazatorsky2006oscillations,levbaror2000generation}. Taking the
degradation rate from a $1$~h protein half-life,
$\gamma_1=\gamma_2=\ln 2\approx0.693~\mathrm{h}^{-1}$, and choosing the
loop gain so that the \emph{closed-form} critical delay equals the
measured transcriptional delay, $\tau_c=1$~h, the Hopf
condition~\eqref{eq:char_eq} fixes $AB\approx1.72$ and yields a
predicted oscillation period
\[
T_c=\frac{2\pi}{\omega_c}\approx 5.6~\text{h},
\]
within $3\%$ of the experimentally observed $\sim5.5$~h. No parameter
was tuned to the period: the half-life and the delay are taken directly
from measurement, and the period is then a prediction of the closed-form
theory. The model thus reproduces a quantitative feature of a real
genetic oscillator using only biologically measured inputs.

\paragraph{Cross-validation against the Hill-function model.}
Gene regulation is most often modelled with Hill functions
$H^{+}(x)=x^{n}/(\theta^{n}+x^{n})$ rather than the logistic sigmoid.
The two are matched by requiring equal threshold $\theta$ and equal
maximum slope, which fixes the Hill exponent $n=\lambda\theta$. For the
canonical parameters of Section~\ref{sec:canonical} this gives Hill
exponents $n_1=12$, $n_2=9$; recomputing the Hopf locus for the
corresponding delayed Hill loop yields a critical delay, Hopf frequency,
and period that differ from the logistic predictions by only
$5.2\%$, $2.5\%$ and $2.6\%$ respectively. Repeating the comparison at
the milder steepness $\lambda=1.5$ keeps the period discrepancy at
$7.2\%$. The delay-driven Hopf mechanism, and the quantitative location
of its onset, are therefore not artefacts of the logistic
parameterisation: they carry over to the standard Hill description,
while the logistic form retains the analytical advantage --- exploited
throughout this paper --- of closed-form Taylor coefficients
\eqref{eq:Taylor_coeffs} and hence a closed-form first Lyapunov
coefficient.

%%%%%%%%%%%%%%%%%%%%%%%%%%%%%%%%%%%%%%%%%%%%%%%%%%%%%%%%%%%%%%%%%%%%%%%%%%%%
\section{Extensions}
\label{sec:extensions}

The analysis extends naturally beyond the two-gene oscillator. We outline
three directions.

\subsection{Cyclic $N$-gene logistic networks with delays}
\label{sec:n_gene}

The two-gene oscillator is the smallest non-trivial instance of a
cyclic negative-feedback loop. We show that the loop-gain criterion
$AB>\gamma_1\gamma_2$ derived in Section~\ref{sec:hopf} generalises
verbatim to arbitrary $N$, with the loop gain replaced by the product
of all individual link gains and the loop delay replaced by the sum
of all link delays. The resulting characteristic equation retains the
quasi-polynomial form $P(\mu)+Q\,e^{-\mu\tau}=0$, so the entire
machinery of Sections~\ref{sec:hopf}--\ref{sec:transversality} carries
over. The smooth-logistic analysis presented here is the smooth-sigmoid
analytical counterpart of the high-dimensional Glass-network ring
circuits studied
in~\cite{belgacem2025computer,farcot2019chaos}, where the same cyclic
negative-feedback topology supports periodic, quasi-periodic, and
chaotic attractors in the piecewise-affine (step-function) limit.

Consider $N$ genes connected in a single directed cycle:
\begin{equation}
\dot{x}_i(t)
\;=\; -\gamma_i\,x_i(t)
       \,+\, \kappa_i\,
              f^{\varepsilon_i}\!\bigl(x_{i-1}(t-\tau_i);\theta^{(i)},\lambda\bigr),
\qquad i = 1,\ldots,N\;(\bmod\,N),
\label{eq:n_gene_cyclic}
\end{equation}
with positive parameters $\kappa_i,\gamma_i,\lambda$, link delays
$\tau_i\geq 0$, link thresholds $\theta^{(i)}\in\mathbb{R}$, and sign
pattern $\varepsilon_i\in\{+1,-1\}$ on each link. We use a
\emph{link-indexed} convention here: $\tau_i$, $\theta^{(i)}$, and
$\varepsilon_i$ are all associated with the link feeding into gene $i$
from gene $i-1\pmod N$. (This differs from the variable-indexed
convention of Sections~\ref{sec:formulation}--\ref{sec:transversality},
where $\theta_j$ is the threshold on \emph{reading} variable $x_j$. The
two conventions coincide up to renumbering for $N=2$ and produce the
same scalar invariants $\tau=\sum_i\tau_i$ and $\Lambda=\prod_i A_i$
introduced below.) The signs encode whether link $i$ is an activation
($\varepsilon_i=+1$) or repression ($\varepsilon_i=-1$). The cycle is
a \emph{negative-feedback loop} iff the product of signs is negative:
\[
\prod_{i=1}^N \varepsilon_i \,=\, -1
\qquad
\text{(odd number of repressions).}
\]
We assume throughout this subsection that this condition holds; the
positive-feedback case yields a similar but qualitatively different
analysis (multistability rather than oscillations) which is not our
focus.

Suppose~\eqref{eq:n_gene_cyclic} admits a positive equilibrium
$\mathbf{x}^* = (x_1^*,\ldots,x_N^*)$, characterised as the fixed point
of the iterated logistic map
$x_i^* = (\kappa_i/\gamma_i)\,f^{\varepsilon_i}(x_{i-1}^*;\theta^{(i)},\lambda)$
in the $N$-cycle (existence and uniqueness for the canonical parameter
ranges follow from the same contraction/monotonicity arguments as in
Lemma~\ref{lem:eq_unique}~\cite{belgacem2026logistic}; we do not repeat
the proof here). Define the link-gain coefficients
\begin{equation}
A_i \;:=\; \bigl|\,\kappa_i\,(f^{\varepsilon_i})'(x_{i-1}^*;\theta^{(i)},\lambda)\,\bigr|
\;=\; \kappa_i\,\lambda\,
       f^+_{(i)}\!\bigl(1-f^+_{(i)}\bigr)
\;>\;0,
\label{eq:Ai_def}
\end{equation}
where $f^+_{(i)} := f^+(x_{i-1}^*;\theta^{(i)},\lambda)\in(0,1)$ is the
positive-logistic value on the link feeding into gene $i$, and the
identity uses $f^+(1-f^+) = f^-(1-f^-)$. The link gain $A_i$ is
sign-independent: activating and repressing links of the same
magnitude have the same $A_i$, since only the absolute value of the
derivative enters the linearisation.
The total loop delay and total loop gain are
\begin{equation}
\tau \,:=\, \sum_{i=1}^N \tau_i,
\qquad
\Lambda \,:=\, \prod_{i=1}^N A_i.
\label{eq:total_loop}
\end{equation}

\begin{theorem}[Characteristic equation of the cyclic $N$-gene logistic loop]
\label{thm:n_gene_char_eq}
The linearisation of~\eqref{eq:n_gene_cyclic} at $\mathbf{x}^*$ has
characteristic equation
\begin{equation}
\boxed{\;
\prod_{i=1}^N \bigl(\mu+\gamma_i\bigr)
\,+\, \Lambda\, e^{-\mu\tau}
\;=\; 0\;}
\label{eq:n_gene_char_eq}
\end{equation}
which depends on the individual delays $\tau_i$ only through their sum
$\tau=\sum_i\tau_i$.
\end{theorem}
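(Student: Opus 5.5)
The plan is to repeat the two-gene computation of Section~\ref{sec:linearization} for a general cycle. First set $u_i=x_i-x_i^*$ and Taylor-expand each logistic term at the equilibrium. This gives the linear cyclic delay system
\[
\dot u_i(t)=-\gamma_i u_i(t)+\varepsilon_i A_i\,u_{i-1}(t-\tau_i),\qquad i=1,\dots,N \pmod N .
\]
The sign uses $(f^{\pm})'=\pm\lambda f^+(1-f^+)$, so that $\kappa_i (f^{\varepsilon_i})'(x^*_{i-1})=\varepsilon_i A_i$ with $A_i$ as in~\eqref{eq:Ai_def}.

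Next substitute the exponential ansatz $u_i=q_ie^{\mu t}$. This yields the homogeneous system $(\mu+\gamma_i)q_i=\varepsilon_iA_ie^{-\mu\tau_i}q_{i-1}$. Its coefficient matrix $M(\mu)$ is bidiagonal-cyclic: diagonal entries $\mu+\gamma_i$, entry $-\varepsilon_iA_ie^{-\mu\tau_i}$ in position $(i,i-1)$ for $i\ge2$, and the corner entry $-\varepsilon_1A_1e^{-\mu\tau_1}$ in position $(1,N)$.

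The key step is the determinant of such a cyclic matrix. Expanding along the first row, or equivalently using the permutation expansion, only two permutations contribute: the identity and the full $N$-cycle. Hence
\[
\det M(\mu)=\prod_i(\mu+\gamma_i)+\operatorname{sgn}(\sigma_N)\prod_i\bigl(-\varepsilon_iA_ie^{-\mu\tau_i}\bigr).
\]
The sign of an $N$-cycle is $(-1)^{N-1}$, so the second term equals $(-1)^{N-1}(-1)^N\bigl(\prod_i\varepsilon_i\bigr)\Lambda e^{-\mu\sum_i\tau_i}=-\bigl(\prod_i\varepsilon_i\bigr)\Lambda e^{-\mu\tau}$. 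The negative-feedback hypothesis $\prod_i\varepsilon_i=-1$ then gives exactly~\eqref{eq:n_gene_char_eq}.

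Alternatively, and as a cross-check, I would avoid determinants entirely. Chain the scalar relations around the loop to get $q_0=q_N=\prod_i\bigl[\varepsilon_iA_ie^{-\mu\tau_i}/(\mu+\gamma_i)\bigr]q_N$. Any nontrivial solution must have $q_N\neq0$: if some $q_j=0$, propagation forces all to vanish, except possibly where $\mu=-\gamma_i$, and that case I would check separately. The chained relation is equivalent to the same equation.

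The dependence on the $\tau_i$ only through $\tau$ is then immediate, because the exponentials multiply. As in the two-gene case, one can remark that this is the linear shadow of a re-timing symmetry like Proposition~\ref{prop:sum_symmetry}.

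The only step needing care is the sign bookkeeping in the cyclic determinant, namely the parity of the $N$-cycle combined with the $N$ minus signs. I would verify it by checking $N=2$ against~\eqref{eq:char_eq} with $\varepsilon=(+,-)$: this gives $(\mu+\gamma_1)(\mu+\gamma_2)+AB\,e^{-\mu\tau}$, as required. The edge case $\mu=-\gamma_i$ in the chaining argument is harmless in the determinant route, which I would take as the primary proof.
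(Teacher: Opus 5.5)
Your proposal is correct and matches the paper's proof essentially step for step. You linearise via $(f^{\varepsilon_i})'=\varepsilon_i\lambda f^+(1-f^+)$, form the cyclic matrix $M(\mu)$, and keep only the identity and the full $N$-cycle in the Leibniz expansion; the sign $(-1)^{N-1}$, the $N$ minus signs and $\prod_i\varepsilon_i=-1$ then combine to give $+\Lambda e^{-\mu\tau}$. Your chaining cross-check and $N=2$ sanity check are sound extras, and the $\varepsilon$-ordering in the latter is immaterial since only $\prod_i\varepsilon_i$ enters.
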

\begin{proof}
The Jacobian of the right-hand side of~\eqref{eq:n_gene_cyclic} at
$\mathbf{x}^*$ has, after evaluation at the equilibrium,
\[
\frac{\partial\,\mathrm{RHS}_i}{\partial x_j(t)}
   = -\gamma_i\,\delta_{ij},
\qquad
\frac{\partial\,\mathrm{RHS}_i}{\partial x_j(t-\tau_i)}
   = \kappa_i\,(f^{\varepsilon_i})'(x_{i-1}^*)\,
       \delta_{j,\,i-1\,(\bmod\,N)}.
\]
Substituting $u_i(t)=e^{\mu t}v_i$ and using the logistic identity
$(f^{\varepsilon_i})'=\varepsilon_i\,\lambda f^+ f^-$ gives the
linear system $M(\mu)\mathbf{v}=0$ with matrix
\[
M(\mu)_{ij} \;=\; (\mu+\gamma_i)\delta_{ij}
                  \,-\, \varepsilon_i\,A_i\,
                         e^{-\mu\tau_i}\,\delta_{j,\,i-1\,(\bmod\,N)}.
\]
The characteristic equation is $\det M(\mu)=0$. We compute the
determinant by the Leibniz formula. The matrix $M(\mu)$ has nonzero
entries only on the diagonal $(i,i)$ and on the cyclic positions
$(i,i-1\bmod N)$, so the only permutations $\sigma\in S_N$ with
$\prod_i M_{i,\sigma(i)}\neq 0$ are
\begin{enumerate}
\item the identity $\sigma=\mathrm{id}$, with sign $+1$, contributing
      $\prod_i (\mu+\gamma_i)$;
\item the cyclic shift $\sigma(i)=i-1\,(\bmod\,N)$, which is a single
      $N$-cycle and has sign $(-1)^{N-1}$, contributing
      \[
        (-1)^{N-1}\,\prod_{i=1}^N\!\Bigl(-\varepsilon_i\,A_i\,e^{-\mu\tau_i}\Bigr)
        = (-1)^{N-1}\,(-1)^N\,\Bigl(\prod_i\varepsilon_i\Bigr)\,
                            \Lambda\,e^{-\mu\tau}.
      \]
\end{enumerate}
Using $(-1)^{2N-1}=-1$ and $\prod_i\varepsilon_i=-1$ (negative-feedback
hypothesis), the cyclic-shift contribution is
$(-1)\cdot(-1)\cdot\Lambda\,e^{-\mu\tau}=+\Lambda\,e^{-\mu\tau}$.
Summing the two contributions
yields~\eqref{eq:n_gene_char_eq}.
\end{proof}

\begin{theorem}[Hopf condition for the cyclic $N$-gene logistic loop]
\label{thm:n_gene_hopf}
The characteristic equation~\eqref{eq:n_gene_char_eq} admits a pair of
purely imaginary roots $\mu=\pm i\omega_c$, $\omega_c>0$, if and only
if the loop-gain criterion holds:
\begin{equation}
\boxed{\;\Lambda \,>\, \prod_{i=1}^N \gamma_i\;}
\label{eq:n_gene_hopf_crit}
\end{equation}
In that case $\omega_c$ is the unique positive root of the polynomial
in $\omega^2$:
\begin{equation}
\prod_{i=1}^N (\omega^2 + \gamma_i^2) \;=\; \Lambda^2,
\label{eq:n_gene_omega_c}
\end{equation}
and the smallest positive critical delay is
\begin{equation}
\tau_c \,=\, \frac{1}{\omega_c}\,
       \Bigl[\,(2k_*-1)\pi
              \,-\, \sum_{i=1}^N \arctan\!\bigl(\omega_c/\gamma_i\bigr)\,\Bigr],
\label{eq:n_gene_tau_c}
\end{equation}
where $k_*\in\mathbb{N}$ is the smallest integer making the right-hand
side strictly positive.
\end{theorem}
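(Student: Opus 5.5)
The plan is to mirror the two-gene argument of Lemma~\ref{lem:omega} and equation~\eqref{eq:tau_c}. Substitute $\mu=i\omega$ with $\omega>0$ into~\eqref{eq:n_gene_char_eq} to get
\[
\prod_{i=1}^N(i\omega+\gamma_i)=-\Lambda e^{-i\omega\tau}.
\]
Taking moduli eliminates $\tau$ and gives the modulus condition~\eqref{eq:n_gene_omega_c}, since $|i\omega+\gamma_i|^2=\omega^2+\gamma_i^2$; this is the $N$-fold analogue of the factorisation~\eqref{eq:LHS_factorisation}. Conversely, if $\omega>0$ satisfies~\eqref{eq:n_gene_omega_c}, then $-\prod_i(i\omega+\gamma_i)/\Lambda$ lies on the unit circle, so some real $\tau$ with $e^{-i\omega\tau}$ equal to it exists. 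The existence of purely imaginary roots for some $\tau\ge 0$ is therefore equivalent to~\eqref{eq:n_gene_omega_c} having a positive root.

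For the equivalence with~\eqref{eq:n_gene_hopf_crit}, I set $p=\omega^2$ and consider $G(p)=\prod_{i=1}^N(p+\gamma_i^2)-\Lambda^2$ on $[0,\infty)$. Each factor is positive and strictly increasing there, so $G$ is strictly increasing, with $G(0)=\prod_i\gamma_i^2-\Lambda^2$ and $G(p)\to\infty$. Hence $G$ has a positive root iff $G(0)<0$, i.e.\ iff $\Lambda>\prod_i\gamma_i$ (both sides are positive), and that root is unique. This replaces the product-of-roots argument of Lemma~\ref{lem:omega}, which does not generalise directly beyond degree two; monotonicity is the clean substitute. If $\Lambda\le\prod_i\gamma_i$, then $G>0$ on $(0,\infty)$ and no imaginary root exists.

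For $\tau_c$, I match arguments. Since $\gamma_i>0$, we have $\arg(i\omega_c+\gamma_i)=\arctan(\omega_c/\gamma_i)\in(0,\pi/2)$, so the left side has argument $\Phi:=\sum_i\arctan(\omega_c/\gamma_i)$. The right side is $\Lambda e^{i(\pi-\omega_c\tau)}$, with argument $\pi-\omega_c\tau$ modulo $2\pi$. Equating arguments gives
\[
\omega_c\tau=\pi-\Phi+2\pi m=(2k-1)\pi-\Phi\quad\text{for } k\in\mathbb{Z},
\]
with $\tau$ real. The admissible delays are exactly these values, and positivity selects $k\ge k_*$, the smallest integer making the right-hand side positive. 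This yields~\eqref{eq:n_gene_tau_c}.

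I would check the $N=2$ consistency with~\eqref{eq:tau_c} as a sanity test: there $\Phi\in(0,\pi)$, so $k_*=1$ and $\omega_c\tau_c=\pi-\Phi$. This agrees with the atan2 formula, because $\tan(\pi-\Phi)=\omega_c(\gamma_1+\gamma_2)/(\omega_c^2-\gamma_1\gamma_2)$ and $\sin>0$.

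The main obstacle is bookkeeping rather than depth. One point is that the statement's phrase ``if and only if'' must be read as ``for some $\tau\ge0$''. The other is the branch choice in the argument-matching step: the right-hand side may vanish for a given $k$, which gives $\tau=0$. That case cannot actually occur when $\Lambda>\prod\gamma_i$ and the root would be nonzero at $\tau=0$ unless $\Phi$ is an odd multiple of $\pi$. So the strict-positivity definition of $k_*$ handles it, and I would note this explicitly.
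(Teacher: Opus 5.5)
Your proposal is correct and follows the paper's proof essentially step for step. Moduli plus strict monotonicity of $\prod_i(\omega^2+\gamma_i^2)$ in $\omega^2$ give the criterion $\Lambda>\prod_i\gamma_i$ and the uniqueness of $\omega_c$. Argument matching then gives $\omega_c\tau=(2k-1)\pi-\sum_i\arctan(\omega_c/\gamma_i)$. Your explicit converse (a real $\tau$ exists once the modulus condition holds) is slightly tidier than the paper's presentation.

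One small correction concerns your closing remark, which is circular as written. The bracket vanishes exactly when $\sum_i\arctan(\omega_c/\gamma_i)$ is an odd multiple of $\pi$, and this genuinely occurs for $N\ge 3$. For example, $N=3$, $\gamma_i\equiv\gamma$, $\Lambda=8\gamma^3$ gives $\omega_c=\sqrt{3}\,\gamma$ and a root $\pm i\omega_c$ already at $\tau=0$. The strict-positivity definition of $k_*$ skips this case, so the proof is unaffected.
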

\begin{proof}
Setting $\mu=i\omega$ in~\eqref{eq:n_gene_char_eq} and taking complex
moduli:
\[
\prod_{i=1}^N \sqrt{\omega^2+\gamma_i^2}
\,=\, \Lambda\,\bigl|e^{-i\omega\tau}\bigr|
\,=\, \Lambda,
\]
which is~\eqref{eq:n_gene_omega_c}. The left-hand side is strictly
increasing in $\omega^2\geq 0$, equals $\prod_i\gamma_i$ at $\omega=0$,
and tends to $\infty$ as $\omega\to\infty$, so a positive solution
$\omega_c^2$ exists if and only if
$\prod_i\gamma_i<\Lambda$, with uniqueness from monotonicity. Given
$\omega_c$, the phase condition reads
\begin{align*}
\arg(e^{-i\omega_c\tau_c})
&\,=\, \arg\!\Bigl(-{\textstyle\prod_i}(i\omega_c+\gamma_i)/\Lambda\Bigr)\\
&\,=\, \pi + \sum_i \arg(i\omega_c+\gamma_i)\\
&\,=\, \pi + \sum_i \arctan(\omega_c/\gamma_i),
\end{align*}
giving $-\omega_c\tau_c\equiv \pi+\sum_i\arctan(\omega_c/\gamma_i)
\pmod{2\pi}$, equivalently
$\omega_c\tau_c=(2k-1)\pi-\sum_i\arctan(\omega_c/\gamma_i)$ for some
integer $k$. Since each $\arctan(\omega_c/\gamma_i)\in(0,\pi/2)$, the
sum lies in $(0,N\pi/2)$, and the smallest positive value of $\tau_c$
is obtained at the smallest $k=k_*$ for which the bracket
in~\eqref{eq:n_gene_tau_c} is positive.
\end{proof}

For $N=2$, $\sum_i\arctan(\omega_c/\gamma_i)<\pi$ always (since each
summand is less than $\pi/2$), so $k_*=1$
and~\eqref{eq:n_gene_tau_c} reproduces the formula~\eqref{eq:tau_c}
of the present paper after the trigonometric identity converting the
phase
\[
\pi-\arctan(\omega_c/\gamma_1)-\arctan(\omega_c/\gamma_2)
\]
into the four-quadrant form
\[
\operatorname{atan2}\bigl(\omega_c(\gamma_1+\gamma_2),\,
\omega_c^2-\gamma_1\gamma_2\bigr)
\]
(Section~\ref{sec:hopf}). For $N\geq 3$ the sum can exceed $\pi$, in
which case $k_*\geq 2$ and the critical delay corresponds to the
second-quadrant branch of the phase condition.

Theorem~\ref{thm:n_gene_hopf} locates the imaginary-axis crossings but,
exactly as in the two-gene case, does not by itself guarantee that the
eigenvalues genuinely \emph{cross} the axis. We now close that gap for
arbitrary $N$ with a closed-form transversality rate, so that the
cyclic-loop Hopf bifurcation rests on a complete proof rather than on
the assertion that the two-gene calculation ``carries over''. The
argument is shorter and more transparent than the direct expansion used
in Theorem~\ref{thm:transversality}: it exploits the logarithmic
derivative of $\prod_i(\mu+\gamma_i)$ and the elementary fact that
$\mathrm{Re}(w)$ and $\mathrm{Re}(w^{-1})$ have the same sign.

\begin{theorem}[Transversality for the cyclic $N$-gene loop]
\label{thm:n_gene_transversality}
Assume the loop-gain criterion $\Lambda>\prod_{i=1}^N\gamma_i$, so that
by Theorem~\ref{thm:n_gene_hopf} the characteristic
equation~\eqref{eq:n_gene_char_eq} has purely imaginary roots
$\pm i\omega_c$ at the discrete delays
$\tau_c^{(k)}=\tau_c+2\pi k/\omega_c$, $k\in\mathbb{N}_0$. Define the
positive sums
\begin{equation}
S_1:=\sum_{j=1}^N\frac{1}{\omega_c^{2}+\gamma_j^{2}},
\qquad
S_2:=\sum_{j=1}^N\frac{\gamma_j}{\omega_c^{2}+\gamma_j^{2}}.
\label{eq:n_gene_S1S2}
\end{equation}
At every critical delay $\tau_c^{(k)}$ the root $i\omega_c$ is simple
and crosses the imaginary axis transversally, with closed-form rate
\begin{equation}
\begin{aligned}
\mathrm{Re}\!\left(\frac{d\mu}{d\tau}\right)\!\bigg|_{\tau_c^{(k)}}
&=\frac{S_1}{S_1^{2}+\omega_c^{-2}\bigl(S_2+\tau_c^{(k)}\bigr)^{2}}
\;>\;0,\\[4pt]
\mathrm{Im}\!\left(\frac{d\mu}{d\tau}\right)\!\bigg|_{\tau_c^{(k)}}
&=-\,\frac{\omega_c^{-1}\bigl(S_2+\tau_c^{(k)}\bigr)}
        {S_1^{2}+\omega_c^{-2}\bigl(S_2+\tau_c^{(k)}\bigr)^{2}}.
\end{aligned}
\label{eq:n_gene_transversality}
\end{equation}
The rate is strictly positive at every crossing and, since
$S_1,S_2,\omega_c$ are $k$-independent while $\tau_c^{(k)}$ increases
with $k$, its magnitude decreases monotonically in $k$: the primary
crossing at $\tau_c$ is the most transversal. Consequently, whenever
the no-delay system is asymptotically stable --- automatic for $N=2$,
and characterised by a Routh--Hurwitz condition for $N\geq3$
(Remark~\ref{rem:necessary_sufficient}) --- the
equilibrium of~\eqref{eq:n_gene_cyclic} undergoes a Hopf bifurcation as
$\tau$ crosses $\tau_c$.
\end{theorem}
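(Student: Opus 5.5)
The proof is a logarithmic-derivative computation, as the paragraph before the statement announces. Write $P(\mu):=\prod_{i=1}^N(\mu+\gamma_i)$, so the characteristic function of Theorem~\ref{thm:n_gene_char_eq} is $\Delta(\mu;\tau)=P(\mu)+\Lambda e^{-\mu\tau}$.

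Differentiate $\Delta(\mu(\tau);\tau)=0$ implicitly in $\tau$. This gives
\[
\bigl[P'(\mu)-\tau\Lambda e^{-\mu\tau}\bigr]\mu'-\mu\Lambda e^{-\mu\tau}=0 .
\]
On the root we may replace $\Lambda e^{-\mu\tau}$ by $-P(\mu)$ and divide by $P(\mu)\neq0$; note $P(i\omega_c)\neq0$ because $\gamma_i>0$. The result is
\[
\Bigl[\tfrac{P'}{P}(\mu)+\tau\Bigr]\mu'=-\mu .
\]
This yields the compact formula
\[
\Bigl(\frac{d\mu}{d\tau}\Bigr)^{-1}=-\frac{1}{\mu}\Bigl[\sum_{j}\frac{1}{\mu+\gamma_j}+\tau\Bigr].
\]
Simplicity of the root is the statement that the bracket is nonzero: $\partial_\mu\Delta=P'-\tau\Lambda e^{-\mu\tau}=P\,(P'/P+\tau)$ at the root. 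We will see below that the bracket is nonzero because its imaginary part is nonzero. Simplicity also justifies the implicit-function differentiation.

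Next evaluate at $\mu=i\omega_c$ and $\tau=\tau_c^{(k)}$. Use $1/(i\omega_c+\gamma_j)=(\gamma_j-i\omega_c)/(\omega_c^2+\gamma_j^2)$. Then
\[
\sum_j\frac{1}{i\omega_c+\gamma_j}+\tau_c^{(k)}=\bigl(S_2+\tau_c^{(k)}\bigr)-i\omega_cS_1 .
\]
Its imaginary part $-\omega_cS_1$ is strictly negative, which gives simplicity. Multiplying by $-1/(i\omega_c)=i/\omega_c$ gives
\[
w:=\Bigl(\frac{d\mu}{d\tau}\Bigr)^{-1}=S_1+i\,\omega_c^{-1}\bigl(S_2+\tau_c^{(k)}\bigr).
\]
Then $d\mu/d\tau=\bar w/|w|^2$, whose real and imaginary parts are exactly~\eqref{eq:n_gene_transversality}. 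Positivity follows from $S_1>0$. Monotone decrease in $k$ holds because $S_1,S_2,\omega_c$ depend only on $\omega_c$ (Theorem~\ref{thm:n_gene_hopf}), while the denominator increases strictly with $\tau_c^{(k)}>0$ and $S_2>0$. As a sanity check, $N=2$ should reproduce~\eqref{eq:transversality} via the factorisation~\eqref{eq:LHS_factorisation}; this check is optional.

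For the Hopf conclusion, the key input is that the no-delay equilibrium is asymptotically stable. This is automatic for $N=2$ by the Routh--Hurwitz argument preceding Lemma~\ref{lem:stable_below_tauc}, and it is assumed for $N\geq3$. Given that, the continuity argument of Lemma~\ref{lem:stable_below_tauc} applies verbatim to~\eqref{eq:n_gene_char_eq}. The only imaginary-axis roots are $\pm i\omega_c$ at the $\tau_c^{(k)}$, and $\mu=0$ is never a root since $P(0)+\Lambda>0$. Hence all roots lie in the open left half-plane for $\tau<\tau_c$.

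At $\tau_c$ the root pair $\pm i\omega_c$ is simple, and no other roots lie on the axis, because $\omega_c$ is the unique positive solution of~\eqref{eq:n_gene_omega_c}. The crossing is transversal. The Hopf theorem for retarded FDEs \cite[Theorem~11.1.1]{hale1993introduction} then applies, exactly as in Theorem~\ref{thm:hopf_main}.

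The main obstacle I expect is bookkeeping rather than substance. One point is that $\tau_c$ for $N\geq3$ may use $k_*\geq2$, which does not affect the formula since only $\tau_c^{(k)}>0$ enters. The other is making sure the sign of the cyclic term, already fixed in Theorem~\ref{thm:n_gene_char_eq}, is used consistently when substituting $\Lambda e^{-\mu\tau}=-P$.
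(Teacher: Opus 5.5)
Your proposal is correct and follows the paper's proof essentially step for step: implicit differentiation, the substitution $\Lambda e^{-\mu\tau}=-P(\mu)$ and the logarithmic derivative give $(d\mu/d\tau)^{-1}=S_1+i\,\omega_c^{-1}(S_2+\tau)$ at $\mu=i\omega_c$, and inverting this yields the closed forms, positivity and the decay in $k$. The differences are cosmetic. You read simplicity off the imaginary part $-\omega_c S_1$ of the bracket $P'/P+\tau$, whereas the paper uses the real part $S_1$ of that bracket's multiple by $-1/\mu$. You also spell out the stability below $\tau_c$ and the exclusion of the zero root, which the paper's proof leaves implicit.
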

\begin{proof}
Write $P(\mu)=\prod_{j=1}^N(\mu+\gamma_j)$, so that the characteristic
function is $\Delta(\mu,\tau)=P(\mu)+\Lambda e^{-\mu\tau}$. Implicit
differentiation along a root branch $\mu(\tau)$ gives
$d\mu/d\tau=-\Delta_\tau/\Delta_\mu$ with
$\Delta_\tau=-\Lambda\mu e^{-\mu\tau}$ and
$\Delta_\mu=P'(\mu)-\Lambda\tau e^{-\mu\tau}$, hence
\[
\left(\frac{d\mu}{d\tau}\right)^{-1}
=\frac{P'(\mu)-\Lambda\tau e^{-\mu\tau}}{\Lambda\mu e^{-\mu\tau}}
=\frac{P'(\mu)}{\Lambda\mu e^{-\mu\tau}}-\frac{\tau}{\mu}.
\]
Substituting $\Lambda e^{-\mu\tau}=-P(\mu)$ from~\eqref{eq:n_gene_char_eq}
and using the logarithmic derivative
$P'(\mu)/P(\mu)=\sum_j(\mu+\gamma_j)^{-1}$,
\[
\left(\frac{d\mu}{d\tau}\right)^{-1}
=-\frac{1}{\mu}\sum_{j=1}^N\frac{1}{\mu+\gamma_j}-\frac{\tau}{\mu}.
\]
Evaluate at $\mu=i\omega_c$. The term $-\tau/(i\omega_c)=i\tau/\omega_c$
is purely imaginary, and
\[
\sum_{j=1}^N\frac{1}{i\omega_c+\gamma_j}
=\sum_{j=1}^N\frac{\gamma_j-i\omega_c}{\omega_c^{2}+\gamma_j^{2}}
=S_2-i\omega_c S_1,
\]
so that $-(i\omega_c)^{-1}(S_2-i\omega_c S_1)=S_1+iS_2/\omega_c$ and
\begin{equation}
\left(\frac{d\mu}{d\tau}\right)^{-1}\!\bigg|_{\mu=i\omega_c}
=S_1+i\,\frac{S_2+\tau}{\omega_c}.
\label{eq:n_gene_inv_dmu}
\end{equation}
The real part equals $S_1=\sum_j(\omega_c^{2}+\gamma_j^{2})^{-1}>0$,
which is strictly positive and \emph{independent of $\tau$}; in
particular $\Delta_\mu(i\omega_c,\tau)\neq0$, so $i\omega_c$ is a simple
root. Because $\mathrm{Re}(w)$ and $\mathrm{Re}(w^{-1})$ share the same
sign for every nonzero $w\in\mathbb{C}$, we conclude
$\mathrm{Re}(d\mu/d\tau)|_{i\omega_c}>0$. Inverting the complex
number~\eqref{eq:n_gene_inv_dmu} and setting $\tau=\tau_c^{(k)}$ yields
the displayed closed forms in~\eqref{eq:n_gene_transversality}; the
monotone decay in $k$ follows since only $\tau_c^{(k)}$ depends on $k$.
\end{proof}

\begin{remark}[Robustness for general $N$]
\label{rem:n_gene_robustness}
The transversality rate~\eqref{eq:n_gene_transversality} is a
continuous, strictly positive function of the parameters throughout the
loop-gain region $\Lambda>\prod_i\gamma_i$, so on any compact subset of
that region it is bounded below by a positive constant: the cyclic
$N$-gene Hopf bifurcation is quantitatively robust, exactly as in the
two-gene case (Proposition~\ref{prop:transversality_bound}). A crude
but explicit bound follows from the elementary estimate
$\gamma_j/(\omega_c^{2}+\gamma_j^{2})\leq 1/(2\omega_c)$, which gives
$S_2\leq N/(2\omega_c)$ and hence
\[
\mathrm{Re}\!\left(\frac{d\mu}{d\tau}\right)\!\bigg|_{\tau_c}
\;\geq\;
\frac{S_1}{S_1^{2}+\omega_c^{-2}\bigl(N/(2\omega_c)+\tau_c\bigr)^{2}}
\;>\;0.
\]
The Hopf frequency itself obeys an \emph{a priori} bracket that makes
this bound explicit in the raw parameters: since
$\prod_j(\omega_c^{2}+\gamma_j^{2})=\Lambda^{2}$ lies between
$(\omega_c^{2}+\min_j\gamma_j^{2})^{N}$ and
$(\omega_c^{2}+\max_j\gamma_j^{2})^{N}$, one has
\begin{equation}
\Lambda^{2/N}-\max_j\gamma_j^{2}
\;\leq\;\omega_c^{2}\;\leq\;
\Lambda^{2/N}-\min_j\gamma_j^{2},
\label{eq:omega_bracket}
\end{equation}
which collapses to an equality in the symmetric case
$\gamma_j\equiv\gamma$, giving $\omega_c=\sqrt{\Lambda^{2/N}-\gamma^{2}}$.
For $N=2$ the formula~\eqref{eq:n_gene_transversality} reduces, after
using $(\omega_c^{2}+\gamma_1^{2})(\omega_c^{2}+\gamma_2^{2})=(AB)^{2}$
at the Hopf point, to the expression~\eqref{eq:transversality} of
Theorem~\ref{thm:transversality}; the two derivations therefore agree,
and the present one supplies the shorter route.
\end{remark}

\begin{remark}[Necessary vs.\ sufficient]
\label{rem:necessary_sufficient}
For $N=2$ the loop-gain criterion $\Lambda=AB>\gamma_1\gamma_2$ is both
necessary and sufficient for the equilibrium to undergo a delay-induced
Hopf bifurcation as $\tau$ varies, because the no-delay system is
unconditionally stable: the linearisation
$J(0)=-\mathrm{diag}(\gamma_1,\gamma_2)+\mathrm{offdiag}$ has trace
$-(\gamma_1+\gamma_2)<0$ and determinant $\gamma_1\gamma_2+AB>0$, so
both eigenvalues lie in the open left half-plane regardless of the
value of $AB$.

For $N\geq 3$ the loop-gain criterion remains \emph{necessary} for
imaginary-axis crossings, but is no longer sufficient by itself: one
must also verify that the no-delay system $\dot{\mathbf{u}}=J(0)\mathbf{u}$
is asymptotically stable. The no-delay characteristic polynomial of the
cyclic $N$-gene network is
\[
\prod_{i=1}^N (\mu+\gamma_i) + \Lambda \;=\;
   \mu^N \,+\, s_1\mu^{N-1} \,+\, s_2\mu^{N-2} \,+\, \cdots
   \,+\, s_{N-1}\mu \,+\, (s_N+\Lambda),
\]
where $s_k$ is the elementary symmetric polynomial of degree $k$ in
$(\gamma_1,\ldots,\gamma_N)$. For $N=3$ the Routh--Hurwitz condition
for asymptotic stability of the no-delay system is
\[
\bigl(\gamma_1+\gamma_2+\gamma_3\bigr)\bigl(\gamma_1\gamma_2
+\gamma_2\gamma_3+\gamma_1\gamma_3\bigr)
\,>\, \gamma_1\gamma_2\gamma_3+\Lambda.
\]
Together with $\Lambda>\gamma_1\gamma_2\gamma_3$, this opens a
non-empty parameter window in which the no-delay system is stable but
delay-induced Hopf occurs. Outside this window
(when the no-delay Routh--Hurwitz condition fails) the equilibrium is
already unstable at $\tau=0$, oscillations are produced by the
classical Goodwin
mechanism~\cite{hastings1977existence,smith2011introduction}, and the
delay merely modifies the periodic-orbit shape and period rather than
inducing the bifurcation.
\end{remark}

The Routh--Hurwitz analysis above is parameter-by-parameter for each
$N$. In the symmetric case $\gamma_1=\cdots=\gamma_N$ it collapses to a
single closed-form inequality, valid for every $N$, which makes the
delay-induced-Hopf window completely explicit.

\begin{proposition}[Delay-induced Hopf window for the symmetric cyclic loop]
\label{prop:symmetric_window}
Consider the cyclic $N$-gene loop~\eqref{eq:n_gene_cyclic} with a net
negative loop sign and equal degradation rates
$\gamma_1=\cdots=\gamma_N=\gamma>0$. The no-delay system is
asymptotically stable if and only if
$\Lambda<\gamma^{N}\sec^{N}(\pi/N)$. Combined with the loop-gain
criterion $\Lambda>\gamma^{N}$ of Theorem~\ref{thm:n_gene_hopf}, the
delay genuinely \emph{induces} a Hopf bifurcation --- rather than merely
reshaping an already-unstable Goodwin oscillator --- precisely when
\begin{equation}
\gamma^{N}\;<\;\Lambda\;<\;\gamma^{N}\,\sec^{N}\!\Bigl(\tfrac{\pi}{N}\Bigr).
\label{eq:symmetric_window}
\end{equation}
This window is non-empty for every $N\geq2$: for $N=2$ it is the entire
half-line $\Lambda>\gamma^{2}$ (recovering the unconditional two-gene
result of Remark~\ref{rem:necessary_sufficient}, since
$\sec(\pi/2)=+\infty$), while for $N\geq3$ it is the bounded interval
of width $\gamma^{N}\bigl(\sec^{N}(\pi/N)-1\bigr)$.
\end{proposition}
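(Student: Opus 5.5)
With equal rates, the no-delay characteristic polynomial of Remark~\ref{rem:necessary_sufficient} collapses to $(\mu+\gamma)^N+\Lambda=0$. Its roots can be written down explicitly, so I would read off stability directly and not use Routh--Hurwitz at all. Setting $w=\mu+\gamma$, the roots satisfy $w^N=-\Lambda=\Lambda e^{i\pi}$. Hence
\[
\mu_k=-\gamma+\Lambda^{1/N}e^{i(2k+1)\pi/N},\qquad k=0,\dots,N-1,
\]
with real parts $-\gamma+\Lambda^{1/N}\cos\bigl((2k+1)\pi/N\bigr)$.

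The largest real part comes from the largest cosine. Over odd multiples of $\pi/N$ this is attained at $k=0$ (and at $k=N-1$ by conjugacy), giving $\cos(\pi/N)$. So the no-delay system is asymptotically stable iff $\Lambda^{1/N}\cos(\pi/N)<\gamma$.

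I would then split into cases. For $N\ge3$ we have $\cos(\pi/N)\ge 1/2>0$, so the condition is equivalent to $\Lambda<\gamma^N\sec^N(\pi/N)$. For $N=2$, $\cos(\pi/2)=0$ and every root has real part $-\gamma<0$, which matches the convention $\sec(\pi/2)=+\infty$ and recovers Remark~\ref{rem:necessary_sufficient}. A little care is needed in the direction of the equivalence: $\Lambda>0$ and $t\mapsto t^N$ is monotone on $(0,\infty)$, so taking $N$-th roots preserves the inequality.

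Next, I would intersect this with the loop-gain criterion of Theorem~\ref{thm:n_gene_hopf}, which in the symmetric case reads $\Lambda>\gamma^N$. This gives the window~\eqref{eq:symmetric_window}. The claim that it is exactly the delay-induced regime rests on two facts. Inside the window, stability at $\tau=0$ together with the transversal crossing of Theorem~\ref{thm:n_gene_transversality} yields the bifurcation, by the same continuity argument as in Lemma~\ref{lem:stable_below_tauc}. Outside it, either there are no imaginary-axis roots for any $\tau$, or the equilibrium is already unstable at $\tau=0$.

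Non-emptiness and the width are immediate. For $N\ge3$, $0<\cos(\pi/N)<1$, so $\sec^N(\pi/N)>1$, and the interval has width $\gamma^N(\sec^N(\pi/N)-1)$.

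The only delicate point is justifying that the extremal cosine over the odd angles $(2k+1)\pi/N$ is $\cos(\pi/N)$. I would argue this by noting that $\cos$ is decreasing on $[0,\pi]$ and symmetric about $\pi$. The angles nearest $0$ modulo $2\pi$ are $\pm\pi/N$, because $(2k+1)\pi/N\in[\pi/N,\,2\pi-\pi/N]$ for $0\le k\le N-1$.
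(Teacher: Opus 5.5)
Your proposal is correct and follows essentially the same route as the paper. Both reduce the no-delay polynomial to $(\mu+\gamma)^N+\Lambda$, read off the explicit roots $-\gamma+\Lambda^{1/N}e^{i(2k+1)\pi/N}$, identify $\cos(\pi/N)$ as the extremal cosine, and intersect the resulting stability condition with the loop-gain criterion $\Lambda>\gamma^N$. Two of your additions make the paper's compressed argument more careful: you treat $N=2$ separately rather than dividing by $\cos(\pi/2)=0$, and you invoke transversality plus the continuity argument for the ``precisely when'' clause.
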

\begin{proof}
With $\gamma_j\equiv\gamma$ the no-delay characteristic polynomial
$\prod_j(\mu+\gamma_j)+\Lambda$ (Theorem~\ref{thm:n_gene_char_eq} at
$\tau=0$) becomes $(\mu+\gamma)^{N}+\Lambda$, whose roots are
\[
\mu_k=-\gamma+\Lambda^{1/N}\,e^{i\pi(2k+1)/N},
\qquad k=0,1,\ldots,N-1,
\]
with real parts
$-\gamma+\Lambda^{1/N}\cos\bigl(\pi(2k+1)/N\bigr)$. The largest real
part is attained at the angle closest to $0$, namely $\pi/N$ (taken at
$k=0$, and equally at $k=N-1$), so
$\max_k\mathrm{Re}\,\mu_k=-\gamma+\Lambda^{1/N}\cos(\pi/N)$. The
no-delay system is asymptotically stable iff this maximum is negative,
i.e.\ $\Lambda^{1/N}\cos(\pi/N)<\gamma$, equivalently
$\Lambda<\gamma^{N}\sec^{N}(\pi/N)$. The loop-gain criterion
$\Lambda>\prod_j\gamma_j=\gamma^{N}$ is necessary for purely imaginary
roots by Theorem~\ref{thm:n_gene_hopf}; intersecting the two conditions
gives~\eqref{eq:symmetric_window}. Since $0<\pi/N\leq\pi/2$ for every
$N\geq2$, one has $\sec(\pi/N)>1$ (with $\sec(\pi/2)=+\infty$), so the
window~\eqref{eq:symmetric_window} is non-empty.
\end{proof}

For $N=3$ the window~\eqref{eq:symmetric_window} reads
$\gamma^{3}<\Lambda<\gamma^{3}\sec^{3}(\pi/3)=8\gamma^{3}$, recovering
the symmetric specialisation of the Routh--Hurwitz condition in
Remark~\ref{rem:necessary_sufficient}; for $N=4$ it is
$\gamma^{4}<\Lambda<4\gamma^{4}$, and the window narrows as
$N\to\infty$ since $\sec^{N}(\pi/N)\to e^{\pi^{2}/(2N)}\to1$. The
worked example below is the smallest non-trivial instance.

\paragraph{Worked example: symmetric three-gene cyclic loop.}
Consider the cyclic logistic three-gene network with one repressor and
two activators (or any composition giving a net negative loop sign), all
sharing common parameters $\gamma_i=0.5$, $\kappa_i=2$, $\theta_i=2$,
$\lambda=1.5$. The symmetry yields the equilibrium
$\mathbf{x}^*=(2,2,2)$ with $f^\pm(x_i^*)=1/2$, so each link has gain
$A_i=\lambda\kappa_i\,f^\pm(1-f^\pm)=0.75$ and the loop gain is
$\Lambda=A_1A_2A_3=0.75^3\approx 0.422$. The loop-gain lower bound is
$\prod_i\gamma_i=0.125$, and by Proposition~\ref{prop:symmetric_window}
the delay-induced-Hopf window for this symmetric loop is
$\gamma^3<\Lambda<8\gamma^3$, i.e.\ $0.125<\Lambda<1$. This window is
non-empty and contains $\Lambda\approx 0.422$, so the no-delay system is
asymptotically stable and the delay genuinely \emph{induces} the Hopf
bifurcation (rather than merely modifying an already-unstable Goodwin
oscillator).
The Hopf-frequency equation
$\prod_{i=1}^3(\omega^2+\gamma^2)=\Lambda^2$ collapses, in the
symmetric case, to $\omega_c=\sqrt{\Lambda^{2/3}-\gamma^2}=\sqrt{0.5625-0.25}\approx 0.5590$.
The phase sum is $3\arctan(\omega_c/\gamma)\approx 2.5232\,\mathrm{rad}$,
which is below $\pi$, so the smallest positive critical delay is given by
$k_*=1$ in~\eqref{eq:n_gene_tau_c}:
\begin{align*}
\tau_c &\;=\; \frac{\pi - 3\arctan(\omega_c/\gamma)}{\omega_c}
\;\approx\; \frac{3.1416-2.5232}{0.5590}
\;\approx\; 1.1062,\\
T_c &\;=\; \frac{2\pi}{\omega_c}\;\approx\; 11.24.
\end{align*}
Direct substitution of $\mu=i\omega_c$ and $\tau=\tau_c$
into~\eqref{eq:n_gene_char_eq} yields a residual of order $10^{-16}$,
confirming the analytical formulas. As $\tau$ crosses $\tau_c$ from
below, the leading complex-conjugate eigenvalue pair migrates into the
right half-plane: Theorem~\ref{thm:n_gene_transversality} gives, with
$S_1=3/(\omega_c^2+\gamma^2)\approx 5.333$ and
$S_2=3\gamma/(\omega_c^2+\gamma^2)\approx 2.667$, the transversality
rate
\[
\mathrm{Re}\!\left(\frac{d\mu}{d\tau}\right)\!\bigg|_{\tau_c}
=\frac{S_1}{S_1^{2}+\omega_c^{-2}(S_2+\tau_c)^{2}}
\;\approx\; 0.0721\;>\;0,
\]
so a Hopf bifurcation occurs. This worked example illustrates the
analytical content of Theorems~\ref{thm:n_gene_hopf}
and~\ref{thm:n_gene_transversality} on the smallest non-trivial cyclic
case beyond $N=2$.

\begin{remark}[Structural transparency]
The closed form~\eqref{eq:n_gene_char_eq} is striking: although the
underlying nonlinear system has $N$ delays, $N$ thresholds, $N$
production rates, and $N$ degradation rates ($4N$ parameters), the
characteristic equation collapses to a binomial in $\mu$ with only two
``effective'' parameters --- the total loop delay $\tau$ and the loop
gain $\Lambda$. The reduction is a direct consequence of the cyclic
topology and the chain-rule structure of the linearisation; it is
\emph{not} specific to the logistic regulatory function. What
\emph{is} specific to the logistic, and what enables the closed-form
analysis throughout this paper, is the elementary algebraic form of
$A_i = \kappa_i\lambda f^+(x_{i-1}^*)f^-(x_{i-1}^*)$, with no fractional
powers, no transcendental thresholds, and no implicit definitions.
\end{remark}

\subsection{Multi-loop De~Morgan networks}

The cyclic case~\eqref{eq:n_gene_cyclic} covers single-loop topologies.
The companion paper introduces a product-of-logistics De~Morgan
formalism for arbitrary Boolean-derived gene regulatory networks, in
which the regulatory function $\Phi_i(\mathbf{x})$ for gene~$i$ is a
finite product and complement of single-input logistic
factors~\cite{belgacem2026logistic}. Adding a delay $\tau_{ij}$ on each
regulatory link $j\to i$ gives the delay-differential system
\[
\dot{x}_i(t) \,=\, \kappa_i\,
   \Phi_i\bigl(x_{j_1}(t-\tau_{i,j_1}),\ldots,
                x_{j_{m_i}}(t-\tau_{i,j_{m_i}})\bigr)
   \,-\, \gamma_i\,x_i(t),
\quad i=1,\ldots,n.
\]
The Jacobian $J_{ij} = \partial\Phi_i/\partial x_j$ at any equilibrium
is a finite sum of products of logistic factors and their derivatives,
hence a closed-form rational function of the parameters
$(\lambda,\kappa_i,\theta_{ij})$. The characteristic equation of the
linearised DDE is the transcendental quasi-polynomial
\begin{equation}
\det\!\Bigl(\mu I + \mathrm{diag}(\gamma_i)
      - \bigl[\kappa_i J_{ij}\,e^{-\mu\tau_{ij}}\bigr]_{i,j}\Bigr)
\,=\, 0.
\label{eq:multigene_char}
\end{equation}
Theorem~\ref{thm:n_gene_char_eq} shows that for a regulatory graph
consisting of a single directed cycle, this determinant collapses to
$\prod_i(\mu+\gamma_i)+\Lambda e^{-\mu\tau}$ with $\tau$ the total
loop delay. For a regulatory graph with multiple directed cycles
$\{C_1,C_2,\ldots\}$, expanding the
determinant~\eqref{eq:multigene_char} by the Leibniz formula produces
one term per cycle (and per disjoint union of cycles), each of the form
\[
\sigma_C\,\Bigl(\prod_{(p\to q)\in C}\kappa_p J_{p,q}\Bigr)\,
e^{-\mu\sum_{(p\to q)\in C}\tau_{pq}}\,
\prod_{i\notin C}(\mu+\gamma_i)
\;=\; \sigma_C\,\Lambda_C\,e^{-\mu\tau_C}\,Q_C(\mu),
\]
where $\sigma_C$ is the sign of the corresponding permutation and
$Q_C$ collects the diagonal contributions from genes outside the
cycle. The characteristic equation is thus a sum of one
quasi-polynomial per cycle (or disjoint cycle-union), with each term
controlled by its own loop delay $\tau_C$ and loop gain $\Lambda_C$.
The Beretta--Kuang geometric stability-switch
criterion~\cite{beretta2002geometric} provides the systematic
framework for handling several loops simultaneously. The cyclic case
treated above is the simplest single-loop instance; detailed analysis
of competing-loop topologies, including the negative-positive
feedback combinations characteristic of mammalian circadian
networks~\cite{smolen2002reduced} and the somite segmentation clock,
is left for future work, and the present paper together with
Theorem~\ref{thm:n_gene_char_eq} supplies the analytical scaffolding.

\subsection{Distributed delays}

Real biological delays are not deterministic but distributed, reflecting
heterogeneity in transcription rates, mRNA splicing kinetics, and
translation efficiencies. Replacing the constant delay $\tau_{ij}$ in
each link by a distributed delay
$\int_0^\infty K_{ij}(s)\,x_j(t-s)\,ds$ with normalised kernel $K_{ij}$
yields a system of integro-differential equations whose linearisation
admits a characteristic equation of the form
$P(\mu)+Q(\mu)\hat{K}(\mu)=0$, where $\hat{K}$ is the Laplace
transform of the kernel. For the canonical $\Gamma$-distributed
kernel $K_{ij}(s)=s^{p-1}e^{-\alpha s}\alpha^p/(p-1)!$, the characteristic
equation becomes a polynomial~\cite{macdonald1989biological}, and the
Hopf analysis reduces to standard root-locus methods. The corresponding
extension of the present paper is straightforward in principle: the
two-delta-distribution kernel $K_{ij}(s)=\delta(s-\tau_{ij})$ recovers
the constant-delay case treated here, and broadening the kernel
typically \emph{increases} the critical delay
$\tau_c$~\cite{beretta2002geometric,macdonald1989biological}, so the
constant-delay analysis provides a lower bound on the onset delay in
the corresponding distributed-delay model.

%%%%%%%%%%%%%%%%%%%%%%%%%%%%%%%%%%%%%%%%%%%%%%%%%%%%%%%%%%%%%%%%%%%%%%%%%%%%%%%%
\section{Conclusion}
\label{sec:conclusion}

The two-gene logistic oscillator developed in the companion
paper~\cite{belgacem2026logistic} is unconditionally stable in the absence
of delays, as a structural consequence of the strictly negative trace of
the linearised Jacobian. The present paper has shown that this stability
is broken in a controlled and analytically tractable manner by the
introduction of transcriptional delays $\tau_1,\tau_2\geq 0$ on the two
regulatory branches.

The principal results are: a closed-form characteristic equation
$(\mu+\gamma_1)(\mu+\gamma_2)+AB\,e^{-\mu\tau}=0$ depending only on the
total loop delay $\tau=\tau_1+\tau_2$; a sharp loop-gain criterion
$AB>\gamma_1\gamma_2$ for the existence of a Hopf bifurcation, with the
weak-feedback regime exhibiting absolute stability for all delays;
explicit closed-form expressions for the Hopf frequency $\omega_c$
(the unique positive root of an explicit quadratic in $\omega^2$) and
the critical delay $\tau_c$ as the four-quadrant inverse tangent of the
sine and cosine components determined by the imaginary-axis crossing
condition $(i\omega+\gamma_1)(i\omega+\gamma_2)+AB\,e^{-i\omega\tau}=0$; and a
clean proof of transversality giving
$\mathrm{Re}(d\mu/d\tau)|_{\tau_c}>0$, so the Hopf bifurcation is
generic, with a parameter-uniform positive lower bound that establishes
quantitative robustness throughout the strong-feedback regime
(Proposition~\ref{prop:transversality_bound}). The supercriticality of
the bifurcation is established by the $\sqrt{\tau-\tau_c}$
amplitude scaling combined with the closed-form transversality and the
closed-form eigenvector amplitude $|q_1|^{2}=B^{2}/(\omega_c^{2}+\gamma_1^{2})$
(Proposition~\ref{prop:eigvec_amplitude}), yielding
$\mathrm{Re}(c_1)\approx -1.27$ for the canonical parameters. This is
made fully analytical by a Lindstedt--Poincar\'e reduction: for the
symmetric-threshold loop supercriticality is \emph{proved}
unconditionally with explicit amplitude and frequency laws
(Theorem~\ref{thm:supercritical}), and for the general asymmetric loop
the reduction yields a closed-form first Lyapunov coefficient through a
single complex solvability equation, with $\mathcal{T}>0$ characterising
supercriticality (Theorem~\ref{thm:general_lyapunov}); a Monte-Carlo
sweep of the strong-feedback region returns $\mathcal{T}>0$ in every case. The
model is validated externally against the experimentally characterised
p53--Mdm2 oscillator: using only the measured protein half-life and
transcriptional delay, the closed-form Hopf period reproduces the
observed $\sim5.5$~h oscillation to within $3\%$, and the Hopf onset
agrees with the standard Hill-function model to within a few percent
(Section~\ref{sec:bio_validation}).

For the canonical parameter set used throughout the companion paper, the
analytical critical delay is $\tau_c\approx 0.134$, with Hopf frequency
$\omega_c\approx 2.353$ and onset period $T_c\approx 2.67$. Numerical
integration of the full nonlinear DDE confirms each of these values to
within the expected discretisation error. As $\tau$ grows the period
$T(\tau)$ grows monotonically with asymptotic slope
$dT/d\tau\to 2$, and we derive the closed-form deep-relaxation offset
$C_\infty=\gamma_1^{-1}\ln\!\bigl[(\kappa_1/\gamma_1)^2/
(\theta_1(\kappa_1/\gamma_1-\theta_1))\bigr]
+\gamma_2^{-1}\ln\!\bigl[(\kappa_2/\gamma_2)^2/
(\theta_2(\kappa_2/\gamma_2-\theta_2))\bigr]$
in equation~\eqref{eq:Cinf}, evaluating to $C_\infty\approx 8.92$ and
matching direct simulation at $\tau=10,20$ to within finite-$\lambda$
corrections. The asymptote is approached only slowly: on the displayed
range the local slope is closer to $3.7$ because the gene-relaxation
timescales $1/\gamma_1, 1/\gamma_2$ contribute a $\tau$-dependent
correction that dominates the offset until $\tau$ is well beyond the
values relevant for biological timescales.

The closed-form tractability of every step in the analysis depends on
three mathematical properties of the logistic function:
$C^\infty$ regularity at every point, including the origin; the
self-referential derivative
$(f^+)'=\lambda f^+(1-f^+)$; and the complement identity
$f^-=1-f^+$. None of these properties hold for the Hill function with
non-integer cooperativity exponent: the derivatives at the origin
diverge, the derivative formula involves fractional power-law factors
that resist symbolic manipulation, and the complement identity fails
in general. The same delay analysis applied to a Hill-based oscillator
would either require numerical root-finding at every step or restriction
to integer~$n$, with the former obscuring the parameter dependence and
the latter contradicting the empirical observation that fitted Hill
coefficients are generically non-integer.

The two-gene analysis extends to cyclic $N$-gene logistic loops of
arbitrary length. Theorem~\ref{thm:n_gene_char_eq} shows that the
characteristic equation of any cyclic negative-feedback loop with $N$
genes and individual delays $\tau_1,\ldots,\tau_N$ collapses to the
single binomial $\prod_i(\mu+\gamma_i)+\Lambda\,e^{-\mu\tau}=0$, with
$\tau=\sum_i\tau_i$ and $\Lambda=\prod_i A_i$, regardless of the
distribution of repressor and activator links along the cycle.
Theorem~\ref{thm:n_gene_hopf} characterises the Hopf frequency
$\omega_c$ as the unique positive root of the polynomial
$\prod_i(\omega^2+\gamma_i^2)=\Lambda^2$ and gives a closed-form
$\arctan$-sum formula for the critical delay $\tau_c$, and
Theorem~\ref{thm:n_gene_transversality} completes the picture with a
closed-form, manifestly positive transversality rate
$\mathrm{Re}(d\mu/d\tau)=S_1/(S_1^2+\omega_c^{-2}(S_2+\tau_c)^2)$
valid at every critical delay and every $N$, so the cyclic-loop Hopf
bifurcation is a complete theorem rather than an extrapolation from the
two-gene case. The loop-gain criterion $\Lambda>\prod_i\gamma_i$ is
necessary for delay-induced oscillation in all $N$, and is also
sufficient for $N=2$ where the no-delay trace $-\sum_i\gamma_i$ is
automatically negative; for $N\geq 3$ a separate no-delay stability
check is required, which in the symmetric case
$\gamma_i\equiv\gamma$ reduces to the explicit closed-form window
$\gamma^N<\Lambda<\gamma^N\sec^N(\pi/N)$
(Proposition~\ref{prop:symmetric_window}) --- non-empty for every
$N$ --- inside which the delay genuinely \emph{induces} the
oscillation rather than merely modifying an already-unstable
no-delay regime.

The dimensionless results above translate cleanly into the timescales
of real gene-regulatory oscillators once a time unit is chosen. The
dimensionless model has two intrinsic timescales, $1/\gamma_i$, that
cannot both be matched exactly to two independent biological half-lives;
we therefore choose the time unit by matching the predicted limit-cycle
period $T(\tau)$ to the observed oscillation period, treating the
$\gamma_i$ ratio as a tunable feature of the model rather than a
biological constraint. Three biological calibrations are illustrative.
\begin{itemize}
  \item \emph{Hes1 ultradian oscillator.} The observed Hes1 oscillation
        period is $\sim 2$--$3$~hours~\cite{hirata2002oscillatory}, while
        Hes1 protein and \emph{hes1} mRNA have measured half-lives of
        $\sim 22$ and $\sim 25$~minutes
        respectively~\cite{hirata2002oscillatory,monk2003oscillatory}.
        Calibrating against the period (rather than the half-lives) by
        identifying our predicted $T(\tau\!\approx\!0.5)\approx 5.3$~time
        units with $\sim 2.5$~hours gives a time unit of $\sim 28$~min;
        in the same units the transcriptional delay $\tau\sim 0.5$ is
        well above $\tau_c\approx 0.13$, consistent with the
        delay-driven oscillatory regime.
  \item \emph{p53--Mdm2 oscillator.} The p53--Mdm2 circuit is a
        two-component delayed negative-feedback loop of exactly the
        form~\eqref{eq:dde}. As shown in the dedicated validation of
        Section~\ref{sec:bio_validation}, taking the degradation rate
        directly from the measured $\sim1$~h protein half-life and the
        loop gain from the measured $\sim1$~h transcriptional delay, the
        closed-form Hopf period reproduces the experimentally observed
        $\sim5.5$~h oscillation~\cite{gevazatorsky2006oscillations} to
        within $3\%$ --- without tuning any parameter to the period.
        Independent modelling of the same circuit reports a
        delay-induced \emph{supercritical} Hopf bifurcation as the
        transcriptional delay is increased~\cite{levbaror2000generation},
        in agreement with the criticality established analytically in
        Theorems~\ref{thm:supercritical}--\ref{thm:general_lyapunov}.
  \item \emph{Drosophila circadian clock.} The $\sim 24$\,h period
        compared with $T\approx 4.8$ at $\tau\approx 0.40$ (read off
        Figure~\ref{fig:period}) gives a time unit of $\sim 5$~h,
        consistent with the $\sim 5$~h protein half-lives reported for
        PER and TIM~\cite{hardin1990feedback}. The actual
        \emph{Drosophila} circadian network involves additional
        positive-feedback loops (PER--TIM dimerisation, CLK--CYC
        autoactivation), so a pure negative-feedback two-gene model
        with constant delays is only a rough
        approximation~\cite{smolen2002reduced}.
\end{itemize}
The qualitative agreement across three orders of magnitude in absolute
period (minutes for Hes1, hours for p53, days for circadian) is the
expected one for a model whose only intrinsic timescales are the
delay $\tau$ and the relaxation rates $\gamma_i$: the dimensionless
analytical results carry over directly to all such systems, and the
predictions of $\tau_c$, $T_c$, and the deep-relaxation asymptote
$C_\infty$ apply with the appropriate change of units.

The framework set out here closes the analytical gap between the
delay-free logistic oscillator and the persistent rhythms of real
biological clocks, and it provides a direct route to the analysis of
larger Boolean-derived networks of the kind studied in the companion
paper. Future work will extend the closed-form analysis from the cyclic
single-loop case treated here to multi-loop De~Morgan
networks~\cite{belgacem2026logistic}, where several feedback cycles
compete. The Lindstedt--Poincar\'e
reduction is here carried to a closed-form first Lyapunov coefficient
for both the symmetric-threshold loop
(Theorem~\ref{thm:supercritical}) and the general asymmetric loop
(Theorem~\ref{thm:general_lyapunov}); a remaining theoretical question
is whether the criticality coefficient $\mathcal{T}$ can be \emph{proved}
positive throughout the strong-feedback regime, as the Monte-Carlo
sweep of Remark~\ref{rem:general_verification} indicates but does not
establish. Further directions include incorporating distributed delays
via Beretta--Kuang geometric criteria; applying the framework to
specific biological systems including the Hes1 ultradian oscillator,
the segmentation clock, and the cell-cycle transcriptional oscillator;
and combining the present open-loop bifurcation diagram with the
qualitative-control techniques of~\cite{belgacem2021control,chambon2020qualitative},
where feedback design was used to suppress or sustain oscillations in
delay-free negative-feedback genetic networks under uncertain
measurements. Connecting the loop-gain criterion $AB>\gamma_1\gamma_2$
and the critical-delay formula~\eqref{eq:tau_c} to such controller
designs would yield closed-form delay-tolerant stabilisation
conditions for the same class of biological loops. Comparison with the
piecewise-affine Glass-ring
limit~\cite{belgacem2025computer,farcot2019chaos} is a separate
direction: the present logistic results converge to the Glass-ring
behaviour as $\lambda\to\infty$ in the relaxation regime, and a
matched asymptotic analysis between the two regimes is a natural
follow-up. The closed-form amplitude prefactor $c\approx 2.37$, the
deep-relaxation period offset $C_\infty\approx 8.92$, the critical
steepness $\lambda_c\approx 0.426$, the sum-of-delays symmetry
$\tau_1+\tau_2$, the closed-form eigenvector amplitude
$|q_1|^{2}=B^{2}/(\omega_c^{2}+\gamma_1^{2})\approx 0.691$, and the
closed-form criticality coefficient $\mathcal{T}\approx0.696$ for the canonical
asymmetric loop together provide a quantitative testbed against which
any subsequent analytical~\cite{hassard1981theory} or
symbolic-numerical (DDE-BIFTOOL, homotopy continuation) treatment can
be benchmarked.

%%%%%%%%%%%%%%%%%%%%%%%%%%%%%%%%%%%%%%%%%%%%%%%%%%%%%%%%%%%%%%%%%%%%%%%%%%%%%%%%
%%%%%%%%%%%%%%%%%%%%%%%%%%%%%%%%%%%%%%%%%%%%%%%%%%%%%%%%%%%%%%%%%%%%%%%%%%%%%%%%

%%%%%%%%%%%%%%%%%%%%%%%%%%%%%%%%%%%%%%%%%%%%%%%%%%%%%%%%%%%%%%%%%%%%%%%%%%%%%%%%
\section*{Declarations}
\bmhead{Data Availability}
The R scripts used for performing the figures are
available from the corresponding author on reasonable request. All
figure simulations were performed in R using the
\texttt{deSolve} package~\cite{soetaert2010solving}; with parameter
values stated in Section~\ref{sec:numerical}. Initial histories were
constant and equal to a small displacement
$(x_1^*+0.05,\,x_2^*-0.05)$ from the equilibrium computed analytically
in Section~\ref{sec:formulation}.

\bmhead{Funding}
The author declares that no funds, grants, or other support were
received during the preparation of this manuscript.

\bmhead{Competing Interests}
The author has no relevant financial or non-financial interests to
disclose.

\bmhead{Ethics Approval}
Not applicable. This work involves no human or animal subjects.

\bmhead{Author Contributions}
I.B.\ designed the study, performed the analytical work and numerical
experiments, and wrote the manuscript.

\bmhead{Acknowledgments}

Not applicable.

\end{document}